\documentclass{bau}
\usepackage{amssymb}
\usepackage{amsmath}
\usepackage{enumerate}
\usepackage{tikz}
\usepackage[colorlinks=true,linkcolor=magenta,citecolor=magenta]{hyperref}
\theoremstyle{plain}
\newtheorem{theorem}{Theorem}[section]
\newtheorem{lemma}[theorem]{Lemma}
\newtheorem{proposition}[theorem]{Proposition}
\newtheorem{corollary}[theorem]{Corollary}
\newtheorem{alemma}{Lemma}

\theoremstyle{definition}
\newtheorem{example}[theorem]{Example}
\theoremstyle{remark}
\newtheorem*{claim}{Claim}

\begin{document}

\title[Lattices with complementation]{Varieties and quasivarieties of lattices\\ with complementation}
\author[V. Cenker]{V\'aclav Cenker} 
\address{Palack\'y University Olomouc, 
Faculty of Science,
Department of Algebra and Geometry,
17.\ listopadu 12,
771 46 Olomouc,
Czechia}
\email{vaclav.cenker01@upol.cz}
\author[I. Chajda]{Ivan Chajda} 
\address{Palack\'y University Olomouc, 
Faculty of Science,
Department of Algebra and Geometry,
17.\ listopadu 12,
771 46 Olomouc,
Czechia}
\email{ivan.chajda@upol.cz}
\author[J. K\"uhr]{Jan K\"uhr}
\address{Palack\'y University Olomouc, 
Faculty of Science,
Department of Algebra and Geometry,
17.\ listopadu 12,
771 46 Olomouc,
Czechia}
\email{jan.kuhr@upol.cz}
\author[H. L\"anger]{Helmut L\"anger}
\address{TU Wien,
Faculty of Mathematics and Geoinformation,
Institute of Discrete Mathematics and Geometry,
Wiedner Hauptstra\ss e 8--10,
1040 Vienna,
Austria, 
and
Palack\'y University Olomouc, 
Faculty of Science,
Department of Algebra and Geometry,
17.\ listopadu 12,
771 46 Olomouc,
Czechia}
\email{helmut.laenger@tuwien.ac.at}

\begin{abstract}
We investigate (quasi)varieties of lattices with complementation, i.e., complemented lattices equipped with a fixed complementation as a unary operation. 
We focus on subclasses satisfying additional conditions, such as the quasi-identity
$({x'\wedge y\approx 0} \;\&\; {x\wedge y'\approx 0})$ $\Rightarrow x\approx y$,
modularity, or De Morgan's laws.
We present a construction resembling a semidirect product that yields infinitely many finite subdirectly irreducible modular lattices with complementation satisfying this quasi-identity. We axiomatize small varieties, each of which covers the variety of Boolean algebras, generated by certain small modular lattices with De~Morgan complementation.
\end{abstract}

\subjclass{06C15, 06C20, 06B20, 08B20, 08B26}

\keywords{Complemented lattice, modular lattice, complementation, De~Morgan's laws, symmetric difference, quasivariety, variety, subdirectly irreducible algebra, free algebra}

\maketitle

\section{Introduction}
\label{SEC1}

By a \emph{lattice with complementation} we mean an algebra $\mathbf{L}=(L,\vee,\wedge,{}',0,1)$ of type $(2,2,1,0,0)$ such that $(L,\vee,\wedge,0,1)$ is a bounded lattice and, for every $a\in L$, $a'$ is a complement of $a$. 
Loosely speaking, this is simply a complemented lattice equipped with a designated complementation as a unary operation.

In this paper, we focus on lattices with complementation satisfying certain additional conditions that significantly affect the interaction between the lattice structure and the chosen complementation. This can already be illustrated by the smallest nondistributive complemented lattices (see Figure~\ref{F1}).
The diamond admits two complementations (up to isomorphism), namely $a\mapsto b\mapsto c\mapsto a$ and $a\mapsto b\mapsto c\mapsto b$.
In the former case, the quasi-identity
\[
\tag{W}\label{W}
(x'\wedge y\approx 0 \;\mathrel{\&}\; x\wedge y'\approx 0)
\;\Rightarrow\; x\approx y
\]
as well as De Morgan's laws
\[
\tag{DM}\label{DM}
(x\vee y)'\approx x'\wedge y'
\quad\text{and}\quad
(x\wedge y)'\approx x'\vee y'
\]
are satisfied, whereas in the latter case neither \eqref{W} nor \eqref{DM} holds
(indeed, $a'\wedge c=b\wedge c=0$ and $a\wedge c'=a\wedge b=0$ witness the failure of \eqref{W}, while $(a\vee c)'=0$ and $a'\wedge c'=b$ witness the failure of De Morgan's laws).
The pentagon also admits two complementations, namely
$a\mapsto c\mapsto b\mapsto c$ and
$b\mapsto c\mapsto a\mapsto c$.
In both cases, \eqref{W} fails (as $a'\wedge b=c\wedge b=0$ and $a\wedge b'=a\wedge c=0$), but \eqref{DM} is satisfied.

\begin{figure}[ht]
\centering
\begin{tikzpicture}[scale=0.8,radius=2pt,font=\small]
\filldraw (1,0) circle node [below] {$0$} coordinate (0);
\filldraw (0,1) circle node [left] {$a$} coordinate (a);
\filldraw (1,1) circle node [right] {$b$} coordinate (b);
\filldraw (2,1) circle node [right] {$c$} coordinate (c);
\filldraw (1,2) circle node [above] {$1$} coordinate (1);
\draw (0) -- (a) -- (1) -- (c) -- (0) -- (1);
\end{tikzpicture}
\qquad
\begin{tikzpicture}[scale=0.8,radius=2pt] 
\filldraw (1,0) circle node [below] {$0$} coordinate (0);
\filldraw (0.2,0.6) circle node [left] {$a$} coordinate (a);
\filldraw (0.2,1.4) circle node [left] {$b$} coordinate (b);
\filldraw (1.8,1) circle node [right] {$c$} coordinate (c);
\filldraw (1,2) circle node [above] {$1$} coordinate (1);
\draw (0) -- (a) -- (b) -- (1) -- (c) -- (0);
\end{tikzpicture}
\caption{The diamond and the pentagon}\label{F1}
\end{figure}

Thus, we study lattices with complementation that satisfy the quasi-identity \eqref{W}, are modular, or satisfy De Morgan's laws \eqref{DM}.
Accordingly, we use $\mathcal{W}$ and $\mathcal{M}$ to denote, respectively, the class of lattices with complementation satisfying \eqref{W} and the class of modular lattices with complementation.
Moreover, given a class $\mathcal{K}$ of lattices with complementation, we let $\mathcal{K}_{DM}$ denote the subclass consisting of those members of $\mathcal{K}$ which satisfy De Morgan's laws \eqref{DM}.
In the case of lattices with complementation satisfying \eqref{DM}, we often speak of \emph{lattices with De Morgan complementation} (note, however, that such a complementation need not be involutive).

The paper is organized as follows.
In Section~\ref{SEC2}, we collect several basic examples and establish first properties of lattices with complementation that satisfy some of the above-mentioned conditions. The most prominent examples are the lattices with complementation $\mathbf{M}_n'$ (Example~\ref{EXA1}).
In Section~\ref{SEC3}, we present a construction that roughly resembles a semidirect product of a Boolean algebra with a complemented lattice (Proposition~\ref{L:V1}), and as a consequence we show that there are infinitely many finite subdirectly irreducible algebras within the class $\mathcal{W}$ (Corollary~\ref{C:34}). 
In Section~\ref{SEC4}, we investigate members of the classes $\mathcal{W}_{DM}$ and $\mathcal{M}_{DM}$ in connection with the so-called (dual) weak orthomodularity, and we characterize their neutral elements in terms of complementation.
In Section~\ref{SEC5}, we describe some congruence properties of the varieties $\mathcal{M}$ and $\mathcal{M}_{DM}$ that will be needed in Section~\ref{SEC6}, where we axiomatize the varieties generated by the lattices with complementation $\mathbf{M}_n'$ (Theorems~\ref{T:M3axio} and \ref{T:Mnaxio}) and describe the free algebras in these varieties (Theorem~\ref{T:FREE}). We also show that $\mathcal{M}_{DM}$ has uncountably many subvarieties (Theorem~\ref{T:unc}).

\section{Examples and first properties}
\label{SEC2}

As usual, for any cardinal $\kappa\ge 2$, we use $\mathbf{M}_\kappa$ to denote the unique bounded lattice of length $2$ and width $\kappa$; thus the set
$M_\kappa\setminus\{0,1\}=\{a_\lambda\mid \lambda<\kappa\}$
forms an antichain of size (width) $\kappa$.
We do not exclude the case $\kappa=2$, although the Boolean lattice $\mathbf{M}_2$ is of limited interest in the present work.

We have already mentioned that our guiding example is the lattice $\mathbf{M}_3$ equipped with the complementation that is defined by cyclically permuting the elements $a_0,a_1,a_2$ (denoted $a,b,c$ in Figure~\ref{F1} of Section~\ref{SEC1}). 
We denote this lattice with complementation by $\mathbf{M}_3'$.
More formally, and more generally:

\begin{figure}[ht]
\centering
\begin{tikzpicture}[scale=0.8,radius=2pt,font=\small]
\filldraw (2,0) circle node [below] {$0$} coordinate (0);
\filldraw (2,2) circle node [above] {$1$} coordinate (1);
\filldraw (0,1) circle node [left] {$a_0$} coordinate (A);
\filldraw (1,1) circle node [left] {$a_1$} coordinate (B);
\filldraw (4,1) circle node [right] {$a_{n-1}$} coordinate (C);
\node at (2.75,1) {$\cdots$};
\draw (0) -- (A) -- (1) -- (B) -- (0) -- (C) -- (1);
\draw (1) -- (2.5,1.5);
\draw (0) -- (2.5,0.5);
\end{tikzpicture}
\caption{The lattice $\mathbf{M}_n$ for $n\ge 2$}\label{F2}
\end{figure}

\begin{example}\label{EXA1}
For any integer $n\ge 2$, we use $\mathbf{M}_n'$ to denote the lattice $\mathbf{M}_n$ (see Figure~\ref{F2}) equipped with the complementation defined by the cycle $a_0\mapsto a_1\mapsto\dots\mapsto a_{n-1}\mapsto a_0$ on $M_n\setminus\{0,1\}=\{a_0,\dots,a_{n-1}\}$, which is usually written as $(a_0\;a_1\;\dots\;a_{n-1})$.
Thus, $\mathbf{M}_n'=(M_n,\vee,\wedge,{}',0,1)$, where 
\[
a_i':=a_{i+1}
\]
for every $i\in\{0,\dots,n-1\}$, with $i+1$ taken modulo $n$.
Note that $\mathbf{M}_n'$ satisfies De~Morgan's laws \eqref{DM}, because $a_i'=a_{i+1}\ne a_{j+1}=a_j'$ whenever $a_i\ne a_j$.

The concrete choice of the cycle is not essential, because all cycles of length $n$ yield the same (up to isomorphism) lattice with complementation (cf.\ Lemma~\ref{lem1}).
So, for example, we could have alternatively defined $\mathbf{M}_3'$ by means of the cycle $(a_2\;a_1\;a_0)$.

The Boolean algebra $\mathbf{M}_2'\cong\mathbf{2}^2$ certainly belongs to the quasivariety $\mathcal{W}$, and so does $\mathbf{M}_3'$, as can be easily verified. 
However, no $\mathbf{M}_n'$ with $n\ge 4$ belongs to $\mathcal{W}$, because $a_0'\wedge a_2=a_1\wedge a_2=0$ and $a_0\wedge a_2'=a_0\wedge a_3=0$, yet $a_0\ne a_2$.
We will see in Proposition \ref{L:H2} that no $\mathbf{M}_\kappa$ with (finite or infinite) $\kappa\ge 4$ admits a complementation making it a member of $\mathcal{W}$.

For $n\ge 4$, there are other (injective) De Morgan complementations on $\mathbf{M}_n$. Each such complementation is determined by a product of several pairwise disjoint cycles of length $\ge 2$ on $\{a_0,\dots,a_{n-1}\}$, say $\sigma_1\circ\dots\circ\sigma_r$. Then $\mathbf{M}_n$ with this complementation is isomorphic (as a lattice with complementation) to the horizontal sum of $\mathbf{M}_{p_1}',\dots,\mathbf{M}_{p_r}'$, where $p_i$ is the length of the respective cycle $\sigma_i$. Therefore, it is natural to focus on complementations defined by single cycles.
\end{example}

It is worth noticing that, for any pair of integers $n,p\ge 2$, the lattice $\mathbf{M}_p$ satisfies J\'onsson's identity
\[
\tag{$\mathrm{M}_n$}\label{MN1}
x_0\wedge\bigwedge_{1\le i<j\le n} (x_i\vee x_j) \le 
\bigvee_{1\le i\le n} (x_0\wedge x_i)
\]
if and only if $p\le n$ (see \cite{Jo}). In particular, only the lattices $\mathbf{M}_2$ and $\mathbf{M}_3$ satisfy the identity
\[
\tag{$\mathrm{M}_3$}\label{M31}
u\wedge (x\vee y)\wedge (x\vee z)\wedge (y\vee z)
\le (u\wedge x)\vee (u\wedge y)\vee (u\wedge z).
\]
In Section~\ref{SEC6}, these identities will be employed in the axiomatization of the varieties generated by the lattices with complementation $\mathbf{M}_n'$ (for $n\ge 3$).

We should point out that the quasivariety $\mathcal{W}$ is not closed under horizontal sums, i.e., the quasi-identity \eqref{W} is not preserved by forming horizontal sums, because when $a\neq b$ (both distinct from $0$ and $1$) are elements from two distinct constituent lattices, then $a'\wedge b=0$ and $a\wedge b'=0$.

Now, we describe all members of $\mathcal{W}$ of size $\le 16$.
In addition to the small Boolean algebras $\mathbf{2}^k$ with $k\le 4$, we have already encountered $\mathbf{M}_3'$, which is the only member of $\mathcal{W}$ of size $5$ (see Example~\ref{EXA1} and the introduction).
Then there are two members of size $10$ and other two of size $16$, as the following two examples show.

\begin{figure}[ht]
\centering
\begin{tikzpicture}[scale=0.8,radius=2pt,font=\small]
\filldraw (1,0) circle node [below] {$0$} coordinate (0);
\filldraw (0,1) circle node [left] {$a$} coordinate (a);
\filldraw (1,1) circle node [left] {$b$} coordinate (b);
\filldraw (2,1) circle node [left=2pt] {$c$} coordinate (c);
\filldraw (1,2) circle node [above] {$d$} coordinate (d);
\filldraw (3.5,1) circle node [below] {$e$} coordinate (e);
\filldraw (2.5,2) circle node [right=2pt] {$f$} coordinate (f);
\filldraw (3.5,2) circle node [right] {$g$} coordinate (g);
\filldraw (4.5,2) circle node [right] {$h$} coordinate (h);
\filldraw (3.5,3) circle node [above] {$1$} coordinate (1);
\draw (0) -- (b) -- (d) -- (a) -- (0) -- (c) -- (d);
\draw (e) -- (g) -- (1) -- (f) -- (e) -- (h) -- (1);
\draw (0) -- (e);
\draw (a) -- (f); 
\draw (b) -- (g);
\draw (c) -- (h);
\draw (d) -- (1);
\end{tikzpicture}
\caption{The lattice from Example \ref{EXA2}}\label{F3}
\end{figure}

\begin{example}[cf.\ \cite{CL18}, Examples 2.3 and 3.4]
\label{EXA2}
Consider the lattice in Figure~\ref{F3}.
Up to isomorphism, there are two complementations that turn this lattice into a lattice with complementation belonging to $\mathcal{W}$. We will elaborate on this in Example \ref{EXA4}, and more generally in Section \ref{SEC3}, where we will describe complementations on direct products of complemented lattices with Boolean lattices.

The first possibility is given as follows:
\begin{center}
\begin{tabular}{c|cccccccc}
$x$ & $a$ & $b$ & $c$ & $d$ & $e$ & $f$ & $g$ & $h$ \\
\hline
$x'$ & $g$ & $h$ & $f$ & $e$ & $d$ & $b$ & $c$ & $a$
\end{tabular}
\end{center}
It is easy to see that the lattice with complementation so defined, $\mathbf{H}_1$ say, is isomorphic to the direct product $\mathbf{M}_3'\times\mathbf{2}$ under 
$0\mapsto (0,0)$, $a\mapsto (a_0,0)$, $b\mapsto (a_1,0)$, $c\mapsto (a_2,0)$, $d\mapsto (1,0)$, $e\mapsto (0,1)$, $f\mapsto (a_0,1)$, $g\mapsto (a_1,1)$, $h\mapsto (a_2,1)$, and $1\mapsto (1,1)$.
Hence $\mathbf{H}_1$ satisfies De Morgan's laws \eqref{DM}.

The other possibility is involutive:
\begin{center}
\begin{tabular}{c|cccccccc}
$x$ & $a$ & $b$ & $c$ & $d$ & $e$ & $f$ & $g$ & $h$ \\
\hline
$x'$ & $g$ & $h$ & $f$ & $e$ & $d$ & $c$ & $a$ & $b$
\end{tabular}
\end{center}
In contrast to $\mathbf{H}_1$, the lattice with complementation so defined, $\mathbf{H}_2$ say, does not satisfy De Morgan's laws (e.g., $a\le f$ but $a'=g\ngeq c=f'$).
However, it should be noted that $\mathbf{H}_2$ is subdirectly irreducible, since its only nontrivial congruence is 
\[
\theta_0=\{0,a,b,c,d\}^2\cup\{e,f,g,h,1\}^2.
\]
Up to isomorphism, $\mathbf{H}_1$ and $\mathbf{H}_2$ are the only members of $\mathcal{W}$ of size $10$.
\end{example}

\begin{figure}[ht]
\centering
\begin{tikzpicture}[scale=1,radius=1.6pt,font=\small]
\filldraw (0,0) circle node [below] {$0$} coordinate (0);
\filldraw (0,3.5) circle node [above] {$1$} coordinate (1);
\filldraw (-3,1) circle node [left=2pt] {$a$} coordinate (a);
\filldraw (-2,1) circle node [left=2pt] {$b$} coordinate (b);
\filldraw (-1,1) circle node [left=2pt] {$c$} coordinate (c);
\filldraw (0,1) circle node [right=2pt] {$d$} coordinate (d);
\filldraw (1,1) circle node [right=2pt] {$e$} coordinate (e);
\filldraw (2,1) circle node [right=2pt] {$f$} coordinate (f);
\filldraw (3,1) circle node [right=2pt] {$g$} coordinate (g);
\filldraw (-3,2.5) circle node [left=2pt] {$h$} coordinate (h);
\filldraw (-2,2.5) circle node [left=2pt] {$i$} coordinate (i);
\filldraw (-1,2.5) circle node [left=2pt] {$j$} coordinate (j);
\filldraw (0,2.5) circle node [right=2pt] {$k$} coordinate (k);
\filldraw (1,2.5) circle node [right=2pt] {$l$} coordinate (l);
\filldraw (2,2.5) circle node [right=2pt] {$m$} coordinate (m);
\filldraw (3,2.5) circle node [right=2pt] {$n$} coordinate (n);
\draw (0) -- (a) -- (h) -- (1) -- (n) -- (g) -- (0) -- (1);
\draw (0) -- (b) -- (h) -- (d) -- (n) -- (f) -- (0);
\draw (0) -- (c) -- (i) -- (1) -- (m) -- (e) -- (0);
\draw (c) -- (k) -- (e);
\draw (a) -- (j) -- (e);
\draw (a) -- (l) -- (g);
\draw (b) -- (i) -- (f);
\draw (b) -- (m) -- (g);
\draw (1) -- (j) -- (f);
\draw (1) -- (l) -- (c);
\end{tikzpicture}
\caption{The lattice from Example \ref{EXA3}}\label{F4}
\end{figure}

\begin{example}\label{EXA3}
Consider the lattice in Figure~\ref{F4}.
This lattice is modular and simple; in fact, it is the subspace lattice of $V(\mathbb{Z}_2,3)$, the $3$-dimensional vector space over the field $\mathbb{Z}_2$, where
$a=\langle(1,0,0)\rangle$,
$b=\langle(1,1,0)\rangle$,
$c=\langle(1,0,1)\rangle$,
$d=\langle(0,1,0)\rangle$,
$e=\langle(1,1,1)\rangle$,
$f=\langle(0,1,1)\rangle$,
and
$g=\langle(0,0,1)\rangle$.
Up to isomorphism, there are two complementations making this lattice into a lattice with complementation that belongs to $\mathcal{W}$.
The first is involutive and is given by
\begin{center}
\begin{tabular}{c|cccccccccccccc}
$x$ & $a$ & $b$ & $c$ & $d$ & $e$ & $f$ & $g$ & $h$ & $i$ & $j$ & $k$ & $l$ & $m$ & $n$ \\
\hline
$x'$ & $k$ & $j$ & $m$ & $i$ & $n$ & $l$ & $h$ & $g$ & $d$ & $b$ & $a$ & $f$ & $c$ & $e$
\end{tabular}
\end{center}
The second is given by
\begin{center}
\begin{tabular}{c|cccccccccccccc}
$x$ & $a$ & $b$ & $c$ & $d$ & $e$ & $f$ & $g$ & $h$ & $i$ & $j$ & $k$ & $l$ & $m$ & $n$ \\
\hline
$x'$ & $m$ & $n$ & $h$ & $j$ & $i$ & $l$ & $k$ & $e$ & $a$ & $g$ & $f$ & $d$ & $c$ & $b$
\end{tabular}
\end{center}
In both cases, De Morgan's laws \eqref{DM} fail (e.g., $c\le k$ but $m\ngeq a$ and $h\ngeq f$).
These two lattices with complementation together with the Boolean algebra $\mathbf{2}^4$ are, up to isomorphism, the only members of $\mathcal{W}$ of size $16$.

As an aside, there exists another involutive complementation on the above lattice for which, however, the resulting lattice with complementation does not belong to $\mathcal{W}$ (cf.\ \cite[Example 22]{CL19}):
\begin{center}
\begin{tabular}{c|cccccccccccccc}
$x$ & $a$ & $b$ & $c$ & $d$ & $e$ & $f$ & $g$ & $h$ & $i$ & $j$ & $k$ & $l$ & $m$ & $n$ \\
\hline
$x'$ & $n$ & $k$ & $j$ & $l$ & $i$ & $m$ & $h$ & $g$ & $e$ & $c$ & $b$ & $d$ & $f$ & $a$
\end{tabular}
\end{center}
The failure of quasi-identity \eqref{W} is witnessed, e.g., by $g'\wedge c=h\wedge c=0$ and $g\wedge c'=g\wedge j=0$.
\end{example}

In lattices with complementation, we can define the so-called \emph{symmetric difference} $+$ as in Boolean algebras by
\[
x+y:=(x'\wedge y)\vee (x\wedge y').
\]
It is obvious that the identities 
\[
x+x\approx 0,\quad x+y\approx y+x,\quad x+0\approx x \quad\text{and}\quad x+1\approx x'
\]
are satisfied. However, the quasi-identity 
\[
\tag{$\mathrm{W}^+$}\label{equ1}
x+y\approx 0 \;\Rightarrow\; x\approx y,
\]
which is equivalent to \eqref{W} and certainly holds in Boolean algebras, is not satisfied by all lattices with complementation. For example, \eqref{equ1} fails in $\mathbf{M}_n'$ with $n\ge 4$ (see Example~\ref{EXA1} and Proposition~\ref{L:H2} below); it holds only in $\mathbf{M}_3'$, where the symmetric difference is given as follows:

\begin{center}
\begin{tabular}{c|ccccc}
$+$ & $0$ & $a_0$ & $a_1$ & $a_2$ & $1$ \\
\hline
$0$ & $0$ & $a_0$ & $a_1$ & $a_2$ & $1$ \\
$a_0$ & $a_0$ & $0$ & $a_1$ & $a_0$ & $a_1$ \\
$a_1$ & $a_1$ & $a_1$ & $0$ & $a_2$ & $a_2$ \\
$a_2$ & $a_2$ & $a_0$ & $a_2$ & $0$ & $a_0$ \\
$1$ & $1$ & $a_1$ & $a_2$ & $a_0$ & $0$
\end{tabular}
\end{center}

The first effect of \eqref{W} or \eqref{equ1} is nearly trivial: it forces complementation to be injective.

\begin{lemma}\label{L:H1}
If $\mathbf{L}\in\mathcal{W}$, then the complementation is injective.
\end{lemma}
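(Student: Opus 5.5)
We argue directly from the quasi-identity \eqref{W}. Take $a,b\in L$ with $a'=b'$; we must show $a=b$. The plan is to verify the two hypotheses of \eqref{W} for the pair $x:=a$, $y:=b$, namely $a'\wedge b=0$ and $a\wedge b'=0$, and then conclude $a=b$.

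Both hypotheses come from the fact that $x'$ is a complement of $x$, so that $x\wedge x'\approx 0$ holds in every lattice with complementation. Substituting $b'$ for $a'$ gives $a'\wedge b=b'\wedge b=0$. Similarly, $a\wedge b'=a\wedge a'=0$. So both premises of \eqref{W} hold, and \eqref{W} yields $a=b$. Hence $x\mapsto x'$ is injective.

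Equivalently, in terms of \eqref{equ1}: if $a'=b'$, then $a+b=(a'\wedge b)\vee(a\wedge b')=(b'\wedge b)\vee(a\wedge a')=0$, so $a=b$.

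I expect no real obstacle. The only point to watch is using the meet part of complementation, $x\wedge x'=0$, and not the join part; the join condition is not needed.
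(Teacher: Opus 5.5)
Your proof is correct and matches the paper's: it substitutes $a'=b'$ into $a+b=(a'\wedge b)\vee(a\wedge b')$ to get $(b'\wedge b)\vee(a\wedge a')=0$ and concludes $a=b$ by \eqref{equ1}. That is exactly your second formulation, and your first version is the same computation stated directly in terms of \eqref{W}.
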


\begin{proof}
Let $\mathbf{L}\in\mathcal{W}$. For any
$a,b\in L$, if $a'=b'$, then $a+b=(a'\wedge b)\vee(a\wedge b')=(b'\wedge b)\vee(a\wedge a')=0$, which implies $a=b$ by \eqref{equ1}.
\end{proof}

As mentioned in Example~\ref{EXA1}, we now show that among the lattices $\mathbf{M}_\kappa$ (with possibly infinite $\kappa$), only $\mathbf{M}_2$ and $\mathbf{M}_3$ can be equipped with a complementation satisfying \eqref{W}, or equivalently \eqref{equ1}:

\begin{proposition}\label{L:H2}
If $\kappa\ge 4$, then the lattice $\mathbf{M}_\kappa$ does not admit any complementation such that the resulting lattice with complementation belongs to $\mathcal{W}$.
\end{proposition}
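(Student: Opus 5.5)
Suppose, toward a contradiction, that $'$ is a complementation on $\mathbf{M}_\kappa$ with $\kappa\ge 4$ and that the resulting algebra belongs to $\mathcal{W}$. In $\mathbf{M}_\kappa$ the complements of $0$ and $1$ are forced: $0'=1$ and $1'=0$. For an atom $a_\lambda$, its complement is any atom different from $a_\lambda$. Consequently $'$ restricts to a map $f$ on the antichain $A=\{a_\lambda\mid\lambda<\kappa\}$ with no fixed points, and Lemma~\ref{L:H1} says $f$ is injective. The plan is to use only the following observation about \eqref{W}. For distinct atoms $x,y\in A$ we have $x'\wedge y=0$ unless $x'=y$, and $x\wedge y'=0$ unless $y'=x$. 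So \eqref{W} says exactly this:
\[
\text{for all distinct } x,y\in A:\quad f(x)=y \ \text{ or } \ f(y)=x .
\]

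Next, fix an atom $x$. Every other atom $y$ satisfies $f(x)=y$ or $f(y)=x$. The first alternative holds for only one $y$, namely $y=f(x)$. So every atom $y\notin\{x,f(x)\}$ satisfies $f(y)=x$. Since $\kappa\ge4$, the set $A\setminus\{x,f(x)\}$ contains at least two distinct atoms $y_1,y_2$. Then $f(y_1)=x=f(y_2)$, which contradicts injectivity. This already finishes the proof.

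Alternatively, one can avoid Lemma~\ref{L:H1} and give explicit witnesses. Take $y_1,y_2$ as above with $f(y_1)=f(y_2)=x$. Then $y_1'\wedge y_2=x\wedge y_2=0$ and $y_1\wedge y_2'=y_1\wedge x=0$, yet $y_1\ne y_2$, so \eqref{W} fails.

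I expect no real obstacle. The only care needed is the case distinction when computing meets of atoms, and the fact that the argument works equally for infinite $\kappa$, since it only needs four atoms.
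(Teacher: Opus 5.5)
Your proof is correct, but it is organized differently from the paper's. The paper fixes an atom $a$ and follows its forward orbit $a,a',a'',a'''$, splitting into three cases: $a''=a$; $a''\neq a$ but $a'''=a$; and $a'''\neq a$. In each case it exhibits an explicit pair of distinct atoms ($a,c$ or $a,d$) with vanishing symmetric difference. It invokes injectivity (Lemma~\ref{L:H1}) to guarantee inequalities such as $c'\neq a$. The hypothesis $\kappa\ge 4$ is needed only in the $3$-cycle case, to pick an atom outside $\{a,a',a''\}$.

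You instead first translate \eqref{W} on $\mathbf{M}_\kappa$ into the combinatorial condition ``for distinct atoms $x,y$, $x'=y$ or $y'=x$''. You then count the fibre over one atom: the $\kappa-2\ge 2$ atoms outside $\{x,x'\}$ must all be sent to $x$, contradicting injectivity. Your second version simply inlines the proof of Lemma~\ref{L:H1} with the witnesses $y_1,y_2$.

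Your route removes the case analysis and makes it transparent exactly where $\kappa\ge 4$ enters. As a bonus, the same reformulation shows at once that on $\mathbf{M}_3$ precisely the $3$-cycles satisfy \eqref{W}, which the paper states only as a separate remark (that \eqref{equ1} holds only in $\mathbf{M}_3'$). The paper's version, in turn, produces witnesses read off from the orbit structure of the complementation. This is in the same spirit as the cycle-based argument used later in Proposition~\ref{L:V1}(iii).
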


\begin{proof}
Suppose that $'$ is a complementation on $\mathbf{M}_\kappa$ such that the resulting lattice with complementation belongs to $\mathcal{W}$.
Let $A:=M_\kappa\setminus\{0,1\}$.
Clearly, $x'\in A\setminus\{x\}$ for all $x\in A$. Let $a\in A$ and put $b:=a'$. 

Case 1. $b'=a$.
Let $c\in A\setminus\{a,b\}$ and put $d:=c'$. Since the elements $a,b,c$ are pairwise distinct and $'$ is injective by Lemma~\ref{L:H1}, the elements $b,a,d$ are also pairwise distinct and 
$a+c=(a'\wedge c)\vee(a\wedge c')=(b\wedge c)\vee(a\wedge d)=0$, contradicting $a\ne c$.

Case 2. $b'\ne a$. 
Put $c:=b'$. Then $a\ne c\ne b$ and hence $a,b,c$ are pairwise distinct. 

Case 2a. $c'=a$. 
Let $d\in A\setminus\{a, b, c\}$ and put $e:=d'$. Since $a,b,c,d$ are pairwise distinct, the same is true for $b,c,a,e$ and we have $a+d=(a'\wedge d)\vee(a\wedge d')=(b\wedge d)\vee(a\wedge e)=0$, contradicting $a\ne d$. 

Case 2b. $c'\ne a$. 
Put $d:=c'$. Then $d\ne a$ and $a+c=(a'\wedge c)\vee(a\wedge c')=(b\wedge c)\vee(a\wedge d)=0$, again contradicting $a\ne c$.
\end{proof}

Somewhat related to \eqref{W} or \eqref{equ1} is the quasi-identity
\begin{equation}\label{equ2}
x\wedge y'\approx 0 \;\Rightarrow\; x\le y,
\end{equation}
which is, however, too strong:

\begin{lemma}
A lattice with complementation satisfies quasi-identity \eqref{equ2} if and only if it is a Boolean algebra.
\end{lemma}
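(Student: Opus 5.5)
The easy direction comes first. In a Boolean algebra with $'$ the Boolean complement, suppose $x\wedge y'=0$. Then $x=x\wedge(y\vee y')=(x\wedge y)\vee(x\wedge y')=x\wedge y$, so $x\le y$. This uses only distributivity.

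For the converse, assume $\mathbf{L}$ satisfies \eqref{equ2}. The first goal is to show that $'$ is antitone and involutive. Since $x\wedge x'=0$, applying \eqref{equ2} with $y:=x''$ (so that $y'=x'''$) does not help directly. A better start is the instance $x'\wedge x=0$ read as $x'\wedge (x'')'$, which is not obviously $0$ either. So I would go through the pair $a,a''$: we have $a''\wedge a'''=0$, hence $a''\le a''$, which is trivial. Instead, use $a\wedge a'=0$ with $y$ chosen so that $y'=a'$. A cleaner route is to derive antitonicity first. If $a\le b$, then $b'\wedge a\le b'\wedge b=0$, so \eqref{equ2} applied to $x:=b'$ and $y:=a'$ needs $b'\wedge a''=0$, which is circular. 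So I instead apply \eqref{equ2} as $x\wedge y'=0\Rightarrow x\le y$ with $x:=a$ and $y:=b$, which characterizes the order: $a\le b$ iff $a\wedge b'=0$ (the "if" is \eqref{equ2}, the "only if" holds since $a\wedge b'\le b\wedge b'=0$).

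The key step uses this characterization. For arbitrary $x$ and any complement $c$ of $x$, I want to show $c=x'$, which gives uniqueness of complements. From $c\wedge x=0$ and $x=(x')'$? That is unknown, so I route through \eqref{equ2} once more. We have $x''\wedge x'=0$, so $x''\le x$ by \eqref{equ2} (taking $y:=x$). Also $x\wedge x'=0$; to get $x\le x''$ I need $x\wedge x'''=0$. Applying the previous fact to $x'$ gives $x'''\le x'$, hence $x\wedge x'''\le x\wedge x'=0$, so $x\le x''$. Thus $x''=x$. For antitonicity, take $a\le b$. Then $b'\wedge a\le b'\wedge b=0$, i.e.\ $b'\wedge (a')'=0$ using $a''=a$, so $b'\le a'$ by \eqref{equ2}. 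Hence $'$ is an order-reversing involution, i.e.\ an orthocomplementation, and De Morgan's laws hold.

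Distributivity is the main obstacle. I would show first that every pair $a,b$ with $a\wedge b=0$ satisfies $a\le b'$: indeed $a\wedge (b')'=a\wedge b=0$, so \eqref{equ2} gives $a\le b'$. Hence disjoint elements are orthogonal, which is exactly the characterization of Boolean algebras among ortholattices. To finish, I would prove distributivity directly. Set $d:=(x\wedge y)\vee(x\wedge z)$. Then $x\wedge(y\vee z)\wedge d'$ is disjoint from $y$: its meet with $y$ lies below $x\wedge y\wedge d'\le d\wedge d'=0$. So it lies below $y'$, and similarly below $z'$, hence below $y'\wedge z'=(y\vee z)'$. Being also below $y\vee z$, it equals $0$. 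By \eqref{equ2}, $x\wedge(y\vee z)\le d$, which gives distributivity, so $\mathbf{L}$ is a Boolean algebra.
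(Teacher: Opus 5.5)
Your proof is correct. It matches the paper up through the orthocomplementation step and then reaches distributivity by a genuinely different route.

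\textbf{Shared part.} You get $a''\le a$ from $a''\wedge a'=0$, apply this to $a'$ to obtain $a\le a''$, and derive antitonicity from $b'\wedge a''=0$. This is exactly the paper's argument.

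\textbf{Where the routes diverge.} The paper next proves unique complementation. If $b$ is a complement of $a$, then $b\wedge a''=0$ gives $b\le a'$, and $a'\wedge b'=0$ gives $a'\le b$. It then cites the theorem that a uniquely complemented lattice satisfying De Morgan's laws is distributive. You prove distributivity directly instead:
\begin{itemize}
\item By \eqref{equ2} and involutivity, disjoint elements are orthogonal.
\item Hence $x\wedge(y\vee z)\wedge d'$ lies below $y'\wedge z'=(y\vee z)'$ and also below $y\vee z$, so it is $0$.
\item One more application of \eqref{equ2} yields $x\wedge(y\vee z)\le (x\wedge y)\vee(x\wedge z)$.
\end{itemize}

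\textbf{What each route buys.} Your argument is self-contained. It shows that \eqref{equ2} by itself forces distributivity in one line, with no appeal to the external result. That result is not trivial: its standard proof passes through orthomodularity (ruling out the benzene ring) and commutativity in orthomodular lattices. The paper's version is shorter on the page only because it outsources this step.

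\textbf{Tidying the write-up.} Delete the false starts, including the announced but never completed ``uniqueness of complements'' step and the ``circular'' attempts; your final argument does not use them. Also add the one-line remark that complements are unique in a distributive lattice, so $'$ really is the Boolean complementation.
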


\begin{proof}
Let $\mathbf{L}$ be a lattice with complementation.
It is obvious that $\mathbf{L}$ satisfies \eqref{equ2} when it is a Boolean algebra. Conversely, suppose that $\mathbf{L}$ satisfies \eqref{equ2}. Then, for every $a\in L$, $a''\wedge a'=0$ implies $a''\le a$, and any of the following statements implies the next one:
\[
a'''\wedge a''=0;\quad a'''\le a';\quad 
a\wedge a'''\le a\wedge a'=0;\quad a\le a''.
\]
This shows $a''=a$. Moreover, for every $a,b\in L$, any of the following statements implies the next one:
\[
a\le b;\quad a\wedge b'=0;\quad b'\wedge a''=0;\quad b'\le a'.
\]
Hence $'$ is an antitone involution, and so $\mathbf{L}$ is an ortholattice.
At this point, it suffices to show that $\mathbf{L}$ is uniquely complemented, since it is known that a uniquely complemented lattice satisfying De Morgan's laws is distributive (e.g.\ \cite[Chapter 6]{Bl}). 
To this end, suppose that $b\in L$ is a complement of $a\in L$. Then $b\wedge a''=b\wedge a=0$ yields $b\le a'$ by \eqref{equ2}, and $a\vee b=1$ is equivalent to $a'\wedge b'=0$, which yields $a'\le b$ also by \eqref{equ2}. Thus $b=a'$.
\end{proof}

\begin{lemma}
Let $\mathbf{L}$ be a lattice with complementation. 
Consider the following properties:
\begin{enumerate}[\rm(i)]
\item
$\mathbf{L}$ does not have a pentagon as a $\{0,1\}$-sublattice;
\item
$\mathbf{L}$ satisfies the quasi-identity
$(x\le y \;\mathrel{\&}\; x'\approx y') \;\Rightarrow\; x\approx y$;
\item
$\mathbf{L}$ does not have a subalgebra whose lattice reduct is a pentagon.
\end{enumerate}
Then {\rm(i)} $\Rightarrow$ {\rm(ii)} $\Rightarrow$ {\rm(iii)}.
If $\mathbf{L}\in\mathcal{M}$, then $\mathbf{L}$ satisfies {\rm(i)}. If $\mathbf{L}\in\mathcal{W}$, then $\mathbf{L}$ satisfies {\rm(ii)}. 
If $\mathbf{L}$ satisfies De Morgan's laws, then condition {\rm(ii)} is equivalent to the injectivity of the complementation. Hence, for $\mathbf{L}\in\mathcal{M}_{DM}$, the complementation is injective (cf.\ Lemma~\ref{lem3}).
\end{lemma}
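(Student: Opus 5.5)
The plan is to verify the implications one at a time, using only the defining property of a complement and, where available, Lemma~\ref{L:H1}.

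For (i) $\Rightarrow$ (ii), argue by contraposition. Suppose $a<b$ and $a'=b'=:c$. Since $c$ is a complement of both $a$ and $b$, we have $a\wedge c=b\wedge c=0$ and $a\vee c=b\vee c=1$. Then $a\neq 0$: otherwise $c=1$ (as $0\vee c=1$), whence $b=b\wedge c=0=a$. Dually $b\ne 1$. Also $c\notin\{0,1\}$, since $c=0$ would force $a=b=1$ and $c=1$ would force $a=b=0$. Moreover $c$ is incomparable with $a$ and $b$: for instance, $c\le b$ gives $c=b\wedge c=0$, and $a\le c$ gives $a=0$. Hence $\{0,a,b,c,1\}$ is a pentagon which is a $\{0,1\}$-sublattice, contradicting (i).

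For (ii) $\Rightarrow$ (iii), suppose a subalgebra has lattice reduct $\{0,x,y,z,1\}$ with $x<y$ and $z$ incomparable to both. In a pentagon, $x$ and $y$ both have $z$ as their only complement, so $x'=y'=z$. Then (ii) yields $x=y$, a contradiction.

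For the remaining claims:
\begin{itemize}
\item If $\mathbf{L}\in\mathcal{M}$, then $\mathbf{L}$ has no pentagon sublattice at all, since modular lattices contain no $\mathbf{N}_5$; hence (i) holds.
\item If $\mathbf{L}\in\mathcal{W}$, then $'$ is injective by Lemma~\ref{L:H1}, so $x'=y'$ already gives $x=y$; in particular (ii) holds.
\item If $\mathbf{L}$ satisfies De Morgan's laws, injectivity clearly implies (ii). Conversely, assume (ii) and $a'=b'$. By \eqref{DM}, $(a\wedge b)'=a'\vee b'=a'$. Since $a\wedge b\le a$ and $(a\wedge b)'=a'$, (ii) gives $a\wedge b=a$; symmetrically $a\wedge b=b$, so $a=b$.
\item For $\mathbf{L}\in\mathcal{M}_{DM}$, (i) holds, hence (ii) holds, and injectivity follows from the previous item.
\end{itemize}

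The main care point is the case analysis in (i) $\Rightarrow$ (ii), namely checking that the five elements are distinct and that $c$ is incomparable with $a$ and $b$. Each check follows from the complement identities, so no real obstacle is expected.
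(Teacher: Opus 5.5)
Your proposal is correct and follows the paper's proof. You use the same pentagon $\{0,a,b,a',1\}$ for (i)$\Rightarrow$(ii), the same observation for (ii)$\Rightarrow$(iii), and Lemma~\ref{L:H1} for $\mathcal{W}$, and you spell out the distinctness and incomparability checks that the paper leaves implicit. The only small variation is in the De~Morgan step: the paper uses both laws to get $(a\wedge b)'=(a\vee b)'$ and applies (ii) once to $a\wedge b\le a\vee b$, whereas you apply (ii) twice, to $a\wedge b\le a$ and $a\wedge b\le b$, which needs only the law $(x\wedge y)'\approx x'\vee y'$.
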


\begin{proof}
(i) implies (ii) because $a<b$ and $a'=b'$ would mean that the elements $0,a,b,a',1$ form a $\{0,1\}$-sublattice isomorphic to a pentagon. 
(ii) implies (iii) because, clearly, a sub\-algebra whose lattice reduct is a pentagon does not satisfy the quasi-identity in (ii).

If $\mathbf{L}\in\mathcal{M}$, then it does not have a pentagon as a sublattice. If $\mathbf{L}\in\mathcal{W}$, then it certainly satisfies (ii) because the complementation is injective by Proposition \ref{L:H1}. Finally, if $\mathbf{L}$ satisfies De Morgan's laws and (ii), then for any $a,b\in L$ with $a'=b'$ we have $(a\wedge b)'=a'\vee b'=a'\wedge b'=(a\vee b)'$, whence $a\wedge b=a\vee b$ by (ii), and so $a=b$.
\end{proof}

\begin{figure}[ht]
\centering
\begin{tikzpicture}[scale=0.8,radius=2pt,font=\small]
\filldraw (1,0) circle node [below] {$0$} coordinate (0);
\filldraw (0,0.6) circle node [left] {$a$} coordinate (a);
\filldraw (0,1.4) circle node [left] {$b$} coordinate (b);
\filldraw (2,1) circle node [right] {$d$} coordinate (d);
\filldraw (1,1) circle node [left] {$c$} coordinate (c);
\filldraw (1,2) circle node [above] {$1$} coordinate (1);
\draw (0) -- (a) -- (b) -- (1) -- (d) -- (0);
\draw (0) -- (1);
\end{tikzpicture}
\caption{The lattice from Example \ref{EXA28}}\label{F5}
\end{figure}

\begin{example}\label{EXA28}
Conditions (i) and (ii) are not equivalent.
Indeed, consider the lattice in Figure~\ref{F5} with complementation $a\leftrightarrow c$ and $b\leftrightarrow d$. Clearly, (i) is not satisfied, whereas (ii) is, since no two different comparable elements have the same complement.
Note that De Morgan's laws fail, as $a<b$ but $a'=c\ngeq d=b'$.

Likewise, conditions (ii) and (iii) are not equivalent.
Again, consider the above lattice with complementation $a\mapsto c$, $b\mapsto c$ and $c\leftrightarrow d$. Then (ii) is not satisfied, as $a<b$ and $a'=b'$. However, (iii) is satisfied. 
De Morgan's laws fail again, as $(a\vee d)'=0$ but $a'\wedge d'=c$.
\end{example}

\section{A construction}
\label{SEC3}

In order to avoid possible discrepancies in the signatures of the involved algebras, we emphasize that we treat Boolean lattices as bounded lattices rather than as lattices with complementation. (After all, we carefully distinguish Boolean lattices and Boolean algebras throughout the paper.)

The following result describes all complementations on the direct product $\mathbf{K}\times\mathbf{B}$, where $\mathbf{K}$ is a complemented lattice and $\mathbf{B}$ is a Boolean lattice.
In brief, it suffices to associate a complementation on $\mathbf{K}$ with each element of the Boolean lattice. We do not need to exclude the trivial Boolean lattice, since the lattice with complementation constructed in Proposition \ref{L:V1} is then just a copy of $\mathbf{K}$ equipped with the chosen complementation.

\begin{proposition}\label{L:V1}
Let $\mathbf{K}$ be a complemented lattice and $\mathbf{B}$ a Boolean lattice.
\begin{enumerate}[\rm(i)]
\item
Let $S$ be an ``action of $\mathbf{B}$ on $\mathbf{K}$ by (injective) complementations'', i.e., a mapping $x\mapsto S_x$ from the Boolean lattice to the set of (injective) complementations on the complemented lattice. 
If we define
\[
C_S(a,x):=(S_x(a),x')
\quad\text{for } (a,x)\in K\times B,
\]
then
\[
\mathfrak{L}(\mathbf{K},\mathbf{B},S)=(K\times B,\vee,\wedge,C_S,(0,0),(1,1))
\]
is a lattice with (injective) complementation.
\item
Every (injective) complementation on the direct product $\mathbf{K}\times\mathbf{B}$ is of this form for some ``action'' $S$.
\item
In particular, if $\mathbf{K}=\mathbf{M}_n$, then $\mathfrak{L}(\mathbf{M}_n,\mathbf{B},S)$ belongs to $\mathcal{W}$ if and only if 
{\rm(a)}~$n=2$, in which case $\mathfrak{L}(\mathbf{M}_2,\mathbf{B},S)$ is a Boolean algebra, or
{\rm(b)}~$n=3$ and the complementations $S_x$ on $\mathbf{M}_3$ are injective. 
\end{enumerate}
\end{proposition}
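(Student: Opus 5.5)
The plan is to reduce everything to the observation that lattice operations in $\mathbf{K}\times\mathbf{B}$ act componentwise, and that complements in a Boolean lattice are unique.

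For (i), I would check directly that $(a,x)\vee C_S(a,x)=(a\vee S_x(a),x\vee x')=(1,1)$, and dually that $(a,x)\wedge C_S(a,x)=(0,0)$. This uses only that $S_x(a)$ is a complement of $a$ in $\mathbf{K}$ and that $x'$ is the complement of $x$ in $\mathbf{B}$. For injectivity, suppose $C_S(a,x)=C_S(b,y)$. Then $x'=y'$, so $x=y$ by uniqueness of complements in $\mathbf{B}$. Hence $S_x(a)=S_x(b)$, and injectivity of $S_x$ gives $a=b$.

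For (ii), let $C$ be any complementation on $\mathbf{K}\times\mathbf{B}$ and write $C(a,x)=(u,v)$. The equations $(a,x)\vee(u,v)=(1,1)$ and $(a,x)\wedge(u,v)=(0,0)$ hold componentwise. So $u$ is a complement of $a$ in $\mathbf{K}$, and $v$ is a complement of $x$ in $\mathbf{B}$, whence $v=x'$. Defining $S_x(a)$ as the first component of $C(a,x)$, each $S_x$ is a complementation on $\mathbf{K}$, and $C=C_S$. If $C$ is injective, then each $S_x$ is injective, because $C(a,x)$ and $C(b,x)$ share the second component $x'$.

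For (iii), the key computation is the symmetric difference in $\mathfrak{L}(\mathbf{M}_n,\mathbf{B},S)$:
\[
(a,x)+(b,y)=\bigl((S_x(a)\wedge b)\vee(a\wedge S_y(b)),\;x+y\bigr).
\]
In particular, for equal second coordinates we get $(a,x)+(b,x)=(a+_{S_x}b,\,0)$, where $+_{S_x}$ is the symmetric difference of $\mathbf{M}_n$ equipped with $S_x$.

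Necessity goes as follows. Suppose the algebra lies in $\mathcal{W}$. Restricting \eqref{equ1} to pairs with a common second coordinate shows that each $(\mathbf{M}_n,S_x)$ satisfies \eqref{equ1}. By Proposition~\ref{L:H2} this forces $n\le 3$. By Lemma~\ref{L:H1} each $S_x$ is then injective; alternatively, the whole complementation is injective and we apply (ii).

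Sufficiency goes as follows.
\begin{enumerate}[\rm(1)]
\item If $n=2$, then $\mathbf{K}$ is Boolean with its unique complementation, so the product with componentwise (unique) complementation is a Boolean algebra.
\item If $n=3$ and every $S_x$ is injective, first note that each $S_x$ is a fixed-point-free injection of $\{a_0,a_1,a_2\}$. It cannot contain a transposition, since the third atom would then have no free image. Hence $S_x$ is a $3$-cycle and $(\mathbf{M}_3,S_x)\cong\mathbf{M}_3'\in\mathcal{W}$.
\item Now suppose $(a,x)+(b,y)=(0,0)$. The second coordinate gives $x+y=0$ in the Boolean algebra $\mathbf{B}$, so $x=y$.
\item The first coordinate then reads $a+_{S_x}b=0$, so $a=b$ because $\mathbf{M}_3'\in\mathcal{W}$.
\end{enumerate}

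The only mildly delicate step is the necessity direction. One must not appeal to subalgebra closure, since $\mathbf{M}_n\times\{x\}$ is not a subalgebra. Instead, one argues directly with the quasi-identity on the fibre $\{(a,x)\mid a\in M_n\}$, exactly as in the displayed formula above.
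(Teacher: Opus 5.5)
Your proposal is correct and follows the paper's proof essentially step by step: you verify (i) componentwise, you obtain $S_x$ in (ii) by reading off the first coordinate, and in (iii) you restrict the quasi-identity to a fibre $\{(a,x)\mid a\in M_n\}$ for necessity and use the Boolean coordinate to force $x=y$ for sufficiency. The differences are minor. You cite Proposition~\ref{L:H2} and Lemma~\ref{L:H1} directly where the paper re-runs a cycle-decomposition argument for $n\ge 4$, and you explicitly check the case $n=2$, which the paper's proof leaves implicit.
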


\begin{proof}
(i) For every $(a,x)\in K\times B$, we have
\begin{gather*}
(a,x)\vee C_S(a,x)=(a\vee S_x(a),x\vee x')=(1,1),\\
(a,x)\wedge C_S(a,x)=(a\wedge S_x(a),x\wedge x')=(0,0).
\end{gather*}
Thus $C_S$ is a complementation on the direct product $\mathbf{K}\times\mathbf{B}$.
Suppose that the complementations $S_x$ are injective. If $C_S(a,x)=C_S(b,y)$ for some $(a,x)$, $(b,y)\in K\times B$, then $x'=y'$ in $\mathbf{B}$ implies $x=y$, and since the complementation $S_x=S_y$ on $\mathbf{K}$ is injective, $S_x(a)=S_y(b)$ implies $a=b$. 
Thus the complementation $C_S$ on $\mathbf{K}\times\mathbf{B}$ is injective. 

(ii) Let $'$ be a complementation on $\mathbf{K}\times\mathbf{B}$. For every $x\in B$ we define $S_x$ by stipulating
\begin{equation}\label{E:Sx}
S_x(a):=a^* \quad\text{if } (a,x)'=(a^*,x'),
\end{equation}
i.e., $(a,x)'=(S_x(a),x')$.
Then $S_x$ is a complementation on $\mathbf{K}$. Indeed, for every $a\in K$, we have
$a\vee S_x(a)=1$ and $a\wedge S_x(a)=0$,
because $(1,1)=(a,x)\vee (a,x)'=(a\vee S_x(a),x\vee x')$
and $(0,0)=(a\wedge S_x(a),x\wedge x')$.
Moreover, if $'$ is injective, then so is $S_x$, since for any $a,b\in K$, $S_x(a)=S_x(b)$ yields
$(a,x)'=(S_x(a),x')=(S_x(b),x')=(b,x)'$, whence $a=b$.

Finally, it is obvious that the complementation $C_S$ induced by the ``action'' $S$ defined by \eqref{E:Sx} is precisely the original complementation $'$ because $C_S(a,x)=(S_x(a),x')=(a,x)'$ for all $(a,x)\in K\times B$.

(iii) Let $\mathfrak{L}(\mathbf{M}_n,\mathbf{B},S)\in\mathcal{W}$. 
For any $x\in B$, the complementation $S_x$, or more accurately, the lattice $\mathbf{M}_n$ equipped with $S_x$, satisfies quasi-identity \eqref{W}. 
Indeed, if $S_x(a)\wedge b=0$ and $a\wedge S_x(b)=0$ for some $a,b\in M_n$, then in $\mathfrak{L}(\mathbf{M}_n,\mathbf{B},S)$ we have $C_S(a,x)\wedge (b,x)=(S_x(a)\wedge b,x'\wedge x)=(0,0)$ and $(a,x)\wedge C_S(b,x)=(a\wedge S_x(b),x\wedge x')=(0,0)$, whence $(a,x)=(b,x)$, i.e. $a=b$.
Moreover, $S_x$ is injective, because if $S_x(a)=S_x(b)$ for some $a,b\in M_n$, then $S_x(a)\wedge b=0$ and $a\wedge S_x(b)=0$, which yields $a=b$ by \eqref{W} (cf.\ Lemma~\ref{L:H1}).

Then $n=3$, because for $n\ge 4$ there exist $a,b\in M_n$ such that $a,b,S_x(a)$ and $S_x(b)$ are four distinct elements with $S_x(a)\wedge b=0$ and $a\wedge S_x(b)=0$, contradicting \eqref{W}. This can be seen as follows.
The complementation $S_x$ (restricted to $M_n\setminus\{0,1\}=\{a_0,\dots,a_{n-1}\}$) is either a single cycle of length $n$ or a product of several disjoint cycles of length $\ge 2$.
In the former case, we may take any $a\in M_n\setminus\{0,1\}$ and $b=S_x(S_x(a))$. In the latter case, we take $a$ and $b$ from the supports of two disjoint cycles. 
(Cf.\ Example \ref{EXA1} and Proposition \ref{L:H2}.) 

Conversely, let $n=3$ and suppose that the complementations $S_x$ are injective. Then each $S_x$ is (determined by) either $(a_0\;a_1\;a_2)$ or $(a_2\;a_1\;a_0)$, and hence $\mathbf{M}_3$ equipped with $S_x$ is isomorphic to $\mathbf{M}_3'$, which satisfies \eqref{W}.
Hence, if $C_S(a,x)\wedge (b,y)=(S_x(a)\wedge b,x'\wedge y)=(0,0)$ and
$(a,x)\wedge C_S(b,y)=(a\wedge S_y(b),x\wedge y')=(0,0)$ 
for some $(a,x),(b,y)\in M_3\times B$,
then $x=y$ and $S_x(a)\wedge b=0$ and $a\wedge S_x(b)=0$, whence $a=b$, 
proving $(a,x)=(b,y)$. Thus $\mathfrak{L}(\mathbf{M}_3,\mathbf{B},S)\in\mathcal{W}$.
\end{proof}

Now, let $\mathbf{K}=\mathbf{M}_n$ and
$\mathbf{B}=\mathbf{2}^k$ for some integers $n\ge 3$ and $k\ge 1$.
Let $\theta_0$ be the congruence induced by the projection $\psi_0\colon (a,x)\mapsto x$, and for every $i\in\{1,\dots,k\}$, let $\theta_i$ be the \emph{lattice} congruence induced by the projection $\psi_i$ deleting the $i$-th coordinate of $x=(x_1,\dots,x_k)\in B$, i.e., 
$\psi_i\colon (a,x_1,\dots,x_i,\dots,x_k)\mapsto (a,x_1,\dots,x_{i-1},x_{i+1},\dots,x_k)$.

Note that $\theta_0$ is indeed a congruence on the \emph{lattice with complementation} $\mathfrak{L}(\mathbf{M}_n,\mathbf{B},S)$, for every ``action'' $S$, since $\psi_0$ preserves complementation,
whereas $\theta_1,\dots,\theta_k$ are, in general, only congruences on the \emph{lattice} $\mathbf{M}_n\times\mathbf{B}$.

It is easy to see that, once a certain complementation on $\mathbf{M}_n$ is chosen, the complementation $C_S$ on $\mathbf{M}_n\times\mathbf{B}$, as defined in Proposition~\ref{L:V1}, is pointwise if and only if the ``action'' $S$ assigns this chosen complementation to every $x\in B$.

\begin{lemma}\label{L:V2}
Let $\mathfrak{L}(\mathbf{M}_n,\mathbf{B},S)$ be as above, with finite $\mathbf{B}=\mathbf{2}^k$. 
The lattice with complementation $\mathfrak{L}(\mathbf{M}_n,\mathbf{B},S)$ is sub\-directly irreducible if and only if none of the lattice congruences $\theta_1,\dots,\theta_k$ is a congruence on $\mathfrak{L}(\mathbf{M}_n,\mathbf{B},S)$.
\end{lemma}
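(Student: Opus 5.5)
The plan is to describe the congruence lattice of the underlying lattice $\mathbf{M}_n\times\mathbf{2}^k$ completely, and then to decide which of its members are also compatible with $C_S$.

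Since $\mathbf{M}_n$ ($n\ge 3$) is a simple lattice and $\mathbf{2}$ is simple, and lattices are congruence distributive, every lattice congruence of the finite product $\mathbf{M}_n\times\mathbf{2}^k$ is a product congruence. So the lattice congruences correspond bijectively to subsets $J\subseteq\{0,1,\dots,k\}$: coordinate $0$ is the $\mathbf{M}_n$-factor, and each coordinate in $J$ is collapsed. The congruence lattice is therefore Boolean with $2^{k+1}$ elements. Its coatoms are the kernels of the projections onto a single factor. Its atoms are exactly $\theta_0$ (collapse the $\mathbf{M}_n$-coordinate, keep the $B$-coordinate) and $\theta_1,\dots,\theta_k$ (collapse the $i$-th Boolean coordinate, keep the rest).

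The congruences of $\mathfrak{L}(\mathbf{M}_n,\mathbf{B},S)$ are precisely those lattice congruences that are compatible with $C_S$. This set is closed under arbitrary intersections, and it contains $\theta_0$ because $\psi_0$ preserves complementation.

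For "$\Rightarrow$": if some $\theta_i$ with $i\ge1$ were a congruence of the algebra, then $\theta_0$ and $\theta_i$ would be two distinct nontrivial congruences. In a Boolean congruence lattice distinct atoms meet to the identity, so $\theta_0\cap\theta_i=\Delta$, and the algebra would not be subdirectly irreducible.

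For "$\Leftarrow$": suppose no $\theta_i$ with $i\ge 1$ is a congruence of the algebra. I claim every nontrivial algebra congruence $\varphi$ contains $\theta_0$, which makes $\theta_0$ the monolith. As a nontrivial lattice congruence, $\varphi$ corresponds to a nonempty $J$.

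If $0\in J$, then $\varphi\supseteq\theta_0$ and we are done.

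So the main obstacle is the case $0\notin J$, with $\emptyset\ne J\subseteq\{1,\dots,k\}$. Here I must show that compatibility with $C_S$ forces $0\in J$, or contradicts the hypothesis. I would argue directly with elements. Pick $i\in J$. Then $\varphi$ identifies $(a,x)$ with $(a,y)$ whenever $x$ and $y$ differ only in coordinate $i$. Compatibility gives
\[
(S_x(a),x')\mathrel{\varphi}(S_y(a),y'),
\]
and $x',y'$ again differ only in coordinate $i$. Two elements of $\mathbf{M}_n\times\mathbf{2}^k$ are $\varphi$-related iff they agree on coordinate $0$ and on every coordinate outside $J$. Hence $S_x(a)=S_y(a)$ must hold for all such pairs $x,y$ and all $a$.

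This condition says exactly that $S_x$ does not depend on coordinate $i$. The same condition is also exactly what is needed for $\theta_i$ itself to be compatible with $C_S$. Indeed, $\theta_i$ relates $(a,x)$ and $(b,y)$ iff $a=b$ and $x,y$ differ at most in coordinate $i$; the $B$-part of $C_S$ is automatically fine, and the $\mathbf{M}_n$-part requires precisely $S_x(a)=S_y(a)$.

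Consequently $\theta_i$ would be an algebra congruence, contradicting the hypothesis. So $0\in J$ after all, every nontrivial congruence contains $\theta_0$, and $\mathfrak{L}(\mathbf{M}_n,\mathbf{B},S)$ is subdirectly irreducible.

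The delicate points are two. First, the product decomposition of lattice congruences, which relies on the simplicity of $\mathbf{M}_n$ and on congruence distributivity for finite products. Second, the verification that both $\varphi$-compatibility and $\theta_i$-compatibility reduce to the same coordinate-independence condition on $S$.
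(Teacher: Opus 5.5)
Your proposal is correct and follows essentially the same route as the paper: the Boolean congruence lattice of the reduct with atoms $\theta_0,\dots,\theta_k$, two atoms meeting trivially for the forward direction, and, for the converse, the observation that a congruence not containing $\theta_0$ preserves the $\mathbf{M}_n$-coordinate, so compatibility with $C_S$ forces $S_x(a)=S_y(a)$. The one difference is how you justify that preservation: you read it off the explicit description of the product congruence, whereas the paper writes the congruence as $\theta_{i_1}\circ\dots\circ\theta_{i_m}$ using permutability and chains through it; your version is a harmless simplification, and your case split $0\in J$ versus $0\notin J$ states precisely the restriction the paper loosely phrases as ``distinct from $\theta_0$''.
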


\begin{proof}
For simplicity, we write $\mathbf{L}$ instead of $\mathfrak{L}(\mathbf{M}_n,\mathbf{B},S)$. 
The congruence lattice of the direct product $\mathbf{M}_n\times\mathbf{B}$, the lattice reduct of $\mathbf{L}$, is a finite Boolean lattice of size $2^{k+1}$, whose atoms are precisely $\theta_0,\theta_1,\dots,\theta_k$.
Hence, if $\mathbf{L}$ is subdirectly irreducible, then, recalling that $\theta_0$ is a congruence on $\mathbf{L}$, we see that none of $\theta_1,\dots,\theta_k$ is a congruence on $\mathbf{L}$.

Conversely, suppose that none of $\theta_1,\dots,\theta_k$ is a congruence on $\mathbf{L}$. The goal is to show that $\theta_0$ is the monolith of the congruence lattice of $\mathbf{L}$.
The case $k=1$ is clear, so let $k\ge 2$.
Every nonzero congruence $\zeta$ on $\mathbf{L}$ distinct from $\theta_0$, being a lattice congruence, is the join of some $\theta_i$'s distinct from $\theta_0$, say 
$\zeta=\theta_{i_1}\vee\dots\vee\theta_{i_m}$ for some 
$\{i_1,\dots,i_m\}\subseteq\{1,\dots,k\}$.
Since congruences of complemented modular lattices permute, we have
$\zeta=\theta_{i_1}\circ\dots\circ\theta_{i_m}$.

Since $\theta_{i_1}$ is not compatible with $C_S$, there exist $(a,x)\ne (b,y)$ such that $((a,x),(b,y))\in\theta_{i_1}$ but $(C_S(a,x),C_S(b,y))\notin\theta_{i_1}$.
This means that $(a,x)$ and $(b,y)$ differ only in the $i_1$-th coordinate of $x$ and $y$, i.e., 
$a=b$, $x_{i_1}\ne y_{i_1}$ and $x_j=y_j$ for all $j\ne i_1$. 
On the other hand,
$C_S(a,x)=(S_x(a),x')$ and $C_S(b,y)=(S_y(a),y')$ differ in more than just the $i_1$-th coordinate of $x'$ and $y'$. Since $x_j'=y_j'$ for all $j\ne i_1$, this is only possible if $S_x(a)\ne S_y(a)$.

However, $\zeta=\theta_{i_1}\circ\dots\circ\theta_{i_m}$ is a congruence on $\mathbf{L}$ and $((a,x),(b,y))\in\zeta$ with $a=b$. 
Hence $(C_S(a,x),C_S(a,y))\in\zeta$ and we have
\[
(S_x(a),x')\equiv_{\theta_{i_1}}(c_1,z_1)
\equiv_{\theta_{i_2}}\cdots\equiv_{\theta_{i_{m-1}}}
(c_{m-1},z_{m-1})\equiv_{\theta_{i_m}} (S_y(a),y')
\]
for some $(c_1,z_1),\dots,(c_{m-1},z_{m-1})$.
Thus $S_x(a)=c_1=\dots=c_{m-1}=S_y(a)$, contradicting $S_x(a)\ne S_y(a)$.

Therefore, every nonzero congruence of $\mathbf{L}$ exceeds $\theta_0$, as desired.
\end{proof}

\begin{lemma}\label{L:V3}
Let $\mathfrak{L}(\mathbf{M}_n,\mathbf{B},S)$ be as above, with finite $\mathbf{B}=\mathbf{2}^k$. 
Suppose that there exists $x\in B$ such that for every $y\in B\setminus\{x\}$ there exists $a\in M_n$ with $S_x(a)\ne S_y(a)$. Then $\mathfrak{L}(\mathbf{M}_n,\mathbf{B},S)$ is subdirectly irreducible.
\end{lemma}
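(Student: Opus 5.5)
By Lemma~\ref{L:V2}, it suffices to show that none of the lattice congruences $\theta_1,\dots,\theta_k$ is compatible with $C_S$. So fix $i\in\{1,\dots,k\}$. We aim to produce a pair of elements that are $\theta_i$-related but whose complements are not $\theta_i$-related.

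Let $x$ be the element given by the hypothesis. Let $y\in B$ be the element obtained from $x$ by flipping only its $i$-th coordinate, so that $y\ne x$ and $x_j=y_j$ for all $j\ne i$. By hypothesis there is $a\in M_n$ with $S_x(a)\ne S_y(a)$. The elements $(a,x)$ and $(a,y)$ differ only in the $i$-th coordinate of the Boolean component, hence $((a,x),(a,y))\in\theta_i$.

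Next we compare the complements, $C_S(a,x)=(S_x(a),x')$ and $C_S(a,y)=(S_y(a),y')$. Recall that $\theta_i$ is the kernel of $\psi_i$, which deletes only the $i$-th Boolean coordinate and keeps the $\mathbf{M}_n$-component. Two elements are therefore $\theta_i$-related only if their $\mathbf{M}_n$-components coincide. Since $S_x(a)\ne S_y(a)$, the pair $(C_S(a,x),C_S(a,y))$ is not in $\theta_i$. Hence $\theta_i$ is not a congruence of $\mathfrak{L}(\mathbf{M}_n,\mathbf{B},S)$.

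As $i$ was arbitrary, Lemma~\ref{L:V2} gives subdirect irreducibility. There is no real obstacle here. The only point to note is that the single element $x$ works uniformly for every $i$, because the hypothesis covers all $y\ne x$, in particular all neighbours of $x$ in the Boolean cube.
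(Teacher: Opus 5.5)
Your proof is correct and follows the paper's argument essentially step for step. You flip the $i$-th coordinate of the distinguished $x$ to get $y$, use the hypothesis to find $a$ with $S_x(a)\ne S_y(a)$, observe that $((a,x),(a,y))\in\theta_i$ while the complements differ in the $\mathbf{M}_n$-component, and conclude via Lemma~\ref{L:V2}.
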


\begin{proof}
Let $x=(x_1,\dots,x_k)\in B$ be a $k$-tuple satisfying the condition above. 
For a fixed but arbitrary $i\in\{1,\dots,k\}$, let $y\in B$ be the $k$-tuple obtained from $x$ by replacing $x_i$ with $x_i'$, i.e., $y_i=x_i'$ and $y_j=x_j$ otherwise. 
Clearly, $x\ne y$ and so there exists $a\in M_n$ such that $S_x(a)\ne S_y(a)$.
We then have $((a,x),(a,y))\in\theta_i$, while
$(C_S(a,x),C_S(a,y))\notin\theta_i$, since
$C_S(a,x)=(S_x(a),x')$ and $C_S(a,y)=(S_y(a),y')$
where $S_x(a)\ne S_y(a)$.
Thus $\theta_i$ is not a congruence. By Lemma~\ref{L:V2} we get that $\mathfrak{L}(\mathbf{M}_n,\mathbf{B},S)$ is subdirectly irreducible.
\end{proof}

In particular, in the case of $\mathbf{M}_3$ and $\mathbf{B}=\mathbf{2}^k$, we may pick any $x\in B$ and define $S_x$ to be the complementation determined by the cycle $(a_0\;a_1\;a_2)$, and $S_y$ to be the complementation determined by the inverse cycle $(a_2\;a_1\;a_0)$ for every $y\in B\setminus\{x\}$.
We then have $S_x(a_0)\ne S_y(a_0)$, and so $\mathfrak{L}(\mathbf{M}_3,\mathbf{B},S)$ is subdirectly irreducible by Lemma~\ref{L:V3}. Therefore, from Proposition~\ref{L:V1} and Lemma~\ref{L:V3} we obtain:

\begin{corollary}\label{C:34}
\leavevmode
\begin{enumerate}[\rm(i)]
\item
For every finite Boolean lattice $\mathbf{B}$, there exists a complementation on the bounded lattice $\mathbf{M}_3\times\mathbf{B}$ such that the resulting lattice with complementation is subdirectly irreducible and belongs to the class $\mathcal{W}$.
\item
There are infinitely many finite subdirectly irreducible algebras in $\mathcal{W}$.
\end{enumerate}
\end{corollary}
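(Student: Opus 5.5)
For part (i), I will use the construction described right before the statement. Let $\mathbf{B}=\mathbf{2}^k$ be a finite Boolean lattice. I fix an element $x\in B$ and define an ``action'' $S$ of $\mathbf{B}$ on $\mathbf{M}_3$ as follows: $S_x$ is the complementation given by the cycle $(a_0\;a_1\;a_2)$, and $S_y$ is the complementation given by the inverse cycle $(a_2\;a_1\;a_0)$ for every $y\in B\setminus\{x\}$. Both complementations are injective, since they are bijections on $\{a_0,a_1,a_2\}$ that swap $0$ and $1$. So by Proposition~\ref{L:V1}(i), $\mathfrak{L}(\mathbf{M}_3,\mathbf{B},S)$ is a lattice with injective complementation whose lattice reduct is $\mathbf{M}_3\times\mathbf{B}$. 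Proposition~\ref{L:V1}(iii)(b) then gives $\mathfrak{L}(\mathbf{M}_3,\mathbf{B},S)\in\mathcal{W}$.

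For subdirect irreducibility, I will check the hypothesis of Lemma~\ref{L:V3} with this chosen $x$. For any $y\ne x$ we have $S_x(a_0)=a_1$ and $S_y(a_0)=a_2$, so $S_x(a_0)\ne S_y(a_0)$. Lemma~\ref{L:V3} therefore yields subdirect irreducibility.

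One degenerate case needs a separate remark. Lemma~\ref{L:V3} was set up with $k\ge1$. If $\mathbf{B}$ is trivial ($k=0$), the algebra is simply $\mathbf{M}_3'$, which belongs to $\mathcal{W}$ and is simple, because $\mathbf{M}_3$ is a simple lattice. So that case is covered as well.

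For part (ii), I apply (i) to $\mathbf{B}=\mathbf{2}^k$ for every $k\ge0$. This produces subdirectly irreducible members of $\mathcal{W}$ of size $5\cdot2^k$. These sizes are pairwise distinct, so the algebras are pairwise non-isomorphic, and there are infinitely many of them.

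I do not expect a genuine obstacle, since everything reduces to earlier results. The step most worth double-checking is that Lemma~\ref{L:V3} really applies. Its hypothesis only asks that $S_x$ differ from each other $S_y$ somewhere, and the two inverse $3$-cycles disagree at every atom, so this holds.
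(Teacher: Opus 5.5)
Your proposal is correct and follows the paper's argument exactly. You use the same action $S$ (the cycle $(a_0\;a_1\;a_2)$ at one fixed $x$ and the inverse cycle at every other $y$), get membership in $\mathcal{W}$ from Proposition~\ref{L:V1}(iii)(b), and get subdirect irreducibility from Lemma~\ref{L:V3} via $S_x(a_0)\ne S_y(a_0)$. Your separate treatment of the trivial $\mathbf{B}$, where the algebra is just the simple $\mathbf{M}_3'$, covers a degenerate case the paper leaves implicit, and you handle it correctly.
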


\begin{figure}[ht]
\centering
\begin{tikzpicture}[scale=1,radius=1.6pt,font=\small]
\coordinate (0) at (1,0);
\coordinate (a) at (0,1);
\coordinate (b) at (1,1);
\coordinate (c) at (2,1);
\coordinate (d) at (1,2);
\coordinate (e) at (3.5,1);
\coordinate (f) at (2.5,2);
\coordinate (g) at (3.5,2);
\coordinate (h) at (4.5,2);
\coordinate (1) at (3.5,3);
\draw (0) -- (e);
\draw (a) -- (f); 
\draw (b) -- (g);
\draw (c) -- (h);
\draw (d) -- (1);
\draw (0) -- (b) -- (d) -- (a) -- (0) -- (c) -- (d);
\draw (e) -- (g) -- (1) -- (f) -- (e) -- (h) -- (1);
\draw (0) node [below] {$(0,0)$};
\draw (a) node [fill=white,below] {$(a_0,0)$};
\draw (b) node [fill=white,below] {$(a_1,0)$};
\draw (c) node [fill=white,below] {$(a_2,0)$};
\draw (d) node [fill=white,above] {$(1,0)$};
\draw (e) node [fill=white,below] {$(0,1)$};
\draw (f) node [fill=white,above] {$(a_0,1)$};
\draw (g) node [fill=white,above] {$(a_1,1)$};
\draw (h) node [fill=white,above] {$(a_2,1)$};
\draw (1) node [above] {$(1,1)$};
\filldraw (0) circle;
\filldraw (a) circle;
\filldraw (b) circle;
\filldraw (c) circle;
\filldraw (d) circle;
\filldraw (e) circle;
\filldraw (f) circle;
\filldraw (g) circle;
\filldraw (h) circle;
\filldraw (1) circle;
\end{tikzpicture}
\caption{The lattice from Example \ref{EXA4}}\label{F6}
\end{figure}

\begin{example}\label{EXA4}
The simplest nontrivial example of the above construction is $\mathfrak{L}(\mathbf{M}_3,\mathbf{2},S)$, i.e., $n=3$, $k=1$ and the lattice reduct is $\mathbf{M}_3\times\mathbf{2}$ (see Figure~\ref{F6}).
In principle, there are two ways of defining the complementations $S_0$ and $S_1$ (two ``actions'' of $\mathbf{2}$ on $\mathbf{M}_3$) that give rise to lattices with complementation isomorphic to those from Example~\ref{EXA2}:

If $S_0$ and $S_1$ coincide, we define this single complementation to be the one determined by the cycle $(a_0\;a_1\;a_2)$ (choosing $(a_2\;a_1\;a_0)$ would yield the same result, up to isomorphism), and we have:
\begin{center}
\begin{tabular}{c|cccccccc}
$(a,x)$ & $(a_0,0)$ & $(a_1,0)$ & $(a_2,0)$ & $(1,0)$
& $(0,1)$ & $(a_0,1)$ & $(a_1,1)$ & $(a_2,1)$ \\ 
\hline
$C_S(a,x)$ & $(a_1,1)$ & $(a_2,1)$ & $(a_0,1)$ & $(0,1)$
& $(1,0)$ & $(a_1,0)$ & $(a_2,0)$ & $(a_0,0)$
\end{tabular}
\end{center}
This $\mathfrak{L}(\mathbf{M}_3,\mathbf{2},S)$ is isomorphic to the direct product $\mathbf{M}_3'\times\mathbf{2}$ (it is isomorphic to $\mathbf{H}_1$ from Example~\ref{EXA2}).

If $S_0$ and $S_1$ are distinct, we define $S_0$ and $S_1$ to be the complementations determined respectively by $(a_0\;a_1\;a_2)$ and $(a_2\;a_1\;a_0)$ (or vice versa, but that choice would yield the same result, up to isomorphism). We then have
\begin{center}
\begin{tabular}{c|cccccccc}
$(a,x)$ & $(a_0,0)$ & $(a_1,0)$ & $(a_2,0)$ & $(1,0)$
& $(0,1)$ & $(a_0,1)$ & $(a_1,1)$ & $(a_2,1)$ \\
\hline
$C_S(a,x)$ & $(a_1,1)$ & $(a_2,1)$ & $(a_0,1)$ & $(0,1)$
& $(1,0)$ & $(a_2,0)$ & $(a_0,0)$ & $(a_1,0)$
\end{tabular}
\end{center}
This $\mathfrak{L}(\mathbf{M}_3,\mathbf{2},S)$ is subdirectly irreducible (it is isomorphic to $\mathbf{H}_2$ from Example~\ref{EXA2}).
\end{example}

Since $\mathbf{M}_n'$ has no proper subalgebras, we may expect that the subalgebras of the direct product of $\mathbf{M}_n'$ with a Boolean algebra can be described in terms of certain subsets of this Boolean algebra, as stated in the following lemma.

Given a lattice with complementation $\mathbf{L}$, for any  $X,Y\subseteq L$ we define
\begin{align*}
X' & :=\{x'\mid x\in X\}, \\
X\vee Y & :=\{x\vee y\mid x\in X,y\in Y\}, \\	
X\wedge Y & :=\{x\wedge y\mid x\in X,y\in Y\}.
\end{align*}

\begin{lemma}
Let $\mathbf{B}$ be a Boolean algebra and 
$\emptyset\neq Z\subseteq M_n\times B$ with $n\ge 3$. The following are equivalent:
\begin{enumerate}[\rm(i)]
\item 
$Z$ is a subuniverse of the direct product 
$\mathbf{M}_n'\times\mathbf{B}$;
\item 
there exists a subuniverse $X$ of the $\{0\}$-lattice reduct $(B,\vee,\wedge,0)$ and a (possibly empty) subuniverse $Y$ of the Boolean algebra $\mathbf{B}$ such that $Y\subseteq X\cap X'$, $X\vee Y\subseteq Y$, $X'\wedge Y\subseteq Y$, $X\vee X'\subseteq X'$, $X\wedge X'\subseteq X$, and
\[
Z=(\{0\}\times X)\cup(\{a_0,\dots,a_{n-1}\}\times Y)\cup(\{1\}\times X').
\]
\end{enumerate}
\end{lemma}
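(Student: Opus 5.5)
For (i) $\Rightarrow$ (ii) I will read off the three sets directly from $Z$. Put
\[
X:=\{x\in B\mid (0,x)\in Z\},\qquad Y:=\{x\in B\mid (a_0,x)\in Z\}.
\]
The first task is to show that the fibres over the atoms all agree with $Y$ and that the fibre over $1$ is $X'$. If $(a_i,x)\in Z$, then applying the complementation gives $(a_{i+1},x')\in Z$, and applying it again gives $(a_{i+2},x)\in Z$. Meeting these two elements yields $(a_{i+1},x')\wedge(a_{i+2},x)=(0,0)$, so only $0\in X$ follows from that, which is not what I need. Instead I will use $(a_i,x)\wedge(a_{i+2},x)=(0,x)$ and $(a_i,x)\vee(a_{i+2},x)=(1,x)$ (using $n\ge 3$, so $a_i\ne a_{i+2}$ when $n\ge3$; when $n=3$ this still works). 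This gives $x\in X$ and $(1,x)\in Z$. Combining with $(a_{i+1},x')\in Z$, the same trick gives $x'\in X$.

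To show that every atom fibre equals $Y$, I will use $(1,x)\wedge(a_j,x)$, but that requires $(a_j,x)$ already. The cleaner route is to apply the complement repeatedly: $(a_i,x)\mapsto(a_{i+1},x')\mapsto(a_{i+2},x)\mapsto\cdots$. This fails to reach $(a_{i+1},x)$ when $n$ is odd only via parity. So I will use $(1,y)\wedge(a_i,x)=(a_i,x\wedge y)$ with $y$ chosen suitably; taking $y=x'$ gives $(a_i,0)$. Since $(1,x')$ is available, I obtain $(a_i,0)$, and then complements give all $(a_j,0)$ and $(a_j,1)$. Joining, $(a_{i+1},x')\vee(a_i,0)$ gives $(1,x')$, not an atom. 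The remaining gap is exactly transporting $x$ from index $i$ to $i+1$, and I expect this to be the main obstacle. What I will try first is to write $x=(x\wedge x'')$, note that $x''=x$ in $\mathbf B$, and observe that the map $(a_i,x)\mapsto(a_{i+2},x)$ is a shift by $2$. Together with meets and joins against $(a_j,0)$ and $(a_j,1)$ (using $(a_j,1)\wedge(a_i,x)=(0,x)$ for $j\ne i$, which does not help), this leaves me to expect that for odd $n$ the shift by $2$ generates all indices, while for even $n$ I need a separate argument or possibly a correction to the description. I will check this against $\mathbf M_4'\times\mathbf 2$ before finalizing.

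Granting equal atom fibres $Y$, the top fibre is $\{y\mid (1,y)\in Z\}$. Since $(1,y)\in Z$ if and only if $(0,y')=(1,y)'\in Z$, and conversely, this fibre equals $X'$ by injectivity of the Boolean complement. The conditions then follow from closure of $Z$:
\begin{itemize}
\item $X$ is a $\{0\}$-sublattice because $(0,x)\vee(0,y)=(0,x\vee y)$, and likewise for meets;
\item $Y\subseteq X\cap X'$ by the computation above;
\item $X\vee Y\subseteq Y$ comes from $(0,x)\vee(a_0,y)$;
\item $X'\wedge Y\subseteq Y$ comes from $(1,x')\wedge(a_0,y)$;
\item $X\vee X'\subseteq X'$ comes from $(0,x)\vee(1,y')$;
\item $X\wedge X'\subseteq X$ comes from $(0,x)\wedge(1,y')$;
\item $Y$ is a Boolean subalgebra when nonempty, via meets and joins of $(a_0,y)$ with $(a_0,z)$ and via complementation of the atom fibres.
\end{itemize}

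For (ii) $\Rightarrow$ (i), I will verify closure of the given $Z$ case by case according to the first coordinates: $0$, an atom, or $1$. Each join, meet and complement lands in one of the three pieces because of the listed inclusions. The products of two distinct atoms, which give $0$ or $1$ in the first coordinate, use $Y\subseteq X\cap X'$ together with the closure properties of $X$ and $X'$. This direction is routine.
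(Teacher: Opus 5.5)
\textbf{Gap in (i)$\Rightarrow$(ii): the atom fibres are never shown to coincide.}

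Your case check for (ii)$\Rightarrow$(i) is fine. The bookkeeping in (i)$\Rightarrow$(ii), meaning $X$, the top fibre $X'$, and the listed inclusions, is also fine. The problem is the central step of (i)$\Rightarrow$(ii), which you leave open: you never prove that all the fibres $\{x\mid (a_j,x)\in Z\}$ over the atoms are equal.

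For odd $n$, the shift $(a_i,x)\mapsto (a_i,x)''=(a_{i+2},x)$ does the job. For even $n$, however, it only moves $x$ within one parity class of indices, and it puts $x'$ on the other class. You explicitly defer this case and even suggest the description might need correcting.

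Several later steps depend on this missing fact:
\begin{enumerate}[\rm(a)]
\item defining $Y$ as the fibre over $a_0$;
\item the inclusion $\{a_0,\dots,a_{n-1}\}\times Y\subseteq Z$;
\item closure of $Y$ under complementation, since $(a_0,y)'=(a_1,y')$ lands in the fibre over $a_1$, not $a_0$.
\end{enumerate}
Your remark that complementing $(a_i,0)$ yields all $(a_j,0)$ and $(a_j,1)$ has the same defect: for even $n$ it yields only $(a_{i+2k},0)$ and $(a_{i+2k+1},1)$.

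\textbf{The missing step.}

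The statement is correct for even $n$, and the fix is one line; it is exactly the paper's argument. From $(a_i,x)\in Z$ you already have $(0,x)\in Z$, $(1,x)\in Z$ and $(a_{i+1},x')\in Z$. Hence
\[
(a_{i+1},x')\vee(0,x)=(a_{i+1},1)\in Z.
\]
This element is also $(a_i,0)'$, which you had already derived. Then
\[
(a_{i+1},1)\wedge(1,x)=(a_{i+1},x)\in Z.
\]
You tried meeting $(a_j,1)$ with $(a_i,x)$, which collapses the first coordinate to $0$. Meeting it with $(1,x)$ instead transports $x$ from $a_i$ to $a_{i+1}$. Iterating gives $\{a_0,\dots,a_{n-1}\}\times\{x\}\subseteq Z$, and symmetrically $\{a_0,\dots,a_{n-1}\}\times\{x'\}\subseteq Z$, for every $n\ge 3$. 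With this inserted, your proof coincides with the paper's.

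\textbf{Minor point.} You need $0\in X$ even when no element of $Z$ has an atom as first coordinate. For this, use $(c,x)\wedge(c,x)'=(0,0)$ for any $(c,x)\in Z$.
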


\begin{proof}
(i) $\Rightarrow$ (ii): 
Let $Z$ be a nonempty subuniverse of the direct product 
$\mathbf{M}_n'\times\mathbf{B}$ and 
$A:=M_n\setminus\{0,1\}=\{a_0,\dots,a_{n-1}\}$. Further, put 
\[
X:=\{x\in B\mid(0,x)\in Z\}
\quad\text{and}\quad
Y:=\{x\in B\mid(a_i,x)\in Z \text{ for some } a_i\}.
\] 
Note that $X'=\{x\in B\mid (1,x)\in Z\}$.
It is clear that $X$ is a subuniverse of $(B,\vee,\wedge,0)$, and $X'$ of $(B,\vee,\wedge,1)$.
Also, $Y$ is a subuniverse of $(B,\vee,\wedge)$. 
Let $x\in Y$, i.e., $(a_i,x)\in Z$ for some $a_i$.
Then $(a_j,x)\in Z$ for every $a_j$ in the ``even orbit'' of $a_i$, $\{a_i,a_i'',\dots\}=\{a_{i+k}\mid k\text{ even}\}$, whence $(0,x),(1,x)\in Z$.
Note that this ``orbit'' is all of $A$ when $n$ is odd, while it is only half of $A$ when $n$ is even.
In the latter case, $(a_j,x')\in Z$ for every $a_j$ in the ``odd orbit'' of $a_i$, $\{a_i',a_i''',\dots\}=\{a_{i+k}\mid k\text{ odd}\}$, and $(0,x'),(1,x')\in Z$.
Now, since $(a_{i+1},x'),(0,x)\in Z$, we have 
$(a_{i+1},1)\in Z$. But we also have $(1,x)\in Z$, and so $(a_{i+1},x)\in Z$. Therefore, $(a_j,x)\in Z$ for every $a_j$ in the ``even orbit'' of $a_{i+1}$, which is just the ``odd orbit'' of $a_i$, and hence $(a_j,x)\in Z$ for every $a_j\in A$.
Analogously, $(a_j,x')\in Z$ for every $a_j\in A$.
Thus $M_n\times\{x,x'\}\subseteq Z$, proving that $Y$ is a subuniverse of $\mathbf{B}$.

Now, if $x\in Y$, then $(a_i,x),(a_{i+1},x)\in Z$ for some (in fact, every) $a_i$, and hence $(0,x),(1,x)\in Z$, showing $x\in X\cap X'$. The other conditions, i.e., $X\vee Y\subseteq Y$, $X'\wedge Y\subseteq Y$, $X\vee X'\subseteq X'$ and $X\wedge X'\subseteq X$, are easily verified.

Finally, we have $(\{0\}\times X)\cup (\{1\}\times X')\subseteq Z$, and $A\times Y\subseteq Z$ because, by the above arguments, $x\in Y$ is equivalent to $A\times\{x,x'\}\subseteq Z$.
On the other hand, if $(a_i,x)\in Z$, then $x\in Y$, and so $(a_i,x)\in A\times Y$. Consequently, $Z$ equals 
$(\{0\}\times X)\cup (\{1\}\times X')\cup (A\times Y)$ as claimed.

(ii) $\Rightarrow$ (i): 
This follows immediately by considering all possible cases.
\end{proof}

\begin{example}
For $\mathbf{M}_3'$, if $\mathbf{B}$ is the two-element Boolean algebra, we have the following cases:
\begin{center}
\begin{tabular}{l|l|l}
$X$ & $Y$ & $Z$ \\
\hline
$\{0\}$ & $\emptyset$ & $\{(0,0),(1,1)\}$ \\
$B$ & $\emptyset$ & $B\times B$ \\
$B$ & $B$ & $M_3\times B$
\end{tabular}
\end{center}
If $\mathbf{B}$ is the four-element Boolean algebra with atoms $e$ and $f$, then we have the following cases:
\begin{center}
\begin{tabular}{l|l|l}
$X$ & $Y$ & $Z$ \\
\hline
$\{0\}$ & $\emptyset$ & $\{(0,0),(1,1)\}$ \\
$\{0,e\}$ & $\emptyset$ & $(\{0\}\times\{0,e\})\cup(\{1\}\times\{f,1\})$ \\
$\{0,f\}$ & $\emptyset$ & $(\{0\}\times\{0,f\})\cup(\{1\}\times\{e,1\})$ \\
$\{0,1\}$ & $\emptyset$ & $\{0,1\}\times\{0,1\}$ \\
$\{0,1\}$ & $\{0,1\}$   & $M_3\times\{0,1\}$ \\
$B$ & $\emptyset$ & $\{0,1\}\times B$ \\
$B$ & $B$ & $M_3\times B$
\end{tabular}
\end{center}
\end{example}

\section{Properties of $\mathcal{W}_{DM}$ and $\mathcal{M}_{DM}$; neutral elements}
\label{SEC4}

In this section, we describe some elementary properties of members of the classes $\mathcal{W}_{DM}$ and $\mathcal{M}_{DM}$, with an emphasis on the role of De Morgan's laws \eqref{DM}. We also characterize neutral elements in terms of complementation.

Following \cite{CL18}, we say that a lattice with complementation is \emph{weakly orthomodular} if it satisfies the quasi-identity
\begin{equation}\label{woml}
x\le y \;\Rightarrow\; x\vee (x'\wedge y)\approx y,
\end{equation}
and \emph{dually weakly orthomodular} if it satisfies the dual quasi-identity
\begin{equation}\label{dwoml}
x\ge y \;\Rightarrow\; x\wedge (x'\vee y)\approx y.
\end{equation}

While it is clear that \eqref{woml} can be equivalently expressed by either of the identities
\begin{gather*}
(x\wedge y)\vee\big((x\wedge y)'\wedge y\big)\approx y,\\
x\vee\big(x'\wedge (x\vee y)\big)\approx x\vee y,
\end{gather*}
perhaps slightly less obvious is that \eqref{woml} is also equivalent to the identity
\begin{equation}\label{woml-}
x\vee\big((x\wedge y)'\wedge y\big)\approx x\vee y.
\end{equation}
Indeed, \eqref{woml-} is obtained from the first identity above by taking the join with $x$, and conversely, \eqref{woml-} clearly implies \eqref{woml}. 

Dually, \eqref{dwoml} is equivalent to any of the following identities:
\begin{gather}
\notag
(x\vee y)\wedge\big((x\vee y)'\vee y\big)\approx y,\\
\notag
x\wedge\big(x'\vee (x\wedge y)\big)\approx x\wedge y,\\
\label{dwoml-}
x\wedge\big((x\vee y)'\vee y\big)\approx x\wedge y.
\end{gather}

We let $\mathcal{O}$ and $\mathcal{O}^d$ denote respectively the variety of weakly orthomodular lattices with complementation and the variety of dually weakly orthomodular lattices with complementation.
It is obvious that $\mathcal{M}\subseteq\mathcal{O}\cap\mathcal{O}^d$.

\begin{lemma}\label{lem3}
If $\mathbf{L}\in\mathcal{W}\cup (\mathcal{O}\cup \mathcal{O}^d)_{DM}$, then the complementation is injective.
\end{lemma}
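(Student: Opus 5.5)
The case $\mathbf{L}\in\mathcal{W}$ is exactly Lemma~\ref{L:H1}, so I will only treat $\mathbf{L}\in\mathcal{O}_{DM}$ and $\mathbf{L}\in\mathcal{O}^d_{DM}$. The plan is to show that, in either case, $\mathbf{L}$ satisfies the quasi-identity
\[
(x\le y \;\mathrel{\&}\; x'\approx y')\;\Rightarrow\; x\approx y,
\]
which is condition (ii) of the earlier lemma on pentagons. That lemma already shows that under De Morgan's laws this quasi-identity is equivalent to injectivity of the complementation. Explicitly: if $a'=b'$, then $(a\wedge b)'=a'\vee b'=a'=a'\wedge b'=(a\vee b)'$. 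Since $a\wedge b\le a\vee b$, the quasi-identity gives $a\wedge b=a\vee b$, hence $a=b$.

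So let $a\le b$ with $a'=b'$. In the weakly orthomodular case, \eqref{woml} gives
\[
b=a\vee(a'\wedge b)=a\vee(b'\wedge b)=a\vee 0=a.
\]
In the dually weakly orthomodular case I apply \eqref{dwoml} with $x:=b\ge y:=a$:
\[
a=b\wedge(b'\vee a)=b\wedge(a'\vee a)=b\wedge 1=b.
\]
Both computations use only the substitution $a'=b'$ and the complement laws.

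Note that De Morgan's laws enter only in the reduction from injectivity to the comparable case; the weak orthomodularity step itself needs no De Morgan law. I expect no real obstacle beyond correctly orienting the variables in \eqref{dwoml}. Alternatively, one can skip the reduction: from $a'=b'$, De Morgan gives $(a\vee b)'=a'$, and \eqref{woml} with $x=a\le y=a\vee b$ yields $a\vee b=a\vee(a'\wedge(a\vee b))=a\vee((a\vee b)'\wedge(a\vee b))=a$. Symmetrically $a\vee b=b$, so $a=b$. The dual case is handled analogously using $a\wedge b$.
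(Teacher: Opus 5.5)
Your proof is correct and is essentially the paper's argument: the paper applies \eqref{woml} (resp.\ \eqref{dwoml}) directly to the comparable pair $a\wedge b\le a\vee b$, whose complements coincide by De Morgan's laws, and your reduction through the comparable-case quasi-identity amounts to exactly that. Your alternative, which uses the pair $a\le a\vee b$ (resp.\ $a\wedge b\le a$), is a minor variant that needs only one of the two De Morgan laws in each case.
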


\begin{proof}
By Lemma \ref{L:H1}, we know  that this is true for $\mathbf{L}\in\mathcal{W}$, so let $\mathbf{L}\in (\mathcal{O}\cup \mathcal{O}^d)_{DM}$.
For any $a,b\in L$, if $a'=b'$, then $(a\vee b)'=(a\wedge b)'$ and we have
$a\wedge b=(a\wedge b)\vee ((a\wedge b)'\wedge (a\vee b))=a\vee b$ 
by \eqref{woml} when $\mathbf{L}\in\mathcal{O}$, and 
$a\vee b=(a\vee b)\wedge ((a\vee b)'\vee (a\wedge b))=a\wedge b$
by \eqref{dwoml} when $\mathbf{L}\in\mathcal{O}^d$. 
In either case, $a=b$.
\end{proof}

\begin{lemma}\label{lem3a}
We have $\mathcal{W}_{DM}\cup\mathcal{M}\subseteq\mathcal{O}\cap \mathcal{O}^d$.
Hence, if $\mathbf{L}\in\mathcal{M}_{DM}$, then the complementation is injective.
\end{lemma}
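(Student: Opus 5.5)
The lemma has three parts: $\mathcal{M}\subseteq\mathcal{O}\cap\mathcal{O}^d$, $\mathcal{W}_{DM}\subseteq\mathcal{O}\cap\mathcal{O}^d$, and injectivity of the complementation on $\mathcal{M}_{DM}$.

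For $\mathcal{M}\subseteq\mathcal{O}\cap\mathcal{O}^d$ (already noted as obvious) we use modularity directly. If $x\le y$, then
\[
x\vee(x'\wedge y)=(x\vee x')\wedge y=y,
\]
and dually if $x\ge y$, then $x\wedge(x'\vee y)=(x\wedge x')\vee y=y$. The last sentence of the lemma then follows from Lemma~\ref{lem3}: any $\mathbf{L}\in\mathcal{M}_{DM}$ lies in $(\mathcal{O}\cap\mathcal{O}^d)_{DM}$, so its complementation is injective.

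The real work is $\mathcal{W}_{DM}\subseteq\mathcal{O}$; the inclusion into $\mathcal{O}^d$ will be the dual argument. Take $\mathbf{L}\in\mathcal{W}_{DM}$ and $a\le b$, and put $c:=a\vee(a'\wedge b)$. Clearly $c\le b$, and we want $c=b$. By \eqref{W} it suffices to show $c'\wedge b=0$ and $c\wedge b'=0$.

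The second condition is easy. Since $c\le b$, we have $c\wedge b'\le b\wedge b'=0$.

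For the first condition we use De Morgan. We have $c'=a'\wedge(a'\wedge b)'=a'\wedge(a''\vee b')$, so
\[
c'\wedge b=a'\wedge b\wedge(a''\vee b').
\]
This requires showing $a'\wedge b\wedge(a''\vee b')=0$, and that is the main obstacle, since no distributivity is available. I would first try to show $a'\wedge b\le (a''\vee b')'$ and then meet with $a''\vee b'$. By De Morgan, $(a''\vee b')'=a'''\wedge b''$. De Morgan's laws make $'$ antitone, so $''$ is monotone, and from $a'\wedge b\le b$ we get $(a'\wedge b)''\le b''$.

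This does not obviously close, because we do not know $a''=a$. A cleaner route is to apply \eqref{W} with the complement pair swapped, comparing $d:=a'\wedge b$ with $e:=a'\wedge b$ in a different form. Failing that, I would use injectivity (Lemma~\ref{L:H1}) together with De Morgan to derive that $'''=\,'$, and hence that $''$ is the identity on the image of $'$. From that I would obtain $a'\wedge b\wedge(a''\vee b')\le (a''\vee b')'\wedge(a''\vee b')=0$ by showing $a'\wedge b$ lies below $(a''\vee b')'=a'''\wedge b''$ via $a'''=a'$.

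Showing $b\le b''$ is the crux. I would try to get it from \eqref{W} applied to the pair $(b,b'')$: the condition $b'\wedge b''=0$ holds by complementation, and $b\wedge b'''=b\wedge b'=0$ holds once $'''=\,'$ is known. So \eqref{W} gives $b=b''$, and the same argument gives $a=a''$. With $'$ involutive and antitone, $c'\wedge b=a'\wedge b\wedge(a\vee b')=0$ because $a'\wedge b\le (a\vee b')'$ and the two sides are complements.

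To get $'''=\,'$, apply \eqref{W} to $x=a'$ and $y=a'''$. The first condition $a''\wedge a'''=0$ holds by complementation. The second condition $a'\wedge a''''=0$ should follow by applying De Morgan to $a\wedge a'=0$, giving $a'\vee a''=0'$, and combining this with injectivity. I expect this small bootstrapping step to need the most care.
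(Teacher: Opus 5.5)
\textbf{There is a genuine gap in your argument for $\mathcal{W}_{DM}\subseteq\mathcal{O}$, and the fallback route you propose cannot work.} Your treatment of $\mathcal{M}\subseteq\mathcal{O}\cap\mathcal{O}^d$, of $c\wedge b'=0$, and of the final injectivity claim via Lemma~\ref{lem3} is fine.

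The step you call the main obstacle is immediate. By De Morgan's laws, $a''\vee b'=(a'\wedge b)'$, so
\[
c'\wedge b=a'\wedge(a'\wedge b)'\wedge b=(a'\wedge b)\wedge(a'\wedge b)'=0,
\]
simply because $(a'\wedge b)'$ is a complement of $a'\wedge b$. No distributivity and no information about $''$ is needed. This is exactly how the paper closes the argument, after which \eqref{equ1} gives $c=b$.

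Instead, you expand $(a'\wedge b)'$ and then try to prove that the complementation is involutive ($'''={}'$, $b=b''$, $a=a''$). These statements are false in $\mathcal{W}_{DM}$. In $\mathbf{M}_3'$ one has $a_0''=a_2\ne a_0$ and $a_0'''=a_0\ne a_1=a_0'$. Correspondingly, two things you hope to verify also fail there:
\begin{itemize}
\item the hypothesis $a'\wedge a''''=0$ in your bootstrapping step fails for $a=a_0$, since $a_1\wedge a_1=a_1$;
\item the intermediate inequality $a'\wedge b\le(a''\vee b')'$ fails for $a=0$, $b=a_0$, since $a_0\not\le a_2$.
\end{itemize}
So that part of the plan has to be discarded, not merely tightened.

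\textbf{A smaller point concerns $\mathcal{W}_{DM}\subseteq\mathcal{O}^d$.} It is not literally ``the dual argument'', because \eqref{W} is not a self-dual quasi-identity.

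The paper instead deduces it from weak orthomodularity. If $a\ge b$, then $a'\le b'$, hence $(a\wedge(a'\vee b))'=a'\vee(a''\wedge b')=b'$ by \eqref{woml}. Injectivity (Lemma~\ref{L:H1}) then yields $a\wedge(a'\vee b)=b$.

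Alternatively, you could first check that the dual of \eqref{W} holds in $\mathcal{W}_{DM}$. If $x'\vee y=1=x\vee y'$, De Morgan's laws give $x''\wedge y'=0=x'\wedge y''$. Then $x'=y'$ by \eqref{W}, and $x=y$ by injectivity. Only after that does the dual argument go through.
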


However, it should be noted that in a modular lattice with complementation that does not satisfy De Morgan's laws, the complementation need not be injective. A simple example, already mentioned in Section~\ref{SEC1}, is the lattice $\mathbf{M}_3$ with complementation $a_0\mapsto a_1$ and $a_1\leftrightarrow a_2$.
On the other hand, there exist modular lattices with injective (or even involutive) complementation not satisfying De Morgan's laws (see Examples~\ref{EXA2}, \ref{EXA3} or \ref{EXA4}).

\begin{proof}
Let $\mathbf{L}\in\mathcal{W}_{DM}$. If $a\le b$ for some $a,b\in L$, then 
$(a\vee (a'\wedge b))\wedge b'\le (b\vee (a'\wedge b))\wedge b'=b\wedge b'=0$. Also,
$(a\vee (a'\wedge b))'\wedge b=a'\wedge (a'\wedge b)'\wedge b=0$.
Thus $(a\vee (a'\wedge b))+b=0$, which implies $a\vee (a'\wedge b)=b$ by \eqref{equ1}, proving that $\mathbf{L}$ is weakly orthomodular.

As for dual weak orthomodularity, if $a\ge b$, then $a'\le b'$, which yields
$(a\wedge (a'\vee b))'=a'\vee (a''\wedge b')=b'$ by \eqref{woml}.
Hence, since the complementation is injective by Lemma~\ref{lem3}, we have $a\wedge (a'\vee b)=b$, proving that $\mathbf{L}$ is dually weakly orthomodular.

As noted above, $\mathcal{M}\subseteq\mathcal{O}\cap\mathcal{O}^d$. Thus, if $\mathbf{L}\in\mathcal{M}_{DM}$, then the complementation is injective by Lemma~\ref{lem3}.
\end{proof}

\begin{lemma}\label{L:89}
Every $\mathbf{L}\in (\mathcal{O}\cap\mathcal{O}^d)_{DM}$ satisfies the quasi-identities 
\begin{gather}
\label{woml+}
x'\le y \;\Rightarrow\; x'\vee (x\wedge y)\approx y,\\
\label{dwoml+}
x'\ge y \;\Rightarrow\; x'\wedge (x\vee y)\approx y.
\end{gather}
In particular, every 
$\mathbf{L}\in (\mathcal{W}\cup\mathcal{M})_{DM}$ satisfies these quasi-identities.
\end{lemma}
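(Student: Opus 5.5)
We prove \eqref{woml+} and \eqref{dwoml+} by the device used in Lemma~\ref{lem3a}: compute the complement of the left-hand side with De Morgan's laws and then cancel the complementation, which is injective by Lemma~\ref{lem3} since $\mathbf{L}\in(\mathcal{O}\cap\mathcal{O}^d)_{DM}$. Two preliminary facts are needed. First, De Morgan's laws make $'$ order-reversing: $a\le b$ gives $a'=(a\wedge b)'=a'\vee b'$, hence $b'\le a'$. Second, we want $x''$ expressed through $x$; we will not use $x''=x$ directly, since involutivity is not yet available.

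For \eqref{woml+}, let $x'\le y$ and put $t:=x'\vee(x\wedge y)$. By De Morgan's laws,
\[
t'=x''\wedge(x\wedge y)'=x''\wedge(x'\vee y').
\]
Since $x'\le y$, antitonicity gives $y'\le x''$. Now apply \eqref{dwoml} with the roles $x\mapsto x''$ and $y\mapsto y'$; here the hypothesis $x''\ge y'$ holds, and \eqref{dwoml} yields $x''\wedge(x'''\vee y')=y'$. This matches $t'$ only if $x'''=x'$. So the cleaner route is to first establish that the complementation is involutive:
\begin{itemize}
\item[] from $x\wedge x'=0$ we get $x''\vee x'=1$, i.e.\ $(x''\vee x')'=0$;
\item[] alternatively, apply \eqref{woml} to the pair $x'\wedge x''\le$ anything.
\end{itemize}
The most direct argument for $x''=x$ is this. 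Apply \eqref{woml-} with $y=x''$:
\[
x\vee\big((x\wedge x'')'\wedge x''\big)=x\vee x''.
\]
Hence $(x\wedge x'')'=x'\vee x'''$. Rather than push this further, we use injectivity: $x'$ and $x'''$ are both complements of $x''$, and we show $(x\vee x'')'=(x\wedge x'')'$, which forces $x\vee x''=x\wedge x''$ and so $x=x''$. This is the step I expect to be the main obstacle. The computation starts from $(x\vee x'')'=x'\wedge x'''$ and $(x\wedge x'')'=x'\vee x'''$, so the task reduces to showing $x'\wedge x'''=x'\vee x'''$. For that I would apply \eqref{woml} to $x'\wedge x'''\le x'\vee x'''$; this gives
\[
x'\vee x'''=(x'\wedge x''')\vee\big((x''\vee x'''')\wedge(x'\vee x''')\big),
\]
and then use that $x''\vee x''''$ is a complement-like element. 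If this symbolic route stalls, the fallback is to avoid involutivity altogether.

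The fallback runs as follows. We have $t'=x''\wedge(x'\vee y')$ with $y'\le x''$. Let $u:=x''\wedge(x'\vee y')$; note $u\ge y'$. Then $u\wedge x'\le x''\wedge x'=0$, while $u'=x'''\vee(x''\wedge y)$. We aim to show $u=y'$ via \eqref{dwoml} applied to the pair $x'\vee y'\ge y'$, together with modularity-free manipulations, and then conclude $t=y$ by injectivity.

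Once \eqref{woml+} is in hand, \eqref{dwoml+} follows by the order-dual argument, exchanging $\vee$ and $\wedge$ and replacing \eqref{woml}/\eqref{dwoml} by \eqref{dwoml}/\eqref{woml}. Both the hypotheses and De Morgan's laws are self-dual, so the dual computation goes through verbatim.

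The final sentence of the lemma then follows from Lemma~\ref{lem3a}: it gives $\mathcal{W}_{DM}\subseteq\mathcal{O}\cap\mathcal{O}^d$ and $\mathcal{M}\subseteq\mathcal{O}\cap\mathcal{O}^d$, hence $(\mathcal{W}\cup\mathcal{M})_{DM}\subseteq(\mathcal{O}\cap\mathcal{O}^d)_{DM}$.
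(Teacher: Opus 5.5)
There is a genuine gap: neither of your two routes actually establishes \eqref{woml+}.

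\textbf{The involutivity route.} You try to show that the complementation is involutive, but that is false in $(\mathcal{O}\cap\mathcal{O}^d)_{DM}$. The algebra $\mathbf{M}_3'$ lies in $\mathcal{M}_{DM}\subseteq(\mathcal{O}\cap\mathcal{O}^d)_{DM}$, yet $a_0''=a_2\neq a_0$. The intermediate equality you planned to prove also fails there: $(a_0\vee a_0'')'=1'=0$ while $(a_0\wedge a_0'')'=0'=1$. In fact, by Lemma~\ref{L:neutral}, in $\mathcal{W}_{DM}$ the condition $a''=a$ holds only for neutral elements, so no argument along these lines can succeed.

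\textbf{The fallback.} Via injectivity, it reduces the claim to showing $x''\wedge(x'\vee y')=y'$ under $y'\le x''$. Put $z:=x'$; this becomes $z'\wedge(z\vee y')=y'$ under $y'\le z'$, which is exactly \eqref{dwoml+} for the pair $(x',y')$. You intend to get \eqref{dwoml+} by the order-dual argument, and that dual argument would in turn reduce to \eqref{woml+}. So the two fallback arguments only show that \eqref{woml+} and \eqref{dwoml+} are interderivable given injectivity; they prove neither. Your appeal to \eqref{dwoml} for $x'\vee y'\ge y'$ is not carried out and does not by itself give the needed equality. Applying \eqref{dwoml} to $x''\ge y'$ also fails, as you noticed, because it produces $x'''$ rather than $x'$.

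\textbf{The missing idea.} Apply \eqref{dwoml} with the larger element $y$ itself, rather than $x''$, and do not complement the left-hand side at all. From $y\ge x'$, \eqref{dwoml} and De Morgan's laws give
\[
x'=y\wedge(y'\vee x')=y\wedge(x\wedge y)'.
\]
Then \eqref{woml} applied to $x\wedge y\le y$ gives
\[
y=(x\wedge y)\vee\big((x\wedge y)'\wedge y\big)=(x\wedge y)\vee x'.
\]
This is the paper's two-line proof. It needs no injectivity and no antitonicity, and \eqref{dwoml+} follows dually. Your reduction of the ``in particular'' clause to Lemma~\ref{lem3a} is correct.
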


\begin{proof}
Let $\mathbf{L}\in (\mathcal{O}\cap\mathcal{O}^d)_{DM}$ and suppose that $a'\le b$ for some $a,b\in L$. Then 
$a'=b\wedge (b'\vee a')=(a\wedge b)'\wedge b$ 
by dual weak orthomodularity \eqref{dwoml}. At the same time, 
$b=(a\wedge b)\vee ((a\wedge b)'\wedge b)$
by weak orthomodularity \eqref{woml}. Hence 
$b=(a\wedge b)\vee a'$, as desired.
Dually, $a'\ge b$ implies $b=(a\vee b)\wedge a'$.
The final statement follows from Lemma~\ref{lem3a}.
\end{proof}

Recall that, in a lattice $(L,\vee,\wedge)$, an element $a\in L$ is said to be
\begin{itemize}
\item
\emph{distributive} if 
$a\vee (x\wedge y)=(a\vee x)\wedge (a\vee y)$
for all $x,y\in L$,
\item
\emph{standard} if 
$x\wedge (a\vee y)=(x\wedge a)\vee (x\wedge y)$
for all $x,y\in L$, and
\item
\emph{neutral} if 
$(a\vee x)\wedge (a\vee y)\wedge (x\vee y)=
(a\wedge x)\vee (a\wedge y)\vee (x\wedge y)$
for all $x,y\in L$. 
\end{itemize}
\emph{Dually distributive} and \emph{dually standard} elements are defined dually, while being neutral is obviously self-dual.
See, e.g., \cite[Chapter III]{G11}.

In general, the relationships between the three (or five) types of elements are as follows:
neutral $\Rightarrow$ (dually) standard $\Rightarrow$ (dually) distributive.
In modular lattices, all these notions coincide, and in complemented modular lattices, neutral elements are those which have unique complements. 

In complemented lattices, neutral elements correspond to direct product decompositions.
Specifically, given a complemented lattice $\mathbf{L}=(L,\vee,\wedge,0,1)$, an element $e\in L$ is neutral if and only if $\mathbf{L}$ is isomorphic to the direct product of 
$\mathbf{[0,e]}=([0,e],\vee,\wedge,0,e)$ and
$\mathbf{[e,1]}=([e,1],\vee,\wedge,e,1)$;
the isomorphism in question is defined simply by $\eta\colon x\mapsto (x\wedge e,x\vee e)$.
We now extend this to lattices with (De Morgan) complementation.

\begin{lemma}\label{L:neutral=central}
Let $\mathbf{L}=(L,\vee,\wedge,{}',0,1)$ be a lattice with complementation. Let $e\in L$ be a neutral element and put
\[
x^\flat:=x'\wedge e \text{ for } x\in [0,e]
\quad\text{and}\quad
x^\sharp:=x'\vee e \text{ for } x\in [e,1].
\]
Then 
\begin{enumerate}[\rm(i)]
\item
$\mathbf{[0,e]}=([0,e],\vee,\wedge,{}^\flat,0,e)$ and
$\mathbf{[e,1]}=([e,1],\vee,\wedge,{}^\sharp,e,1)$ 
are lattices with complementation;
\item
if $\mathbf{L}$ satisfies De Morgan's laws, then $\mathbf{L}$ is isomorphic to $\mathbf{[0,e]}\times\mathbf{[e,1]}$;
\item
if $\mathbf{L}$ belongs to $\mathcal{W}$, then so do $\mathbf{[0,e]}$ and $\mathbf{[e,1]}$.
\end{enumerate}
\end{lemma}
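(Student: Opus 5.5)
Throughout, I will use the isomorphism $\eta\colon x\mapsto(x\wedge e,x\vee e)$ of the underlying bounded lattices $\mathbf{L}\cong\mathbf{[0,e]}\times\mathbf{[e,1]}$. Neutrality of $e$ gives distributive computations such as $e\wedge(x\vee y)=(e\wedge x)\vee(e\wedge y)$ whenever one of the three elements is $e$ itself.

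For (i), take $x\in[0,e]$. Then $x\wedge x^\flat=x\wedge x'\wedge e=0$. For the join we use that $e$ is standard: $x\vee x^\flat=x\vee(x'\wedge e)=(x\wedge e)\vee(x'\wedge e)=e\wedge(x\vee x')=e$, where the middle equality is the dually standard property (distributivity of meets with $e$ over joins). The case $x\in[e,1]$ is dual: $x\vee x^\sharp=1$, and $x\wedge(x'\vee e)=(x\wedge e)\vee(x\wedge x')$ by standardness, which equals $e$.

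For (ii), it suffices to show that $\eta(x')=(x'\wedge e,x'\vee e)$ equals $\bigl((x\wedge e)^\flat,(x\vee e)^\sharp\bigr)$. By De Morgan, $(x\wedge e)^\flat=(x'\vee e')\wedge e$. Neutrality gives $(x'\wedge e)\vee(e'\wedge e)=x'\wedge e$, so $(x\wedge e)^\flat=x'\wedge e$. Dually, $(x\vee e)^\sharp=(x'\wedge e')\vee e=(x'\vee e)\wedge(e'\vee e)=x'\vee e$. Hence $\eta$ preserves complementation, and being a lattice isomorphism, it is an isomorphism of lattices with complementation.

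For (iii), there is no De Morgan law, so $\eta$ need not respect $'$, and we argue inside $\mathbf{L}$ directly. Suppose $x,y\in[0,e]$ with $x^\flat\wedge y=0$ and $x\wedge y^\flat=0$. Since $y\le e$, $x^\flat\wedge y=x'\wedge e\wedge y=x'\wedge y$, and similarly $x\wedge y^\flat=x\wedge y'$. So both are $0$ in $\mathbf{L}$, and \eqref{W} gives $x=y$.

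For $[e,1]$ the meets do not simplify the same way, and this is the main obstacle. Take $x,y\ge e$ with $(x'\vee e)\wedge y=e$ and $x\wedge(y'\vee e)=e$. By standardness, $(x'\vee e)\wedge y=(x'\wedge y)\vee(e\wedge y)=(x'\wedge y)\vee e$, so $x'\wedge y\le e$. Since also $x'\wedge y\le x'$ and $x'\wedge e\le x'\wedge x=0$, we get $x'\wedge y=0$. Symmetrically, $x\wedge y'=0$, and \eqref{W} yields $x=y$.

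The delicate points are thus the uses of (dual) standardness of the neutral element $e$, which hold in any lattice, so no modularity is needed.
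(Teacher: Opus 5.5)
Your proposal is correct and follows the paper's proof essentially step for step: the same componentwise check of $\flat$ and $\sharp$ in (i), the same isomorphism $\eta$ combined with De Morgan's laws and (dual) distributivity of $e$ in (ii), and the same reduction of \eqref{W} in the intervals to \eqref{W} in $\mathbf{L}$ in (iii). The only deviations are harmless: in (i), the identity $e\wedge(x\vee x')=(e\wedge x)\vee(e\wedge x')$ you invoke is dual distributivity rather than dual standardness (neutral elements have both), and in the $[e,1]$ case of (iii) the paper does not need standardness at all, since $x'\le x^\sharp$ gives $x'\wedge y=x'\wedge x^\sharp\wedge y=x'\wedge e\le x'\wedge x=0$ directly.
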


\begin{proof}
(i) For any $x\in [0,e]$, we have 
$x\wedge x^\flat=x\wedge x'\wedge e=0$ and 
$x\vee x^\flat=x\vee (x'\wedge e)=e$, since $e$ is neutral (dually standard).
Analogously, for any $x\in [e,1]$,
$x\wedge x^\sharp=x\wedge (x'\vee e)=e$, since $e$ is neutral (standard) and
$x\vee x^\sharp=x\vee x'\vee e=1$.

(ii) Suppose that $\mathbf{L}$ satisfies De Morgan's laws.
Then the mapping $\eta\colon x\mapsto (x\wedge e,x\vee e)$ is a bounded lattice isomorphism which preserves complementation thanks to De Morgan's laws. Indeed, for any $x\in L$, we have
$(x\wedge e)^\flat=(x\wedge e)'\wedge e
=(x'\vee e')\wedge e=x'\wedge e$,
since $e$ is neutral (dually distributive), and
$(x\vee e)^\sharp=(x\vee e)'\vee e
=(x'\wedge e')\vee e=x'\vee e$,
since $e$ is neutral (distributive).
Hence
$(\eta(x))'=((x\wedge e)^\flat,(x\vee e)^\sharp)=
(x'\wedge e,x'\vee e)=\eta(x')$.

(iii) Suppose $\mathbf{L}\in\mathcal{W}$.
For all $x,y\in [0,e]$, we have $x^\flat\wedge y=x'\wedge e\wedge y=x'\wedge y$. Hence $(x^\flat\wedge y)\vee (x\wedge y^\flat)=0$ in $\mathbf{[0,e]}$ is the same as $x+y=0$ in $\mathbf{L}$, whence $x=y$. Thus $\mathbf{[0,e]}\in\mathcal{W}$.

Now, let $x,y\in [e,1]$. If $x^\sharp\wedge y=e$, then 
$x'\wedge y=x'\wedge x^\sharp\wedge y=x'\wedge e\le x'\wedge x=0$.
Hence $(x^\sharp\wedge y)\vee (x\wedge y^\sharp)=e$ in $\mathbf{[e,1]}$ implies $x+y=0$ in $\mathbf{L}$, which yields $x=y$. Thus $\mathbf{[e,1]}\in\mathcal{W}$.
\end{proof}

For example, for the lattice with complementation $\mathbf{H}_1\in\mathcal{M}_{DM}$ from Example~\ref{EXA2} we have
$\mathbf{H}_1\cong\mathbf{[0,e]}\times\mathbf{[e,1]}
\cong\mathbf{2}\times\mathbf{M}_3'$
and $\mathbf{H}_1\cong\mathbf{[0,d]}\times\mathbf{[d,1]}
\cong\mathbf{M}_3'\times\mathbf{2}$.
The elements $d$ and $e$ are neutral because they have unique complements, 
or in other words, because they determine direct product decompositions of the lattice;
see Figures~\ref{F3} and \ref{F6}.

\begin{lemma}\label{L:45}
Let $\mathbf{L}\in (\mathcal{O}\cup\mathcal{O}^d)_{DM}$. 
If an element $a\in L$ has a unique complement, then $a''=a$.
\end{lemma}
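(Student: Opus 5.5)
The plan is to show that $a''$ is also a complement of $a$; uniqueness of the complement then gives $a''=a'$. That is not the target identity, so the argument has to be arranged slightly differently. We know $a'$ is a complement of $a$, and by uniqueness every complement of $a$ equals $a'$. The aim is to show that $a''$ is a complement of $a'$ and that $a$ is the only complement of $a'$. The first fact holds by definition, so the real content is uniqueness of complements of $a'$, or some equivalent route.

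A cleaner route uses injectivity of complementation, which holds by Lemma~\ref{lem3} since $\mathbf{L}\in(\mathcal{O}\cup\mathcal{O}^d)_{DM}$. So it suffices to prove $a'''=a'$. Both $a'''$ and $a'$ are complements of $a''$, so I would instead work directly with $a$ and $a''$ using De Morgan's laws. The key observation is that $a\vee a''$ and $a\wedge a''$ are complements of $a'$ in a suitable sense. By De Morgan, $(a\vee a'')'=a'\wedge a'''$, and $a'\wedge a''=0$.

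The concrete plan is to produce from $a''$ a complement of $a$ and compare. In the weakly orthomodular case I consider $c:=a''\vee (a\wedge \dots)$, but the more natural candidate is $b:=(a\wedge a'')\vee\big((a\wedge a'')'\wedge a''\big)$, which equals $a''$ by \eqref{woml}. A simpler attempt is to show directly that $a''\wedge a=0$ forces things. Let me restructure around this.

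Step 1. By De Morgan, $(a\wedge a'')'=a'\vee a'''$ and $(a\vee a'')'=a'\wedge a'''$.

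Step 2 (case $\mathcal{O}$). Put $x:=a\wedge a''\le a''$. Then weak orthomodularity gives $a''=x\vee(x'\wedge a'')=x\vee((a'\vee a''')\wedge a'')$. Next I show $x$ is a complement-related element, namely that $x\vee a'$ and $x\wedge a'$ behave well, and ultimately that $x=0$ or $x=a$.

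Step 3. Instead, consider $d:=a\wedge(a'\vee a'')$ and $d^\ast:=a\vee(a'\wedge a'')$, and check which of them is a complement of $a'$. The intended mechanism is to manufacture a second complement of $a$ out of $a''$ (for instance $a'\wedge(a\vee a''')$, or $(a'\vee a'')\wedge a'''$-type expressions), show it is a complement of $a$, and conclude it equals $a'$. Applying $'$ and using injectivity then recovers $a''=a$.

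The main obstacle is finding the right candidate complement. My first try would be $y:=(a\vee a'')'\vee\big(a'\wedge\ldots\big)$. More precisely, in case $\mathcal{O}$ I would use \eqref{woml-} with $x=a'$, $y=a''$: $a'\vee((a'\wedge a'')'\wedge a'')=a'\vee a''=1$. Since $a'\wedge a''=0$, its complement $0'=1$ gives nothing, so I would instead use \eqref{woml-} with $x=a$ and $y=a'''$, and look for the expression $u:=(a\wedge a''')'\wedge a'''$ satisfying $a\vee u=a\vee a'''$. I would then show $a\wedge a'''\in\{0\}$ via De Morgan ($(a\wedge a''')'=a'\vee a''''$), making $a'''$ a complement of $a$, hence $a'''=a'$, hence $a''=a$ by injectivity. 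The dual case $\mathcal{O}^d$ is handled by the dual argument with \eqref{dwoml-}.
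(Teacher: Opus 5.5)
There is a genuine gap. The decisive step is asserted, not proved.

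You have the right mechanism. For any $y$, \eqref{woml-} makes $u:=(a\wedge y)'\wedge y$ satisfy $a\wedge u=0$ and $a\vee u=a\vee y$. So $u$ is a complement of $a$ exactly when $a\vee y=1$.

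With your choice $y=a'''$ you still need $a\vee a'''=1$, and you never address it. Your claim that $a\wedge a'''=0$ follows ``via De Morgan'' is also unfounded. De Morgan and injectivity only turn it into $a'\vee a''''=1$, which is no easier. Moreover, neither $a\wedge a'''=0$ nor $a\vee a'''=1$ holds in $(\mathcal{O}\cap\mathcal{O}^d)_{DM}$ without the uniqueness hypothesis: in $\mathbf{M}_3'$ one has $a_0'''=a_0$. Any argument for these must use uniqueness of the complement, and yours does not. So the entire content of the lemma sits in the unproved step.

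The rest of your chain would work if you had $a\vee a'''=1$. Then $u=a'$ gives $a'\le a'''$, \eqref{woml} gives $a'''=a'\vee(a''\wedge a''')=a'$, and injectivity gives $a''=a$.

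The missing idea is to choose $y$ so that $a\vee y=1$ is automatic. The paper takes $b:=(a\vee a'')'\vee a''$.
\begin{enumerate}
\item Since $a\vee b=1$ trivially, $(a\wedge b)'\wedge b$ is a complement of $a$, and uniqueness gives $a'=(a\wedge b)'\wedge b$.
\item De Morgan and \eqref{woml-} give $(a\wedge b)'=a'\vee\big((a'\wedge a''')'\wedge a'''\big)=a'\vee a'''=(a\wedge a'')'$.
\item Since $b\ge a''$ and $a'\le(a\wedge b)'$, \eqref{woml} gives $a'=a'\vee\big((a\wedge b)'\wedge b\big)\ge a'\vee\big((a\wedge b)'\wedge a''\big)=(a\wedge b)'$.
\item Hence $a'=(a\wedge a'')'$, so injectivity (Lemma~\ref{lem3}) gives $a\le a''$, and \eqref{woml} gives $a=a\vee(a'\wedge a'')=a''$.
\end{enumerate}
Your remark that the $\mathcal{O}^d$ case is dual is fine, since the hypotheses and the conclusion are self-dual.
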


Note that the converse implication fails, e.g., in horizontal sums of (four-element) Boolean algebras.

\begin{proof}
Suppose $a\in L$ has a unique complement.
Let $b:=(a\vee a'')'\vee a''$ and note that $a\vee b=1$.
First, we have
$a\vee ((a\wedge b)'\wedge b)=a\vee b=1$
by weak orthomodularity \eqref{woml-}.
Since also $a\wedge (a\wedge b)'\wedge b=0$, the element 
$(a\wedge b)'\wedge b$ is a complement of $a$, and so
$a'=(a\wedge b)'\wedge b$.
Second, we have 
\[
(a\wedge b)'=
a'\vee \big((a\vee a'')'\vee a''\big)'=
a'\vee \big((a'\wedge a''')'\wedge a'''\big)=
a'\vee a'''=
(a\wedge a'')'
\]
again by \eqref{woml-}. Then
\[
a'=(a\wedge b)'\wedge b=
a'\vee \big((a\wedge b)'\wedge b\big)\ge
a'\vee \big((a\wedge b)'\wedge a''\big)=
(a\wedge b)'
\]
by weak orthomodularity \eqref{woml}, because $a'\le (a\wedge b)'$ owing to De Morgan's laws. This shows $a'=(a\wedge b)'=(a\wedge a'')'$, whence $a=a\wedge a''$. Now, by \eqref{woml}, $a\le a''$ implies $a=a\vee (a'\wedge a'')=a''$, as desired.
\end{proof}

\begin{lemma}\label{L:neutral}
Suppose that either 
{\rm(a)} $\mathbf{L}\in\mathcal{W}_{DM}$, or 
{\rm(b)} $\mathbf{L}\in\mathcal{M}_{DM}$ and satisfies identity \eqref{M31}.
Then the following are equivalent for every $a\in L$:
\begin{enumerate}[\rm(i)]
\item
the element $a$ is neutral;
\item
the element $a$ has a unique complement (in other words, $a'$ is the only complement of $a$);
\item
$a''=a$.
\end{enumerate}
\end{lemma}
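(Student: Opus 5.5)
We prove (i)$\Rightarrow$(ii)$\Rightarrow$(iii)$\Rightarrow$(ii)$\Rightarrow$(i). The step (iii)$\Rightarrow$(ii) is handled separately in the two cases (a) and (b), and the step (ii)$\Rightarrow$(i) needs a real argument only in case (a).

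\emph{(i)$\Rightarrow$(ii).} This is the general lattice-theoretic fact that a neutral element of a bounded lattice has at most one complement. If $b,c$ are complements of a neutral $a$, then by distributivity of $a$ and its dual
\[
b=b\wedge(a\vee c)=(b\wedge a)\vee(b\wedge c)=b\wedge c,
\]
and symmetrically $c=b\wedge c$, so $b=c$.

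\emph{(ii)$\Rightarrow$(iii).} By Lemma~\ref{lem3a}, in both cases (a) and (b) we have $\mathbf{L}\in(\mathcal{O}\cap\mathcal{O}^d)_{DM}$. Hence Lemma~\ref{L:45} applies directly.

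\emph{(iii)$\Rightarrow$(ii) in case (a).} This follows from \eqref{equ1}. Let $b$ be any complement of $a$, where $a''=a$. By De Morgan's laws,
\[
b+a'=(b'\wedge a')\vee(b\wedge a'')=(a\vee b)'\vee(a\wedge b)=1'\vee 0=0,
\]
so $b=a'$.

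\emph{(iii)$\Rightarrow$(ii) in case (b).} We use modularity together with \eqref{M31}. Let $b$ be a complement of $a$. By De Morgan's laws, $b'$ is a complement of $a'$, and $a'\vee b'=(a\wedge b)'=1$. Substituting $u=b$, $x=a$, $y=a'$, $z=b'$ into \eqref{M31} gives
\[
b\wedge(a\vee b')\le b\wedge a'.
\]
The symmetric substitutions (interchanging the roles of $a$ and $a'$, and of $b$ and $b'$, using $a''=a$) give companion inequalities. The aim is to combine these, via modularity, to get $b\le a'$ or $a'\le b$. Two comparable complements of the same element in a modular lattice coincide, which would give $b=a'$.

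I expect the main obstacle to be showing that the left-hand side $b\wedge(a\vee b')$ is large enough, i.e.\ $b\le a\vee b'$, or an analogous statement. My first attempt is to use weak orthomodularity \eqref{woml} and its consequences \eqref{woml+}, \eqref{dwoml+} from Lemma~\ref{L:89} to rewrite $a\vee b'=(a'\wedge b)'$. If that fails, I would pass to the $\{0,1\}$-sublattice generated by $a,a',b,b'$ and argue that $b\ne a'$ would produce a copy of $\mathbf{M}_4$, contradicting \eqref{M31}.

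\emph{(ii)$\Rightarrow$(i) in case (b).} This is the classical fact recalled above: in complemented modular lattices, neutral elements are exactly those with unique complements.

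\emph{(ii)$\Rightarrow$(i) in case (a).} Here we may assume $a''=a$ and that $a'$ is the unique complement of $a$. I would prove that $a$ is standard and dually standard, using \eqref{equ1} as the tool for equalities: to show $p=q$, verify $p+q=0$. For example, put $p=x\wedge(a\vee y)$ and $q=(x\wedge a)\vee(x\wedge y)$. Then $q\le p$ gives $p'\wedge q=0$ by De Morgan's laws. The remaining term $p\wedge q'$ should be reduced to $0$ using $a''=a$ and the relative complementations from Lemma~\ref{L:89}, e.g.\ $a'\vee(a\wedge y)$ for $a'\le y$.
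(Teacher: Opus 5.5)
Your steps (i)$\Rightarrow$(ii), (ii)$\Rightarrow$(iii), and (ii)$\Rightarrow$(i) in case (b) are fine. Your direct proof of (iii)$\Rightarrow$(ii) in case (a), via $b+a'=(a\vee b)'\vee(a\wedge b)=0$, is correct; the paper never proves that implication directly but goes through (i).

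\textbf{Case (a), (ii)$\Rightarrow$(i).} This is a genuine gap, and it is where essentially all the work of the lemma lies. You only say that $p\wedge q'$ ``should be reduced to $0$'' and give no argument. Carrying the unique-complement hypothesis does not help by itself, because the classical link between unique complements and neutrality uses modularity, which you do not have in case (a). The paper proves (iii)$\Rightarrow$(i) through a claim: if $e''=e$, then $e$ is distributive and $e'$ is dually distributive. The argument runs as follows.
\begin{enumerate}[\rm(1)]
\item The key identity is $e\vee(e'\wedge x)=e\vee x$. Put $y:=(e\vee x')\wedge x$. Using \eqref{dwoml-} twice (together with $e''=e$), show $e+\big((e\wedge x')\vee y\big)=0$, hence $e=(e\wedge x')\vee y$ by \eqref{equ1}.
\item Since $y=(e'\wedge x)'\wedge x$, \eqref{woml} gives $(e'\wedge x)\vee y=x$. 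Combining this with (1) yields the key identity.
\item De Morgan's laws and injectivity then give $e'\wedge(e\vee x)=e'\wedge x$. From the two identities, $e'$ is dually distributive, and, by De Morgan's laws and injectivity again, $e$ is distributive.
\item Applying the claim to $a$ and to $a'$ makes $a$ both distributive and dually distributive.
\item Cancellation comes again from \eqref{equ1}. If $a\vee x=a\vee z$ and $a\wedge x=a\wedge z$, distributivity of $a$ gives $x\wedge z'\le a$. Hence $x\wedge z'=a\wedge x\wedge z'=a\wedge z\wedge z'=0$, and symmetrically $x'\wedge z=0$.
\end{enumerate}

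\textbf{Case (b), (iii)$\Rightarrow$(ii).} You leave this unfinished, and the obstacle you name is a false target. For the complement $b=a'$ itself, $b\le a\vee b'$ reads $a'\le a$, which fails unless $a=1$. You need no such inclusion, and the $\mathbf{M}_4$ fallback is unnecessary; your own rewrite closes the argument.
\begin{enumerate}[\rm(1)]
\item De Morgan's laws and $a''=a$ give $a\vee b'=(a'\wedge b)'$.
\item Your instance of \eqref{M31} therefore reads $b\wedge(a'\wedge b)'\le a'\wedge b$. The left side lies below both $a'\wedge b$ and its complement, so it is $0$.
\item Modularity gives $b=(a'\wedge b)\vee\big(b\wedge(a'\wedge b)'\big)=a'\wedge b$, i.e.\ $b\le a'$, whence $b=(b\vee a)\wedge a'=a'$.
\end{enumerate}
This is the mirror image of the paper's argument. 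The paper instantiates \eqref{M31} with $u=a$, $x=a'$, $y=b$, $z=b'$ to get $a\wedge(a'\vee b)\le a\wedge b'=(a'\vee b)'$. Hence $a\wedge(a'\vee b)=0$ and, by modularity, $a'=a'\vee b$.

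Alternatively, Lemma~\ref{L:A1} in the appendix places $\mathbf{L}$ in $\mathcal{W}_{DM}$, and its proof does not depend on this lemma. Your case (a) argument for (iii)$\Rightarrow$(ii) then applies verbatim.
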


We will see later that (b) is in fact a particular case of (a), because when $\mathbf{L}\in\mathcal{M}_{DM}$ satisfies \eqref{M31}, then $\mathbf{L}\in\mathsf{V}(\mathbf{M}_3')\subseteq\mathcal{W}_{DM}$ (see Theorem~\ref{T:M3axio} and/or Lemma~\ref{L:A1} in the appendix).

\begin{proof}
(a) Let $\mathbf{L}\in\mathcal{W}_{DM}$. 
By Lemma~\ref{lem3a} we know that $\mathbf{L}\in\mathcal{O}\cap\mathcal{O}^d$.
In complemented lattices, complements of neutral elements are also neutral, and neutral elements have unique complements.
Hence, if $a\in L$ is neutral, then $a'$ is neutral as well and therefore has a unique complement, while both $a''$ and $a$ are complements of $a'$, and so $a''=a$. 
Thus, (i) implies both (ii) and (iii), but also (ii) implies (iii) by Lemma~\ref{L:45}.

It remains to show that (iii) implies (i). For this direction, we first prove the following:

\begin{claim}
For every $e\in L$, if $e''=e$, then $e'$ is dually distributive and $e$ is distributive.
\end{claim}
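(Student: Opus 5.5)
The plan is to prove the two assertions in reverse order of difficulty: first derive the distributivity of $e$ from the dual distributivity of $e'$ using De Morgan's laws and injectivity, and then establish the dual distributivity of $e'$ via the quasi-identity \eqref{equ1}. Throughout, $\mathbf{L}\in\mathcal{W}_{DM}$. So $\mathbf{L}\in\mathcal{O}\cap\mathcal{O}^d$ by Lemma~\ref{lem3a}, the complementation is injective by Lemma~\ref{lem3}, and the quasi-identities \eqref{woml+} and \eqref{dwoml+} of Lemma~\ref{L:89} are available.

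\emph{Reduction.} Suppose $e'$ is dually distributive, i.e.\ $e'\wedge(s\vee t)=(e'\wedge s)\vee(e'\wedge t)$ for all $s,t$. For $x,y\in L$, De Morgan's laws and $e''=e$ give
\[
\big(e\vee(x\wedge y)\big)'=e'\wedge(x'\vee y')
\quad\text{and}\quad
\big((e\vee x)\wedge(e\vee y)\big)'=(e'\wedge x')\vee(e'\wedge y').
\]
By dual distributivity of $e'$ (with $s=x'$, $t=y'$) these two complements coincide. Injectivity then yields $e\vee(x\wedge y)=(e\vee x)\wedge(e\vee y)$, so $e$ is distributive. It therefore remains to prove that $e'$ is dually distributive.

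\emph{Main step.} Fix $x,y$ and put $u:=e'\wedge(x\vee y)$ and $v:=(e'\wedge x)\vee(e'\wedge y)$. Then $v\le u$, so $u'\wedge v\le u'\wedge u=0$. By \eqref{equ1}, to get $u=v$ it suffices to show $u\wedge v'=0$. (Equivalently, by \eqref{woml}, $u=v\vee(v'\wedge u)$, so it is enough to kill $w:=u\wedge v'$.) By De Morgan's laws and $e''=e$, $v'=(e\vee x')\wedge(e\vee y')$, hence
\[
w=e'\wedge(x\vee y)\wedge(e\vee x')\wedge(e\vee y').
\]
Two facts are immediate: $w\le e'$, and $w\wedge x\le (e'\wedge x)\wedge v'\le v\wedge v'=0$; similarly $w\wedge y=0$.

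To conclude $w=0$, I would use \eqref{dwoml+} with $e$ in place of $x$ (legitimate since $e'\ge w$). This gives $e'\wedge(e\vee w)=w$, and similarly $e'\wedge(e\vee z)=z$ for every $z\le e'$. The idea is to transport the problem into the interval above $e$, where $w$ corresponds to $e\vee w\le e\vee x'$, and play this against $w\le x\vee y$ together with $w\wedge x=w\wedge y=0$. Concretely, I would try to show that $w$ and $e'\wedge x$ (or $w\vee(e'\wedge x)$ and $e'\wedge x$) satisfy the hypothesis of \eqref{W}, using \eqref{woml+} and \eqref{dwoml+} to simplify the meets $(\cdot)'\wedge(\cdot)$ that arise. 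From \eqref{W} one would get $w\le e'\wedge x$, and combined with $w\wedge x=0$ this forces $w=0$.

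This last computation is the expected obstacle, since it is the only place where \eqref{W} must genuinely interact with the hypothesis $e''=e$ beyond routine De Morgan manipulations. If the pairing with $e'\wedge x$ does not close, the fallback is to apply \eqref{equ1} directly to the pair $(v,u)$ after rewriting $u$ via \eqref{woml+} as $e\vee(e'\wedge u)$ inside $[e,1]$-style expressions.
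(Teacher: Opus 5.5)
You have a genuine gap in the main step. Your reduction (dual distributivity of $e'$ implies distributivity of $e$, via De Morgan's laws and injectivity) is correct, and so are $w\le e'$ and $w\wedge x=w\wedge y=0$. But $w=0$ is never proved, and the routes you sketch cannot deliver it.

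For comparable $c\le d$ in a weakly orthomodular lattice, \eqref{equ1} says nothing new. Indeed, $d'\wedge c=0$ holds automatically, and $d\wedge c'=0$ already gives $d=c\vee(c'\wedge d)=c$ by \eqref{woml}. So applying \eqref{equ1} to $v\le u$, or to $e'\wedge x\le w\vee(e'\wedge x)$, only restates the goal.

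Pairing $w$ itself with $e'\wedge x$ is wrong. If the hypothesis of \eqref{W} held for this pair in general, you would get $w=e'\wedge x$, hence $e'\wedge x=w\wedge x=0$ for all $x$, and $x=e'$ would force $e=1$. The fallback is void as well: since $u\le e'$, $e\vee(e'\wedge u)=e\vee u$, which equals $u$ only if $e=0$.

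More decisively, every tool you invoke holds in the member of $\mathcal{M}_{DM}$ obtained from $\mathbf{M}_4$ with complementation $a_0\leftrightarrow a_1$, $a_2\leftrightarrow a_3$. These tools are De Morgan's laws, injectivity, \eqref{woml}, \eqref{dwoml}, \eqref{woml+}, \eqref{dwoml+}, and \eqref{equ1} for comparable pairs. In that algebra $a_0''=a_0$, but $a_0\vee(a_2\wedge a_3)=a_0\ne 1=(a_0\vee a_2)\wedge(a_0\vee a_3)$. So \eqref{W} must be applied to a well-chosen incomparable pair, and finding that pair is the whole content of the claim.

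The missing idea is the identity $e\vee(e'\wedge x)=e\vee x$ for all $x$. Equivalently, by De Morgan, $e''=e$ and injectivity, $e'\wedge(e\vee x)=e'\wedge x$ for all $x$; you only have this for $x\le e'$, from \eqref{dwoml+}. The paper's proof runs as follows.

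\begin{enumerate}
\item Put $y:=(e\vee x')\wedge x$. Then $e'\wedge y=0$, and by \eqref{dwoml-} $e\wedge y'=e\wedge x'$.
\item Hence $e'\wedge((e\wedge x')\vee y)=e'\wedge((e'\vee y)'\vee y)=e'\wedge y=0$, again by \eqref{dwoml-}.
\item Also $e\wedge((e\wedge x')\vee y)'=e\wedge(e\wedge y')'\wedge y'=0$.
\item So \eqref{equ1}, applied to the incomparable pair $e$ and $(e\wedge x')\vee y$, gives $e=(e\wedge x')\vee y$.
\item Combined with $(e'\wedge x)\vee y=x$ (which is \eqref{woml} for $e'\wedge x\le x$), this yields $e\vee(e'\wedge x)=(e\wedge x')\vee x=e\vee x$.
\end{enumerate}

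Once you have $e'\wedge(e\vee z)=e'\wedge z$ for all $z$, your own setup closes immediately. You get $w\le e'\wedge(e\vee x')=e'\wedge x'$ and likewise $w\le e'\wedge y'$, so $w\le x'\wedge y'=(x\vee y)'$. Since also $w\le x\vee y$, it follows that $w=0$.
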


We begin by showing that
\begin{equation}\label{E:Pa}
e\vee (e'\wedge x)=e\vee x
\end{equation}
for every $x\in L$. Put 
\[
y:=(e\vee x')\wedge x. 
\]
Note that
$e'\wedge y=e'\wedge x\wedge (e'\wedge x)'=0$.
Since $\mathbf{L}\in\mathcal{O}^d$, we have
$e\wedge x'=e\wedge ((e\vee x')'\vee x')=e\wedge y'$ 
by dual weak orthomodularity \eqref{dwoml-}.
Then
\[
e'\wedge \big((e\wedge x')\vee y\big)
=e'\wedge \big((e\wedge y')\vee y\big)
=e'\wedge \big((e'\vee y)'\vee y\big)=e'\wedge y=0
\]
again by \eqref{dwoml-}. On the other hand, we have
\[
e\wedge \big((e\wedge x')\vee y\big)'
=e\wedge (e\wedge y')'\wedge y'=0.
\] 
Thus $e+((e\wedge x')\vee y)=0$, which gives 
\[
e=(e\wedge x')\vee y
\]
by \eqref{equ1}. Since $\mathbf{L}\in\mathcal{O}$, we also have
\[
(e'\wedge x)\vee y=(e'\wedge x)\vee \big((e\vee x')\wedge x\big)
=(e'\wedge x)\vee \big((e'\wedge x)'\wedge x\big)=x
\]
by \eqref{woml}. Then, using $e=(e\wedge x')\vee y$ (twice) 
together with $(e'\wedge x)\vee y=x$ and $x\ge y$, we get
\[
e\vee (e'\wedge x)=(e\wedge x')\vee y\vee (e'\wedge x)
=(e\wedge x')\vee x=(e\wedge x')\vee y\vee x=e\vee x,
\]
proving \eqref{E:Pa}.

By \eqref{E:Pa} and De Morgan's laws, and since the complementation is injective, we easily obtain
\begin{equation}\label{E:Pb}
e'\wedge (e\vee x)=e'\wedge x
\end{equation}
for every $x\in L$.

Now, from \eqref{E:Pa} and \eqref{E:Pb} we deduce that the element $e'$ is dually distributive, whence the element $e$ is distributive owing to De Morgan's laws.
Indeed, for all $x,y\in L$, we have
\begin{align*}
e'\wedge (x\vee y)
&= e'\wedge (e\vee x\vee y)  
&& \text{by \eqref{E:Pb}}\\
&= e'\wedge (e\vee x\vee e\vee y)\\
&= e'\wedge \big(e\vee (e'\wedge x)\vee e\vee (e'\wedge y)\big) 
&& \text{by \eqref{E:Pa}}\\
&= e'\wedge \big((e'\wedge x)\vee (e'\wedge y)\big) 
&& \text{by \eqref{E:Pb}}\\
&= (e'\wedge x)\vee (e'\wedge y).
\end{align*}
This settles the claim.

We now return to the main proof. Suppose that $a\in L$ satisfies $a''=a$ and put $b:=a'$. Then $a$ is distributive, and since $b''=b$, the claim also ensures that $a=b'$ is dually distributive. 
Thus, $a$ is both distributive and dually distributive, and in order to conclude that $a$ is neutral, it suffices to show that $a$ satisfies the implication
\[
(a\vee x=a\vee y \text{ and } a\wedge x=a\wedge y) 
\;\Rightarrow\; x=y,
\]
for all $x,y\in L$.
Indeed, assuming $a\vee x=a\vee y$ and $a\wedge x=a\wedge y$, we have 
$a\vee (x\wedge y')=(a\vee x)\wedge (a\vee y')=(a\vee y)\wedge (a\vee y')=a\vee (y\wedge y')=a$, 
i.e., $a\ge x\wedge y'$, whence
$x\wedge y'=a\wedge x\wedge y'=a\wedge y\wedge y'=0$.
Similarly, $a\ge x'\wedge y$ and $x'\wedge y=0$.
Thus $x+y=0$, which implies $x=y$ by \eqref{equ1}.

(b) Suppose that $\mathbf{L}\in\mathcal{M}_{DM}$ satisfies identity \eqref{M31}. 
In complemented modular lattices, complements of neutral elements are neutral, and neutral elements are precisely those with unique complements. Thus, (i) and (ii) are equivalent to each other and imply (iii).

To complete the proof, we show that (iii) implies (ii).
Suppose that $b\in L$ is a complement of some $a\in L$ with $a''=a$. By De Morgan's laws and \eqref{M31}, and since $a\wedge b=0$, we have
\begin{align*}
a\wedge (a'\vee b)
&= a\wedge (a'\vee b)\wedge (a\wedge b)'\wedge (b\vee b')\\ 
&= a\wedge (a'\vee b)\wedge (a'\vee b')\wedge (b\vee b')\\ 
&\le (a\wedge a')\vee (a\wedge b)\vee (a\wedge b')
&& \text{by } \eqref{M31}\\
&= a\wedge b'\\
&= (a'\vee b)',
\end{align*}
whence $a\wedge (a'\vee b)=0$.
Then, by modularity, $a'=a'\vee (a\wedge (a'\vee b))=a'\vee b$, so that $b\le a'$.
As a consequence, again by modularity and since $a\vee b=1$, we obtain the desired conclusion:
$b=b\vee (a\wedge a')=(b\vee a)\wedge a'=a'$.
\end{proof}

\begin{corollary}
Suppose that either 
{\rm(a)} $\mathbf{L}\in\mathcal{W}_{DM}$, or 
{\rm(b)} $\mathbf{L}\in\mathcal{M}_{DM}$ and satisfies \eqref{M31}.
Then $\mathbf{L}$ is an ortholattice if and only if it is a Boolean algebra.
\end{corollary}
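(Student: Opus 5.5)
The plan is to reduce the statement to Lemma~\ref{L:neutral} together with the classical fact that a complemented lattice in which every element is neutral is distributive. One direction is trivial: a Boolean algebra, with its unique complementation, is an ortholattice. So assume $\mathbf{L}$ is an ortholattice satisfying either (a) or (b); we must show that $\mathbf{L}$ is a Boolean algebra.

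Since $\mathbf{L}$ is an ortholattice, the complementation is involutive, so $a''=a$ holds for every $a\in L$. Both hypotheses (a) and (b) coincide with those of Lemma~\ref{L:neutral}. Its implication (iii)$\Rightarrow$(i) then shows that every element of $L$ is neutral, and its implication (iii)$\Rightarrow$(ii) shows that every element has a unique complement, namely $a'$.

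The conclusion follows from either of two standard arguments.

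\emph{Via neutrality.} If every element of a lattice is neutral, then in particular every element is distributive, i.e. $a\vee(x\wedge y)=(a\vee x)\wedge(a\vee y)$ for all $a,x,y$. This identity is the distributive law, so $(L,\vee,\wedge)$ is distributive. Being bounded and complemented, it is a Boolean lattice. Complements in a distributive lattice are unique, so $'$ is the Boolean complement and $\mathbf{L}$ is a Boolean algebra.

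\emph{Via uniqueness of complements.} As in the earlier lemma on quasi-identity \eqref{equ2}, a uniquely complemented lattice satisfying De Morgan's laws is distributive (\cite[Chapter 6]{Bl}). An ortholattice satisfies De Morgan's laws, so $\mathbf{L}$ is distributive.

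I expect no real obstacle: all the substantive work sits in Lemma~\ref{L:neutral}. The only care needed is to check that the ortholattice hypothesis supplies exactly condition (iii) for every element, and that the hypotheses (a) and (b) are passed on to Lemma~\ref{L:neutral} unchanged.
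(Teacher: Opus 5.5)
Your proof is correct and follows the paper's argument. The ortholattice hypothesis gives $a''=a$ for every $a$, Lemma~\ref{L:neutral} then makes every element neutral, and so the lattice is distributive; your second route, through uniqueness of complements and the De Morgan result, is also valid but not needed.
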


\begin{proof}
Let $\mathbf{L}$ be an ortholattice that satisfies (a) or (b). Then $a''=a$ for every $a\in L$. By Lemma~\ref{L:neutral}, this means that every element $a\in L$ is neutral, whence the lattice is distributive.
\end{proof}

\begin{corollary}
In every $\mathbf{L}\in\mathcal{W}_{DM}$, distributive, standard, and neutral elements coincide.
\end{corollary}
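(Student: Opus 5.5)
Since neutral implies standard implies distributive in every lattice, only one direction needs work: every distributive element of $\mathbf{L}\in\mathcal{W}_{DM}$ is neutral. I would first show that a distributive element $a$ satisfies $a''=a$, and then finish with Lemma~\ref{L:neutral}(a), where (iii) implies (i).

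To get $a''=a$, I will use distributivity of $a$ together with \eqref{equ1}. By De Morgan's laws and injectivity of $'$ (Lemma~\ref{lem3}), $a$ is distributive if and only if $a'$ is dually distributive, and the same holds for each further iterate. The most direct route is to apply \eqref{equ1} to the pair $a,a''$, that is, to prove
\[
(a'\wedge a'')\vee(a\wedge a''')=0 .
\]
Here $a'\wedge a''=0$ holds because $a''$ is a complement of $a'$. So everything reduces to showing $a\wedge a'''=0$.

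For this, I would use distributivity of $a$ in the form $a\vee(a''\wedge a''')=(a\vee a'')\wedge(a\vee a''')$. The left-hand side equals $a$, since $a''\wedge a'''=0$. Hence $(a\vee a'')\wedge(a\vee a''')=a$. Applying De Morgan's laws gives
\[
(a'\wedge a''')\vee(a'\wedge a'''')=a'.
\]
Combining this with the weak orthomodularity of Lemma~\ref{lem3a} and the quasi-identities \eqref{woml+} and \eqref{dwoml+} of Lemma~\ref{L:89}, I would try to derive $a\wedge a'''=0$.

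I expect this step to be the main obstacle. If the direct computation stalls, I would instead adapt the end of the proof of Lemma~\ref{L:neutral}. That argument shows that distributivity plus dual distributivity plus the cancellation property $(a\vee x=a\vee y \;\&\; a\wedge x=a\wedge y)\Rightarrow x=y$ yields neutrality. The cancellation part used only distributivity and \eqref{equ1}. So with this route the remaining task would be to show that $a$ is also dually distributive. I would aim to get that from distributivity of $a'$, via De Morgan's laws and the Claim in that proof, which again only requires establishing $a''=a$.
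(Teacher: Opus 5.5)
Your reduction matches the paper's: neutral $\Rightarrow$ standard $\Rightarrow$ distributive holds in any lattice, and for a distributive $a$ it suffices to prove $a''=a$ and then apply Lemma~\ref{L:neutral}(a), (iii)$\Rightarrow$(i). But the step that carries all the content, $a''=a$, is never proved, and the route you propose for it cannot succeed.

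The only consequence of distributivity you extract is $(a\vee a'')\wedge(a\vee a''')=a$, i.e.\ $(a'\wedge a''')\vee(a'\wedge a'''')=a'$. Take $a=a_0$ in $\mathbf{M}_3'\in\mathcal{W}_{DM}$. Then $a_0''=a_2$ and $a_0'''=a_0$, so this equation holds trivially: $(a_0\vee a_2)\wedge a_0=a_0$. Moreover, $\mathbf{M}_3'$ satisfies De Morgan's laws, \eqref{equ1}, weak and dual weak orthomodularity, \eqref{woml+} and \eqref{dwoml+}. Yet $a_0\wedge a_0'''=a_0\neq 0$. So as long as this equation is the only information about $a$ that you use, no derivation of $a\wedge a'''=0$ exists.

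Your fallback does not rescue the argument. As you note yourself, it again needs $a''=a$. Once that is available, Lemma~\ref{L:neutral} already gives neutrality, so the detour through dual distributivity and the Claim is superfluous.

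The missing idea is to test distributivity against the pair $a',a''$ instead of $a'',a'''$. Since $a'\wedge a''=0$, you get $a=a\vee(a'\wedge a'')=(a\vee a')\wedge(a\vee a'')=a\vee a''$, so $a''\le a$. Then dual weak orthomodularity \eqref{dwoml} (available by Lemma~\ref{lem3a}) with $x=a$, $y=a''$ gives $a=a\wedge(a'\vee a'')=a''$. This is exactly the paper's proof. It is also precisely the instance that fails for $a_0$ in $\mathbf{M}_3'$, where $(a_0\vee a_1)\wedge(a_0\vee a_2)=1\neq a_0=a_0\vee(a_1\wedge a_2)$.

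A minor side point: De Morgan's laws and injectivity do give ``$a'$ dually distributive $\Rightarrow$ $a$ distributive''. The converse direction, however, only yields dual distributivity of $a'$ against elements in the range of ${}'$. You never use it, so it does no harm.
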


\begin{proof}
Let $\mathbf{L}\in\mathcal{W}_{DM}$.
If $a\in L$ is distributive, then 
$a=a\vee (a'\wedge a'')=a\vee a''$, i.e., $a\ge a''$, whence 
$a=a\wedge (a'\vee a'')=a''$ by dual weak orthomodularity \eqref{dwoml}. Thus, by Lemma~\ref{L:neutral}, the element $a$ is neutral.
\end{proof}

\begin{lemma}
Let $\mathbf{L}\in\mathcal{W}_{DM}$.
For any $a,b\in L$, we have $a+b=1$ if and only if $a$ is neutral and $a'=b$ (equivalently, $b$ is neutral and $b'=a$).
\end{lemma}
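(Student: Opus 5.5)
The plan is to prove the converse direction first and then reduce the forward direction to showing that $b=a'$. The point is that the statement is symmetric in $a$ and $b$ (since $+$ is commutative), and Lemma~\ref{L:neutral} turns neutrality into the equation $a''=a$.

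\emph{Converse.} Suppose $a$ is neutral and $b=a'$. By Lemma~\ref{L:neutral} we have $a''=a$. Hence
\[
a+b=(a'\wedge a')\vee(a\wedge a'')=a'\vee a=1 .
\]

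\emph{Forward direction, step 1: $b$ is a complement of $a$.} Assume $a+b=1$ and put $u:=a'\wedge b$ and $v:=a\wedge b'$, so that $u\vee v=1$. By De Morgan's laws, $0=1'=u'\wedge v'=(a''\vee b')\wedge(a'\vee b'')$.
\begin{itemize}
\item Both $a'\wedge b'$ and $a''\wedge b''$ lie below each of the two meetands, so both are $0$.
\item By De Morgan's laws this gives $(a\vee b)'=0=1'$ and $(a\wedge b)'=1=0'$.
\item Since the complementation is injective (Lemma~\ref{lem3}), we get $a\vee b=1$ and $a\wedge b=0$.
\end{itemize}
Also $u\wedge v\le a\wedge a'=0$, so $u$ and $v$ are complements of each other, with $u\le a'$, $u\le b$, $v\le a$ and $v\le b'$.

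\emph{Step 2: $b\le a'$ (the main obstacle).} My first attempt is to apply \eqref{W} to the pair $b,u$. One half is immediate: $b'\wedge u\le b'\wedge b=0$. The other half, $b\wedge u'=b\wedge(a''\vee b')=0$, is where the work lies. To get it, I would use weak orthomodularity \eqref{woml} on $u\le b$, which gives $b=u\vee(u'\wedge b)$. Then I would try to show that the element $w:=u'\wedge b$ satisfies $w+0=0$, using $a\vee b=1$, $u\vee v=1$ and the De Morgan identities; by \eqref{equ1} this gives $w=0$.

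If that does not go through, I would use the quasi-identity \eqref{woml+} of Lemma~\ref{L:89} instead. Since $a'\le a'\vee b$, it gives
\[
a'\vee b=a'\vee(a\wedge(a'\vee b)).
\]
Then I would try to show that $a\wedge(a'\vee b)=0$, which forces $b\le a'$. Here the relevant facts are $a\wedge(a'\vee b)\le$ something meeting $v$ trivially, and the symmetric role of $v\le a$.

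\emph{Step 3: conclude.} Once $b\le a'$, the quasi-identity \eqref{dwoml+} applied to $a'\ge b$ gives $b=a'\wedge(a\vee b)=a'\wedge 1=a'$.

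Since the hypothesis $a+b=1$ is symmetric in $a$ and $b$, the same argument gives $a=b'$. Hence $a''=b'=a$, so $a$ is neutral by Lemma~\ref{L:neutral}. The same argument shows that $b$ is neutral and $b'=a$, which gives the parenthetical equivalent form.
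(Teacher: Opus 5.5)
Your converse direction, Step~1 and Step~3 are fine. In Step~1, note that obtaining $(a\wedge b)'=1$ already uses injectivity once, via $(a'\vee b')'=a''\wedge b''=0=1'$. The gap is Step~2, which you yourself call the main obstacle and never actually prove. It is the heart of the lemma. Step~1 alone cannot suffice: in $\mathbf{M}_3'$ the element $a_0$ has the complement $a_2\neq a_0'$. So the hypothesis $a+b=1$ must be used again in some essential way.

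Neither of your two routes supplies this.
\begin{itemize}
\item Your first route is circular. Since $w+0=(w'\wedge 0)\vee(w\wedge 0')=w$, the condition ``$w+0=0$'' is literally the statement $w=0$ that you want, and \eqref{equ1} adds nothing.
\item Your second route reduces $b\le a'$ to $a\wedge(a'\vee b)=0$, but gives no argument for the latter. The remark about ``something meeting $v$ trivially'' is not a proof.
\end{itemize}

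The missing idea is to aim for the opposite inequality $a'\le b$, which is available almost for free from facts you already listed.
\begin{itemize}
\item Since $v=a\wedge b'\le a$, you have $a\vee u\ge u\vee v=1$, i.e.\ $a\vee(a'\wedge b)=1$. The paper derives the same equation via \eqref{woml}, but the absorption argument suffices.
\item Apply \eqref{dwoml+} to $a'\ge a'\wedge b$. This gives $a'=a'\wedge(a\vee(a'\wedge b))=a'\wedge b$, so $a'\le b$.
\end{itemize}

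You can then finish in either of two ways.
\begin{itemize}
\item Combine this with your Step~1 via \eqref{woml+}: $b=a'\vee(a\wedge b)=a'$.
\item Or follow the paper. By symmetry $b'\le a$, so $1=a+b=a'\vee b'$. Then \eqref{dwoml} applied to $a\ge b'$ gives $a=a\wedge(a'\vee b')=b'$, and symmetrically $b=a'$. Neutrality of $a$ and $b$ then follows from $a''=a$, $b''=b$ and Lemma~\ref{L:neutral}. On this route your Step~1 is not needed at all.
\end{itemize}
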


\begin{proof}
If $a\in L$ is neutral, then $a''=a$ by Lemma \ref{L:neutral}, and we have $a+a'=a'\vee (a\wedge a'')=a'\vee a=1$. 

Conversely, suppose that $a+b=1$ for some $a,b\in L$. By weak orthomodularity 
\eqref{woml} we have 
$(a\wedge b')\vee ((a\wedge b')'\wedge a)=a$, 
whence
\[
1=(a+b)\vee \big((a\wedge b')'\wedge a\big)
=(a'\wedge b)\vee (a\wedge b')\vee \big((a\wedge b')'\wedge a\big)
=(a'\wedge b)\vee a.
\]
Then
$a'=a'\wedge (a\vee (a'\wedge b))=a'\wedge b$ by \eqref{dwoml+},
i.e., $a'\le b$. Symmetrically, interchanging $a$ and $b$, we obtain $b'\le a$. Hence $1=a+b=a'\vee b'$, and so
$a=a\wedge (a'\vee b')=b'$ by dual weak orthomodularity \eqref{dwoml} as $a\ge b'$.
Symmetrically, again by \eqref{dwoml}, $b=b\wedge (b'\vee a')=a'$ as $b\ge a'$.
Finally, we have $a''=b'=a$ and $b''=a'=b$, which means that the elements $a$ and $b$ are neutral by Lemma~\ref{L:neutral}.
\end{proof}

\section{Congruence properties of $\mathcal{M}$}
\label{SEC5}

In this section, we describe basic congruence properties of the variety $\mathcal{M}$, and of $\mathcal{M}_{DM}$ in particular, that will be needed in the final section.
We first observe that in lattices with complementation that are both weakly orthomodular and dually weakly orthomodular, De Morgan's laws imply that their congruences coincide with the lattice congruences.

\begin{lemma}\label{lem=samecongr}
If $\mathbf{L}\in (\mathcal{O}\cap\mathcal{O}^d)_{DM}$, 
then $\mathrm{Con}\,\mathbf{L}=\mathrm{Con}\,(L,\vee,\wedge)$.
\end{lemma}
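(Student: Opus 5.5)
Since every congruence of $\mathbf{L}$ is in particular a lattice congruence, the inclusion $\mathrm{Con}\,\mathbf{L}\subseteq\mathrm{Con}\,(L,\vee,\wedge)$ is trivial. The plan is to prove the converse: every lattice congruence $\theta$ is compatible with $'$. Because a lattice congruence is determined by its comparable pairs, it suffices to show the following. If $a\le b$ and $(a,b)\in\theta$, then $(a',b')\in\theta$. Indeed, for arbitrary $(x,y)\in\theta$ we have $(x\wedge y,x)\in\theta$ and $(x\wedge y,y)\in\theta$, both comparable pairs. Hence $(x',(x\wedge y)')\in\theta$ and $(y',(x\wedge y)')\in\theta$, and transitivity gives $(x',y')\in\theta$.

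So I fix $a\le b$ with $a\,\theta\, b$. By De Morgan's laws, $b'\le a'$ (since $a'=(a\wedge b)'=a'\vee b'$). To show $b'\,\theta\,a'$, I will write $a'$ in terms of $b'$ using the quasi-identities of Lemma~\ref{L:89}, which hold in $(\mathcal{O}\cap\mathcal{O}^d)_{DM}$. Apply \eqref{woml+} with $x:=b$ and $y:=a'$; the hypothesis $b'\le a'$ holds. This gives
\[
a'=b'\vee(b\wedge a').
\]
Now $b\wedge a'\;\theta\;a\wedge a'=0$ because $b\,\theta\,a$. Therefore $a'=b'\vee(b\wedge a')\;\theta\;b'\vee 0=b'$, as required.

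The only delicate point is choosing the right instance of Lemma~\ref{L:89} so that the term produced contains $b$ in a position that can be replaced by $a$ and collapse to $0$. Here that is $b\wedge a'\mapsto a\wedge a'=0$. As a symmetric check, one could alternatively use \eqref{dwoml+} with $x:=a$ and $y:=b'$ (valid since $a'\ge b'$). This gives $b'=a'\wedge(a\vee b')\;\theta\;a'\wedge(b\vee b')=a'$. Either one suffices, and both rely only on De Morgan's laws together with weak orthomodularity and its dual.
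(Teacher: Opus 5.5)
Your proof is correct and is essentially the paper's argument. The paper applies \eqref{woml+} with $x:=a$, $y:=(a\wedge b)'$ to the comparable pair $a\wedge b\le a$ and then collapses $a\wedge(a\wedge b)'$ to $0$; this is exactly your instance, with your pair $a\le b$ playing the roles of $a\wedge b\le a$, except that you make the reduction to comparable pairs a separate step while the paper does it inline.
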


\begin{proof}
Let $\mathbf{L}\in (\mathcal{O}\cap\mathcal{O}^d)_{DM}$
and $\theta\in\mathrm{Con}\,(L,\vee,\wedge)$. 
For any $a,b\in L$, if $(a,b)\in\theta$, then  
$0=a\wedge b\wedge (a\wedge b)'\equiv_\theta a\wedge (a\wedge b)'$,
whence
$a' = a'\vee 0 \equiv_\theta a'\vee (a\wedge (a\wedge b)')=(a\wedge b)'$,
because $a'\le (a\wedge b)'$ and $\mathbf{L}$ satisfies \eqref{woml+} by Lemma~\ref{L:89}. Symmetrically, $(b',(a\wedge b)')\in\theta$.
Thus $(a',b')\in\theta$, proving that $\theta\in\mathrm{Con}\,\mathbf{L}$.
\end{proof}

De Morgan's laws \eqref{DM} are essential here; see Example~\ref{EXA2} where $\mathbf{H}_2$ (see also Example~\ref{EXA4}) is a lattice with complementation which belongs to $\mathcal{W}\cap\mathcal{M}$, but its congruence lattice is smaller than the congruence lattice of the lattice reduct.

Now, we want to determine which of the aforementioned varieties of lattices with complementation are congruence permutable. Since all these varieties are congruence distributive, being congruence permutable is equivalent to being arithmetical.

Recall that a ternary term $p(x,y,z)$ is called a \emph{Pixley term} for an algebra (or a class of similar algebras) if the algebra (or the class) satisfies the identities
\[
p(x,y,y)\approx p(x,y,x)\approx p(y,y,x)\approx x.
\]
It is well known that a variety is arithmetical if and only if it has a Pixley term. See, e.g., \cite[Chapter II]{BS} or \cite[Chapter 3]{CEL}.

\begin{lemma}\label{lem2}
The term
\[
p(x,y,z):=(x\wedge y\wedge z)\vee\big(x\wedge(x\wedge y)'\big)\vee\big(z\wedge(z\wedge y)'\big)
\]
is a Pixley term for a lattice with complementation $\mathbf{L}$ if and only if 
$\mathbf{L}\in\mathcal{O}$. Dually, the term
\[
q(x,y,z):=(x\vee y\vee z)\wedge\big(x\vee(x\vee y)'\big)\wedge\big(z\vee(z\vee y)'\big)
\]
is a Pixley term for $\mathbf{L}$ if and only if $\mathbf{L}\in\mathcal{O}^d$.
\end{lemma}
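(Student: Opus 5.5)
We need to check the three Pixley identities for $p$ and to see which of them forces weak orthomodularity. We then dualize for $q$.

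First, $p(x,y,x)$. We have $p(x,y,x)=(x\wedge y)\vee(x\wedge(x\wedge y)')$. We will show that $p(x,y,x)\approx x$ is exactly the first equivalent identity form of \eqref{woml} recorded in Section~\ref{SEC4}, namely $(x\wedge y)\vee((x\wedge y)'\wedge x)\approx x$, with the roles of $x$ and $y$ renamed. This form is equivalent to \eqref{woml}: put $x:=b$ and $y:=a$ with $a\le b$ to recover \eqref{woml}, and conversely apply \eqref{woml} to $x\wedge y\le x$. Hence, if $p$ is a Pixley term for $\mathbf{L}$, then $\mathbf{L}\in\mathcal{O}$, which settles the "only if" direction.

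For the "if" direction, assume $\mathbf{L}\in\mathcal{O}$. The identity $p(x,y,x)\approx x$ then holds by the above. Next, $p(x,y,y)=(x\wedge y)\vee(x\wedge(x\wedge y)')\vee(y\wedge y')$, and the last joinand is $0$. So $p(x,y,y)$ coincides with $p(x,y,x)$, which equals $x$. Finally, $p(y,y,x)=(x\wedge y)\vee(y\wedge y')\vee(x\wedge(x\wedge y)')$ by commutativity of $\wedge$. The middle term vanishes again, so this is the same expression as before and equals $x$.

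The statement for $q$ is the exact lattice dual. Swapping $\vee$ and $\wedge$ and $0$ and $1$ turns the computations into $q(x,y,x)=(x\vee y)\wedge(x\vee(x\vee y)')$, and the terms $y\vee y'=1$ drop out. The identity $q(x,y,x)\approx x$ is then the first dual identity listed in Section~\ref{SEC4}, $(x\vee y)\wedge((x\vee y)'\vee y)\approx y$, after renaming variables; that identity is equivalent to \eqref{dwoml}.

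There is essentially no obstacle. The only point needing care is to match $p(x,y,x)$ with the correct equivalent form of \eqref{woml}, in particular tracking which variable plays the role of the larger element.
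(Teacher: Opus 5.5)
Your proposal is correct and follows essentially the same route as the paper. Both arguments reduce $p(x,y,x)$, $p(x,y,y)$ and $p(y,y,x)$ to the single term $(x\wedge y)\vee(x\wedge(x\wedge y)')$, note that requiring this term to equal $x$ is exactly weak orthomodularity (you also track correctly which element plays the larger role), and then dualize for $q$.
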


\begin{proof}
Let $\mathbf{L}$ be a lattice with complementation.
For any $a,b\in L$, we have
\begin{gather*}
p(a,b,a)=p(a,b,b)=p(b,b,a)
=(a\wedge b)\vee\big(a\wedge(a\wedge b)'\big)
\intertext{and, dually,}
q(a,b,a)=q(a,b,b)=q(b,b,a)
=(a\vee b)\wedge\big(a\vee(a\vee b)'\big).
\end{gather*}
Hence, the following conditions (a)--(c) are equivalent, and so are the dual conditions (a')--(c'):

\noindent
\begin{minipage}[t]{0.475\textwidth}
\begin{enumerate}[(a)]
\item
$p(x,y,z)$ is a Pixley term for $\mathbf{L}$;
\item
$(a\wedge b)\vee (a\wedge(a\wedge b)')=a$ \\
for all $a,b\in L$;
\item
$\mathbf{L}\in\mathcal{O}$.
\end{enumerate}
\end{minipage}
\hfill
\begin{minipage}[t]{0.475\textwidth}
\begin{enumerate}[(a')]
\item
$q(x,y,z)$ is a Pixley term for $\mathbf{L}$;
\item
$(a\vee b)\wedge (a\vee(a\vee b)')=a$ \\
for all $a,b\in L$;
\item
$\mathbf{L}\in\mathcal{O}^d$.
\end{enumerate}
\end{minipage}
\end{proof}

\begin{corollary}[cf.\ \cite{CL18}, Theorem 5.3]\label{C:arithm}
Every subvariety of $\mathcal{O}$, as well as every subvariety of $\mathcal{O}^d$, is arithmetical. In particular, the variety $\mathcal{M}$ is arithmetical.
\end{corollary}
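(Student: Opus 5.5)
The plan is to read the corollary off Lemma~\ref{lem2}, using the standard fact that a variety with a Pixley term is arithmetical. The key point is that the Pixley identities for $p$ (respectively $q$) hold in an algebra exactly when that algebra lies in $\mathcal{O}$ (respectively $\mathcal{O}^d$). Since $\mathcal{O}$ and $\mathcal{O}^d$ are varieties, every subvariety inherits membership, and so inherits the Pixley term.

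For a subvariety $\mathcal{V}\subseteq\mathcal{O}$, I would argue as follows.

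\begin{enumerate}[\rm(1)]
\item Every $\mathbf{L}\in\mathcal{V}$ belongs to $\mathcal{O}$.
\item By Lemma~\ref{lem2}, $\mathbf{L}$ therefore satisfies $p(x,y,y)\approx p(x,y,x)\approx p(y,y,x)\approx x$.
\item These identities thus hold throughout $\mathcal{V}$, so $p$ is a Pixley term for $\mathcal{V}$.
\item Hence $\mathcal{V}$ is arithmetical, by the cited characterization \cite[Chapter II]{BS}.
\end{enumerate}

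The same argument with $q$ in place of $p$ handles any subvariety of $\mathcal{O}^d$.

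For $\mathcal{M}$, I would first note that $\mathcal{M}$ is a variety, since modularity is an identity. I would then invoke the earlier observation $\mathcal{M}\subseteq\mathcal{O}\cap\mathcal{O}^d$. For completeness, weak orthomodularity follows from modularity: if $x\le y$, then $x\vee(x'\wedge y)=(x\vee x')\wedge y=y$. So $\mathcal{M}$ is a subvariety of $\mathcal{O}$ and the first part applies.

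No step looks genuinely hard. The only point needing care is that Lemma~\ref{lem2} is a pointwise statement about a single algebra, and a Pixley term for a variety must work uniformly. This is harmless, because $p$ is the same term for every member of the class.
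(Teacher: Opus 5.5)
Your proposal is correct and matches the paper's argument: the corollary is read off from Lemma~\ref{lem2} together with the characterization of arithmetical varieties by Pixley terms, and $\mathcal{M}$ is covered by the observation $\mathcal{M}\subseteq\mathcal{O}\cap\mathcal{O}^d$. Your modularity check $x\vee(x'\wedge y)=(x\vee x')\wedge y=y$ for $x\le y$ is exactly what that observation rests on.
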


Given the well-known properties of congruences of complemented modular lattices (namely, that congruences permute and correspond one-to-one to perspectivity-closed ideals, see e.g. \cite{G11}), it is no surprise that the variety $\mathcal{M}$ is arithmetical and regular at $0$.
In fact, $\mathcal{M}$ is congruence regular, as we now show.

Recall that a variety $\mathcal{V}$ is congruence regular if and only if, 
for some integer $n\ge 1$, there exist ternary terms $t_1(x,y,z),\dots,t_n(x,y,z)$ in the language of $\mathcal{V}$ such that $\mathcal{V}$ satisfies
\begin{equation}\label{E:R}
t_1(x,y,z)\approx\dots\approx t_n(x,y,z)\approx z
\;\Leftrightarrow\; x\approx y.
\end{equation}
See, e.g., \cite[Chapter 6]{CEL}.

\begin{lemma}\label{L:54}
Given a binary term $x\oplus y$, a lattice with complementation satisfies condition \eqref{E:R} for $n=2$ and the ternary terms
\[
p_1(x,y,z):=(x\oplus y)\vee z
\quad\text{and}\quad
p_2(x,y,z):=(x\oplus y)'\wedge z
\]
if and only if it satisfies 
\begin{equation}\label{E:D}
x\oplus y\approx 0\;\Leftrightarrow\; x\approx y.
\end{equation}
Dually, given a binary term $x\odot y$, a lattice with complementation satisfies condition \eqref{E:R} for $n=2$ and the ternary terms
\[
q_1(x,y,z):=(x\odot y)\wedge z
\quad\text{and}\quad
q_2(x,y,z):=(x\odot y)'\vee z
\]
if and only if it satisfies 
\begin{equation}\label{E:DD}
x\odot y\approx 1\;\Leftrightarrow\; x\approx y.
\end{equation}
\end{lemma}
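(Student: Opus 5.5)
We must show that (E:R) for $p_1,p_2$ is equivalent to (E:D); the dual statement then follows by order duality. Throughout I read (E:R) and (E:D) as conditions holding for all elements: for all $a,b,c$, $t_1(a,b,c)=\dots=t_n(a,b,c)=c$ if and only if $a=b$, and likewise for (E:D). For terms $p_1,p_2$ with $d:=a\oplus b$, (E:R) says that for all $a,b,c\in L$,
\[
d\vee c=c \text{ and } d'\wedge c=c \iff a=b,
\]
that is, $\big(d\le c$ and $c\le d'\big)\iff a=b$. I will prove the two implications between this and (E:D) separately, using only that $'$ is a complementation.

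First assume (E:D). Suppose $a=b$. Then $d=0$, and $d'$ is a complement of $0$, so $d'=1$; hence $0\le c\le 1$ holds for every $c$. Conversely, suppose $d\le c\le d'$ for some $c$. Then $d\le d'$, so $d=d\wedge d'=0$, and (E:D) gives $a=b$.

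Now assume (E:R). Suppose $a=b$. Taking $c:=0$ in the forward direction of (E:R) gives $d\le 0$, so $d=0$. Suppose instead $d=0$. Then $d'=1$, and $c:=0$ satisfies $0\le 0\le 1$, so the backward direction of (E:R) gives $a=b$.

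For the dual statement, put $e:=a\odot b$. The conditions $e\wedge c=c$ and $e'\vee c=c$ say $e'\le c\le e$, and the same argument applies with $1$ and $0$ interchanged: $e=1$ forces $e'=0$, and $e'\le e$ forces $e=e\vee e'=1$.

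No step is a real obstacle. The only point needing care is the observation that the complement of $0$ must be $1$ (and of $1$ must be $0$), since $0'\vee 0=1$.
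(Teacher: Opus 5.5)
Your proof is correct and matches the paper's argument essentially step for step. Both reduce the condition for $p_1,p_2$ to the sandwich $a\oplus b\le c\le (a\oplus b)'$, use $d\le d'\Rightarrow d=0$ for one direction, and specialize $c:=0$ (using $0'=1$) for the other, with the dual statement obtained by order duality.
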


\begin{proof}
Let $\mathbf{L}$ be a lattice with complementation.
Suppose that $\mathbf{L}$ satisfies condition \eqref{E:R} for $p_1(x,y,z)$ and $p_2(x,y,z)$, i.e., it satisfies
\begin{equation}\label{E:RP12}
p_1(x,y,z)\approx p_2(x,y,z)\approx z
\;\Leftrightarrow\; x\approx y.
\end{equation}
Let $a,b\in L$. If $a\oplus b=0$, then 
$p_1(a,b,0)=p_2(a,b,0)=0$, whence $a=b$ by \eqref{E:RP12}.
Clearly, $a\oplus a=p_1(a,a,0)=0$ again by \eqref{E:RP12}. 
Thus $\mathbf{L}$ satisfies \eqref{E:D}.

Conversely, suppose that $\mathbf{L}$ satisfies condition \eqref{E:D}. Let $a,b,c\in L$.
If $p_1(a,b,c)=p_2(a,b,c)=c$, i.e., $(a\oplus b)\vee c=(a\oplus b)'\wedge c=c$, then we have $a\oplus b\le c\le (a\oplus b)'$, which implies $a\oplus b=0$, and so $a=b$ by \eqref{E:D}.
On the other hand, by \eqref{E:D} we have $a\oplus a=0$, and hence $p_1(a,a,c)=p_2(a,a,c)=c$.
Thus $\mathbf{L}$ satisfies \eqref{E:RP12}.

The arguments for the second statement are dual.
\end{proof}

\begin{corollary}[cf.\ \cite{CL18}, Theorem 5.3]\label{C:reg}
Every subvariety of $\mathcal{O}$, as well as every subvariety of $\mathcal{O}^d$, is congruence regular. In particular, the variety $\mathcal{M}$ is congruence regular.
\end{corollary}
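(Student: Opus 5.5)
We will apply Lemma~\ref{L:54} to a concrete pair of binary terms, one for $\mathcal{O}$ and one for $\mathcal{O}^d$. For $\mathcal{O}$ we take
\[
x\oplus y:=\big(x\wedge(x\wedge y)'\big)\vee\big(y\wedge(x\wedge y)'\big),
\]
and for $\mathcal{O}^d$ the dual term
\[
x\odot y:=\big(x\vee(x\vee y)'\big)\wedge\big(y\vee(x\vee y)'\big).
\]
It suffices to show that every member of $\mathcal{O}$ satisfies \eqref{E:D} for $\oplus$, and dually that every member of $\mathcal{O}^d$ satisfies \eqref{E:DD} for $\odot$. Lemma~\ref{L:54} then yields terms $p_1,p_2$ (resp.\ $q_1,q_2$) satisfying \eqref{E:R} throughout $\mathcal{O}$ (resp.\ $\mathcal{O}^d$). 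Since \eqref{E:R} is a universal condition, it is inherited by every subvariety, and such a subvariety is therefore congruence regular. The statement for $\mathcal{M}$ follows from $\mathcal{M}\subseteq\mathcal{O}\cap\mathcal{O}^d$.

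The trivial direction is checked first: $a\oplus a=(a\wedge a')\vee(a\wedge a')=0$, and dually $a\odot a=1$.

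The main step is the converse in $\mathcal{O}$. Suppose $a\oplus b=0$. Then
\[
a\wedge(a\wedge b)'=0 \quad\text{and}\quad b\wedge(a\wedge b)'=0.
\]
Weak orthomodularity in the form used in the proof of Lemma~\ref{lem2}, namely
\[
(a\wedge b)\vee\big(a\wedge(a\wedge b)'\big)=a,
\]
gives $a=(a\wedge b)\vee 0=a\wedge b$. Symmetrically, $b=(a\wedge b)\vee\big(b\wedge(a\wedge b)'\big)=a\wedge b$. Hence $a=b$. The dual argument, using identity (b$'$) from Lemma~\ref{lem2}, handles $\odot$ in $\mathcal{O}^d$: from $a\odot b=1$ we obtain $a=a\vee b=b$.

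The only point requiring care is choosing $\oplus$ so that both conjuncts collapse exactly through the identity characterising $\mathcal{O}$. Note that the symmetric difference $+$ would not work here, because \eqref{equ1} fails in $\mathcal{O}$ (e.g.\ in $\mathbf{M}_4'$). With the term above, each step is a single application of the weak orthomodularity identity, so no genuine obstacle is expected.
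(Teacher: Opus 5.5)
Your proof is correct and follows the paper's route: you apply Lemma~\ref{L:54} to a difference term whose vanishing forces $x=y$ by weak orthomodularity, and you note that \eqref{E:R} passes to subvarieties. The only difference is the choice of term. The paper uses $x\oplus y:=(x\vee y)\wedge(x\wedge y)'$ with a single application of \eqref{woml} to $a\wedge b\le a\vee b$, whereas your term $\big(x\wedge(x\wedge y)'\big)\vee\big(y\wedge(x\wedge y)'\big)$ needs two applications (to $a\wedge b\le a$ and to $a\wedge b\le b$), which is equally valid.
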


\begin{proof}
For a subvariety of $\mathcal{O}$ we may take the term
\[
x\oplus y:=(x\vee y)\wedge (x\wedge y)',
\]
and for a subvariety of $\mathcal{O}^d$, the dual term
\[
x\odot y:=(x\wedge y)\vee (x\vee y)'.
\]
Indeed, let $\mathbf{L}\in\mathcal{O}$ and $a,b\in L$.
Clearly, $a\oplus a=a\wedge a'=0$. If $a\oplus b=0$, then 
$a\wedge b=(a\wedge b)\vee (a\oplus b)
=(a\wedge b)\vee ((a\wedge b)'\wedge (a\vee b))=a\vee b$, 
and so $a=b$.
Thus condition \eqref{E:D} is satisfied.
The argument for $\mathbf{L}\in\mathcal{O}^d$ with the term
$x\odot y$ is dual.
\end{proof}

\section{The varieties generated by $\mathbf{M}_n'$}
\label{SEC6}

The first goal of this section is to axiomatize the variety of lattices with complementation generated by $\mathbf{M}_3'$.
It is no surprise that, to some extent, we rely on J\'onsson's axiomatization of the varieties of lattices generated by the $\mathbf{M}_n$'s (see \cite{Jo} or \cite[Chapter 5]{JR}):

For every integer $n\ge 3$, the variety generated by the lattice $\mathbf{M}_n$ is axiomatized, relative to modular lattices, by the identities 
\begin{gather}
\label{jo}
u\wedge \big(x\vee (y\wedge z)\big)\wedge (y\vee z)\le
x\vee (u\wedge y)\vee (u\wedge z)
\intertext{and}
\tag{$\mathrm{M}_n$}\label{mn}
x_0\wedge\bigwedge_{1\le i<j\le n} (x_i\vee x_j) \le 
\bigvee_{1\le i\le n} (x_0\wedge x_i).
\end{gather}
In particular, \eqref{jo} is superfluous for $n=3$, whence the variety generated by $\mathbf{M}_3$ is axiomatized, relative to modular lattices, by 
\[
\tag{$\mathrm{M}_3$}\label{M3}
u\wedge (x\vee y)\wedge (x\vee z)\wedge (y\vee z)
\le (u\wedge x)\vee (u\wedge y)\vee (u\wedge z).
\]

This identity does not, however, suffice to axiomatize the variety generated by $\mathbf{M}_3'$ relative to modular lattices with complementation. For example, $\mathbf{H}_2$ from Example~\ref{EXA2} (also see Example~\ref{EXA4}) is a subdirectly irreducible modular lattice with complementation that satisfies \eqref{M3} but obviously does not belong to the variety $\mathsf{V}(\mathbf{M}_3')$. 

The situation changes when we consider modular lattices with De~Morgan complementation, since in this case, based on Lemma~\ref{lem=samecongr}, we obtain the following:

\begin{theorem}\label{T:M3axio}
Relative to the variety $\mathcal{M}_{DM}$ of modular lattices with \linebreak De~Morgan complementation, the variety $\mathsf{V}(\mathbf{M}_3')$ is axiomatized by the identity \eqref{M3}.
\end{theorem}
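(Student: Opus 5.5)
One inclusion is immediate. $\mathbf{M}_3'$ is modular, satisfies De Morgan's laws (Example~\ref{EXA1}), and its lattice reduct $\mathbf{M}_3$ satisfies \eqref{M3}. Hence $\mathsf{V}(\mathbf{M}_3')$ is contained in the subvariety $\mathcal{V}$ of $\mathcal{M}_{DM}$ defined by \eqref{M3}. For the converse we show that every subdirectly irreducible member of $\mathcal{V}$ lies in $\mathsf{V}(\mathbf{M}_3')$. By Birkhoff's theorem this gives $\mathcal{V}\subseteq\mathsf{V}(\mathbf{M}_3')$.

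Let $\mathbf{L}\in\mathcal{V}$ be subdirectly irreducible. Since $\mathcal{M}\subseteq\mathcal{O}\cap\mathcal{O}^d$, Lemma~\ref{lem=samecongr} gives $\mathrm{Con}\,\mathbf{L}=\mathrm{Con}\,(L,\vee,\wedge)$. Thus the lattice reduct of $\mathbf{L}$ is a subdirectly irreducible lattice lying in the lattice variety $\mathsf{V}(\mathbf{M}_3)$, by modularity together with \eqref{M3} and J\'onsson's axiomatization. By J\'onsson's lemma, the subdirectly irreducible members of $\mathsf{V}(\mathbf{M}_3)$ lie in $\mathsf{HS}(\mathbf{M}_3)$, so they are $\mathbf{2}$ and $\mathbf{M}_3$ (and the trivial lattice). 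Since $\mathbf{L}$ is bounded and complemented, its reduct is therefore either $\mathbf{2}$ or $\mathbf{M}_3$. In the first case $\mathbf{L}$ is the two-element Boolean algebra. This is the subalgebra $\{0,1\}$ of $\mathbf{M}_3'$, so it lies in $\mathsf{V}(\mathbf{M}_3')$.

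In the second case the complementation on $\mathbf{M}_3$ is injective by Lemma~\ref{lem3a}. It fixes $0,1$ and maps each atom to a different atom, so on $\{a_0,a_1,a_2\}$ it is a fixed-point-free permutation of three elements, i.e.\ a $3$-cycle. By the remark in Example~\ref{EXA1} (both $3$-cycles give isomorphic algebras), $\mathbf{L}\cong\mathbf{M}_3'$. This completes the proof.

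The step I expect to need the most care is the reduction to the lattice reduct. One must argue that subdirect irreducibility of $\mathbf{L}$ transfers to its lattice reduct, which is exactly what Lemma~\ref{lem=samecongr} provides. One must also correctly invoke the classification of subdirectly irreducible lattices in $\mathsf{V}(\mathbf{M}_3)$ via J\'onsson's lemma, which applies because lattices are congruence distributive. The remaining steps are routine.
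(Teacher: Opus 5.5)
Your proposal is correct and follows the paper's own proof essentially step by step. Lemma~\ref{lem=samecongr} makes the lattice reduct subdirectly irreducible, J\'onsson's results reduce it to $\mathbf{2}$ or $\mathbf{M}_3$, and uniqueness of the De~Morgan complementation on $\mathbf{M}_3$ gives $\mathbf{L}\cong\mathbf{M}_3'$; your injectivity argument (Lemma~\ref{lem3a} plus the fixed-point-free $3$-cycle) just spells out what the paper asserts in one line.
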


\begin{proof}
Let $\mathcal{K}$ be the subvariety of $\mathcal{M}_{DM}$ defined by \eqref{M3}. Let $\mathbf{L}\in\mathcal{K}$ be subdirectly irreducible. Since $\mathcal{M}\subseteq\mathcal{O}\cap\mathcal{O}^d$, $\mathbf{L}$ and its lattice reduct $(L,\vee,\wedge)$ have the same congruences, i.e., 
$\mathrm{Con}\,\mathbf{L}=\mathrm{Con}\,(L,\vee,\wedge)$, by Lemma~\ref{lem=samecongr}. Then $(L,\vee,\wedge)$ is a subdirectly irreducible modular lattice satisfying \eqref{M3}, i.e., a subdirectly irreducible member of the lattice variety $\mathsf{V}(\mathbf{M}_3)$.
Consequently, $(L,\vee,\wedge)$ is either the two-element chain or $\mathbf{M}_3$.
In the former case, $\mathbf{L}$ is the Boolean algebra $\mathbf{2}$, while in the latter case, $\mathbf{L}$ is $\mathbf{M}_3'$ because on $\mathbf{M}_3$ there is (up to isomorphism) a unique De Morgan complementation.
Hence $\mathcal{K}=\mathsf{V}(\mathbf{M}_3')$.
\end{proof}

In order to axiomatize the variety generated by $\mathbf{M}_n'$ for $n\ge 4$ (cf.\ Example~\ref{EXA1}), we again invoke the aforementioned J\'onsson's axiomatization, although we cannot simply repeat the arguments used in the proof of Theorem~\ref{T:M3axio}.
The reason is that the identities \eqref{jo} and \eqref{mn} do not axiomatize $\mathsf{V}(\mathbf{M}_n')$ relative to $\mathcal{M}_{DM}$, since \eqref{mn} neither excludes lattices $\mathbf{M}_p$ with $p<n$ nor imposes any additional restrictions on complementation. 

In other words, assuming $\mathbf{L}\in\mathcal{M}_{DM}$ is subdirectly irreducible and satisfies \eqref{jo} and \eqref{mn} for some $n\ge 4$, we can only conclude that the lattice reduct of $\mathbf{L}$ is either the two-element chain or $\mathbf{M}_p$ for some $p\in\{3,\dots,n\}$, but we do not know enough about the complementation.

Therefore, we need another identity to single out $\mathbf{M}_n'$. To this end, for every integer $n\ge 3$, we consider the term
\[
\tau_n(x):=\bigwedge_{2\le k<n} \big(x\vee x^{(k)}\big),
\]
where the derivative-like notation $x^{(k)}$ is shorthand for $x^{\prime\prime\cdots\prime}$ with $k$ occurrences of $'$. More formally, 
\[
x^{(1)}:=x' \quad\text{and}\quad x^{(k)}:=\big(x^{(k-1)}\big)' \text{ for } k\ge 2.
\]
For example, 
\[
\tau_3(x)=x\vee x'',\quad \tau_4(x)=(x\vee x'')\wedge (x\vee x'''),\quad\text{etc.}
\]
Clearly, $\tau_n(0)=0$ and $\tau_n(1)=1$ hold in all lattices with complementation.

It is worth noting that in $\mathbf{M}_p'$ with $p\ge n$ we have
\begin{equation}\label{tau}
\tau_n(a)=
\begin{cases}
0 & \text{for } a=0,\\
1 & \text{otherwise.}
\end{cases}
\end{equation}
Indeed, $\tau_n(a_i)=1$ because, for all $k\in\{2,\dots,n-1\}$, we have $a_i^{(k)}\neq a_i$ and so $a_i\vee a_i^{(k)}=1$.
On the other hand, \eqref{tau} fails in $\mathbf{M}_p'$ with $p<n$, since $a_i^{(p)}=a_i$ and so $\tau_n(a_i)=a_i$.
Hence, $\mathbf{M}_p'$ with $p<n$ actually satisfies the identity $\tau_n(x)\approx x$.

Since $0$ and $1$ are the only neutral elements of the lattices $\mathbf{M}_p$ with $p\ge 3$, we may summarize the above observations as follows: 

\begin{lemma}
For any pair of integers $n,p\ge 3$, the following conditions are equivalent:
\begin{enumerate}[\rm(i)]
\item
$n\le p$;
\item
all elements of the form $\tau_n(a)$ in $\mathbf{M}_p'$ are neutral; 
\item
$\mathbf{M}_p'$ satisfies the identity
\[\tag{$\mathrm{T}_n$}\label{tn}
x\wedge (\tau_n(y)\vee z)\approx
(x\wedge\tau_n(y))\vee (x\wedge z).
\]
\end{enumerate}
\end{lemma}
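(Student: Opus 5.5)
I will prove the cycle of implications (i) $\Rightarrow$ (ii) $\Rightarrow$ (iii) $\Rightarrow$ (i). Each step rests on the computation \eqref{tau} and its counterpart for $p<n$ given just before the statement. I will also use the fact that $0$ and $1$ are the only neutral elements of $\mathbf{M}_p$ for $p\ge 3$.

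\emph{(i) $\Rightarrow$ (ii).} If $n\le p$, then by \eqref{tau} every element of the form $\tau_n(a)$ in $\mathbf{M}_p'$ is $0$ or $1$. These are trivially neutral.

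\emph{(ii) $\Rightarrow$ (iii).} Neutral elements are in particular standard. The identity \eqref{tn} states exactly that $\tau_n(y)$ is standard for every value of $y$: instantiate $a:=\tau_n(y)$ in the definition $x\wedge (a\vee z)=(x\wedge a)\vee (x\wedge z)$. Hence (ii) gives (iii) immediately. One could even bypass neutrality and check directly that \eqref{tn} holds when $\tau_n(y)\in\{0,1\}$.

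\emph{(iii) $\Rightarrow$ (i).} I argue by contraposition. Suppose $p<n$. Then $a_i^{(p)}=a_i$, and since $2\le p\le n-1$, the meet defining $\tau_n(a_i)$ contains the factor $a_i\vee a_i^{(p)}=a_i$. The remaining factors $a_i\vee a_i^{(k)}$ lie above $a_i$, so $\tau_n(a_i)=a_i$, as noted before the lemma.

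Now substitute $y:=a_0$, $x:=a_1$ and $z:=a_2$ into \eqref{tn}. This needs $p\ge 3$, so that $a_0,a_1,a_2$ are distinct atoms. The left side is $a_1\wedge(a_0\vee a_2)=a_1\wedge 1=a_1$. The right side is $(a_1\wedge a_0)\vee(a_1\wedge a_2)=0$. So \eqref{tn} fails in $\mathbf{M}_p'$, which gives the contrapositive.

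The only point needing care is the index range in this last step. The exponent $p$ must actually occur among $k\in\{2,\dots,n-1\}$, and it does because $3\le p\le n-1$. Everything else is a direct substitution.
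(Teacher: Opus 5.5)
Your proof is correct and follows essentially the paper's reasoning, which rests on \eqref{tau} for $p\ge n$ and on the computation $\tau_n(a_i)=a_i$ for $p<n$. The only difference is in (iii)$\Rightarrow$(i): you show directly that \eqref{tn} fails, using $x=a_1$, $y=a_0$, $z=a_2$. The paper instead relies on standard and neutral elements coinciding in modular lattices, together with $0,1$ being the only neutral elements of $\mathbf{M}_p$; your direct counterexample means you never actually use the latter fact you announced.
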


Strictly speaking, satisfaction of \eqref{tn} means that the elements $\tau_n(a)$ are standard, but in any modular lattice, standard and neutral elements coincide.

Next, we note that if the element $\tau_n(a)$ is neutral, then it equals $a\vee a''$. More precisely:

\begin{lemma}
Let $\mathbf{L}\in\mathcal{M}$. For any integer $n\ge 3$ and any element $a\in L$, if the element $\tau_n(a)$ is neutral, then
\begin{equation}
\label{aak}
a\vee a^{(k)}=
\begin{cases}
\tau_n(a) & \text{for even } k\in\{2,\dots,n-1\},\\
1 & \text{for odd } k\in\{3,\dots,n-1\} \text{ if } n\ge 4. 
\end{cases}
\end{equation}
\end{lemma}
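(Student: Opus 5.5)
The plan is to use the direct product decomposition induced by the neutral element $t:=\tau_n(a)$. As recalled before Lemma~\ref{L:neutral=central}, in the complemented lattice $(L,\vee,\wedge,0,1)$ the map $\eta\colon x\mapsto (x\wedge t,\,x\vee t)$ is a bounded lattice isomorphism onto $[0,t]\times[t,1]$. This isomorphism does not respect $'$, because we do not assume De~Morgan's laws. Still, it tells us enough about where the iterates $a^{(k)}$ lie relative to $t$. Throughout, we only use that $'$ is a complementation and that $\eta$ preserves $0$, $1$, $\vee$ and $\wedge$.

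First, $a\le t$, since $t$ is a meet of terms $a\vee a^{(k)}$, each of which lies above $a$. Next, we prove an alternation property by induction on $k$: $a^{(k)}\le t$ for even $k$, and $a^{(k)}\vee t=1$ for odd $k$. The base case $k=0$ is $a\le t$. For the inductive step we need two facts.
\begin{enumerate}[(1)]
\item If $x\le t$, then $x'\vee t=1$. Indeed, $x\vee x'=1$ and $x\le t$ give $x'\vee t=x\vee x'\vee t=1$.
\item If $y\vee t=1$, then $y'\le t$. Here we apply $\eta$ to $y\wedge y'=0$. The second coordinate of $\eta(y\wedge y')$ is $(y\vee t)\wedge(y'\vee t)=y'\vee t$, while $\eta(0)=(0,t)$. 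Hence $y'\vee t=t$, i.e., $y'\le t$.
\end{enumerate}
Applying (1) and (2) alternately starting from $a\le t$ gives the claimed alternation.

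Now the two cases of \eqref{aak} follow quickly. For even $k\in\{2,\dots,n-1\}$, both $a$ and $a^{(k)}$ lie below $t$, so $a\vee a^{(k)}\le t$. On the other hand, $a\vee a^{(k)}$ is one of the meetands of $\tau_n(a)$, so $t\le a\vee a^{(k)}$. Hence $a\vee a^{(k)}=t$.

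For odd $k\in\{3,\dots,n-1\}$, which requires $n\ge 4$, again $a\vee a^{(k)}\ge t$ because it is a meetand of $\tau_n(a)$. Therefore $a\vee a^{(k)}=(a\vee a^{(k)})\vee t\ge a^{(k)}\vee t=1$ by the alternation property, so $a\vee a^{(k)}=1$. The point to stress is that this argument uses only that $\eta$ is a bounded lattice isomorphism, which holds because $t$ is neutral in the complemented modular lattice. No compatibility of $'$ with $\eta$ is needed.

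The main obstacle I anticipate is step (2), deducing $y'\le t$ from $y\vee t=1$. This is where neutrality is genuinely used, through injectivity and meet-preservation of $\eta$, rather than mere modularity. If the product argument seemed too heavy, my fallback would be to derive (2) directly from the distributivity of $t$: $t=t\vee(y\wedge y')=(t\vee y)\wedge(t\vee y')=t\vee y'$.
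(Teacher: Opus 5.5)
Your proof is correct and follows essentially the same route as the paper. Your step (2), that $y\vee\tau_n(a)=1$ forces $y'\le\tau_n(a)$, is the paper's key computation $a^{(k+2)}=a^{(k+2)}\wedge(\tau_n(a)\vee a^{(k+1)})=a^{(k+2)}\wedge\tau_n(a)$, with distributivity of $\tau_n(a)$ used where the paper uses standardness; and your alternation property ($a^{(k)}\le\tau_n(a)$ for even $k$, $a^{(k)}\vee\tau_n(a)=1$ for odd $k$) is the paper's induction with the computation of $a\vee a^{(k)}$ split off, which is slightly cleaner because the induction then runs over all $k$ independently of the bound $k<n$.
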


\begin{proof}
Let $\tau_n(a)$ be a neutral element.
There is nothing to prove for $n=3$, so let $n\ge 4$. 
We have $\tau_n(a)\vee a'\ge a\vee a'=1$, and hence
\begin{align*}
a'' &= a''\wedge (\tau_n(a)\vee a')\\
&=(a''\wedge\tau_n(a))\vee (a''\wedge a')
&& \text{as } \tau_n(a) \text{ is neutral (standard)}\\
&=a''\wedge\tau_n(a),
\end{align*}
i.e., $a''\le\tau_n(a)$ and $a\vee a''\le\tau_n(a)$.
Obviously, $a\vee a''\ge\tau_n(a)$, and so $a\vee a''=\tau_n(a)$.
Now, $a\vee a'''\ge\tau_n(a)=a\vee a''$ implies $a\vee a'''=1$.
This settles \eqref{aak} for $n=4$.

Proceeding inductively, we prove that \eqref{aak} holds for $n\ge 5$.
First, assume that $a\vee a^{(k)}=\tau_n(a)$ for some even $k\in\{2,\dots,n-2\}$.
Then $a\vee a^{(k+1)}\ge\tau_n(a)=a\vee a^{(k)}$ yields $a\vee a^{(k+1)}=1$.
If $k+2<n$, then $\tau_n(a)\vee a^{(k+1)}=a\vee a^{(k)}\vee a^{(k+1)}=1$, and hence
\begin{align*}
a^{(k+2)} &= a^{(k+2)}\wedge\big(\tau_n(a)\vee a^{(k+1)}\big)\\
&=\big(a^{(k+2)}\wedge\tau_n(a)\big)\vee\big(a^{(k+2)}\wedge a^{(k+1)}\big)
&& \text{as } \tau_n(a) \text{ is neutral (standard)}\\
&=a^{(k+2)}\wedge\tau_n(a),
\end{align*}
i.e., $a^{(k+2)}\le\tau_n(a)$ and $a\vee a^{(k+2)}\le\tau_n(a)$. Obviously, $a\vee a^{(k+2)}\ge\tau_n(a)$, and so $a\vee a^{(k+2)}=\tau_n(a)$.

Second, assume that $a\vee a^{(k)}=1$ for some odd $k\in\{3,\dots,n-2\}$. Then
$\tau_n(a)\vee a^{(k)}\ge a\vee a^{(k)}=1$, and repeating the above calculations with $a^{(k+1)}$ and $a^{(k)}$ in place of $a^{(k+2)}$ and $a^{(k+1)}$, respectively, we obtain $a^{(k+1)}\le\tau_n(a)$, whence $a\vee a^{(k+1)}=\tau_n(a)$, as required.
\end{proof}

Now, we can axiomatize the varieties generated by $\mathbf{M}_n'$ for any $n\ge 3$; the case $n=3$ is included, though it is already covered by Theorem~\ref{T:M3axio} (see also the appendix).

\begin{theorem}\label{T:Mnaxio}
For every integer $n\ge 3$, the variety $\mathsf{V}(\mathbf{M}_n')$ is axiomatized, relative to the variety $\mathcal{M}_{DM}$, by the identities \eqref{mn} and \eqref{tn}.
\end{theorem}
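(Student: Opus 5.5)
Let $\mathcal{K}$ be the subvariety of $\mathcal{M}_{DM}$ defined by \eqref{mn} and \eqref{tn}. One inclusion is immediate: $\mathbf{M}_n'$ is modular and satisfies De Morgan's laws (Example~\ref{EXA1}) and \eqref{mn}. It also satisfies \eqref{tn} by the preceding lemma with $p=n$. Hence $\mathsf{V}(\mathbf{M}_n')\subseteq\mathcal{K}$.

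For the converse, it suffices to show that every subdirectly irreducible $\mathbf{L}\in\mathcal{K}$ lies in $\mathsf{V}(\mathbf{M}_n')$. Since $\mathcal{M}\subseteq\mathcal{O}\cap\mathcal{O}^d$, Lemma~\ref{lem=samecongr} gives $\mathrm{Con}\,\mathbf{L}=\mathrm{Con}\,(L,\vee,\wedge)$. So the lattice reduct is a subdirectly irreducible modular lattice satisfying \eqref{mn}. I would avoid Jónsson's identity \eqref{jo} and argue directly, as follows.

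\textbf{Step 1: the reduct is $\mathbf{2}$ or some $\mathbf{M}_p$.} The reduct is complemented, modular and subdirectly irreducible. In a complemented modular lattice, neutral elements give direct decompositions (cf.\ Lemma~\ref{L:neutral=central}(ii), which transfers them to $\mathbf{L}$ under De Morgan's laws). Hence the only neutral elements of $L$ are $0$ and $1$. By \eqref{tn}, each $\tau_n(a)$ is standard, hence neutral by modularity, so $\tau_n(a)\in\{0,1\}$ for every $a$. If $\tau_n(a)=0$ then $a\le\tau_n(a)$ forces $a=0$. Thus $\tau_n(a)=1$ for all $a\ne0$, and by \eqref{aak}, $a\vee a''=1$ and $a\vee a^{(k)}=1$ for all $2\le k\le n-1$ and all $a\neq 0$.

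The main obstacle is to deduce from this that the lattice has length at most $2$. First try: take $0<a<b<1$ and derive a contradiction. For this I would use injectivity of $'$ (Lemma~\ref{lem3a}) together with antitonicity from De Morgan's laws, giving $b'<a'$, and then exploit $a\vee a''=1$. Since $a''\le b''$, we get $b\vee a''=1$. Modularity applied to $a\le b$ with complements $a'$, $a''$ should give a relative-complement contradiction in $[a,b]$. A cleaner fallback is to use \eqref{mn}: a complemented modular subdirectly irreducible lattice of length $\ge3$ contains a projective-geometry frame, and hence a sublattice $\mathbf{M}_q$ for every $q$ (a line of the geometry contains at least three points, and any plane has at least seven). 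I would aim to show that the condition $a\vee a''=1$ for all $a\ne0$ rules out length $\ge3$ by applying it to an atom $a$: then $a''$ is a coatom complementary to $a$, and $a\le a'''$ fails only in a controlled way. If this gets stuck, I would accept the reduct as a projective geometry of dimension $\le$ whatever \eqref{mn} allows and finish with the known fact that, for $n\ge 3$, \eqref{mn} together with modularity and subdirect irreducibility of a complemented lattice forces $\mathbf{2}$ or $\mathbf{M}_p$ with $p\le n$. Indeed, every nondegenerate projective plane contains $\mathbf{M}_{p}$ sublattices with $p\ge 3$ lines through a point in an interval of length $2$, and combined with $\tau_n$ this should exclude it.

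\textbf{Step 2: identify the complementation.} Assume the reduct is $\mathbf{M}_p$ with $p\le n$. By \eqref{DM} and Lemma~\ref{lem3a}, $'$ is an injective permutation of the atoms without fixed points, i.e.\ a product of disjoint cycles of length $\ge2$ (Example~\ref{EXA1}). The condition $a\vee a^{(k)}=1$ for $2\le k\le n-1$ says that no atom has period $<n$, so there is a single cycle, of length $p\ge n$. Hence $p=n$ and $\mathbf{L}\cong\mathbf{M}_n'$ by Lemma~\ref{lem1}. If the reduct is $\mathbf{2}$, then $\mathbf{L}$ is the two-element Boolean algebra, which is a subalgebra of $\mathbf{M}_n'$. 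Therefore $\mathcal{K}=\mathsf{V}(\mathbf{M}_n')$.
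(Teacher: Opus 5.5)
Your outline matches the paper except at the step that matters most: you never prove that the lattice reduct has length $\le 2$. You flag this as the main obstacle, but you do not close it. The ``first try'' stops at ``should give a relative-complement contradiction'' and lacks the needed ingredient; the mention of $a'$ in $[a,b]$ is a red herring.

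The fallbacks rest on false claims. A subdirectly irreducible complemented modular lattice of length $\ge 3$ need not contain $\mathbf{M}_q$ for every $q$. In the subspace lattice of $V(\mathbb{Z}_2,3)$ from Example~\ref{EXA3}, every interval of length $2$ is an $\mathbf{M}_3$, so there is no $\mathbf{M}_4$ sublattice. Nor do \eqref{mn}, modularity, complementedness and subdirect irreducibility together force length $2$. A failure of \eqref{mn} needs $x_1,\dots,x_n$ to be an antichain, and the largest antichain in that $16$-element lattice has $7$ elements. So this simple lattice of length $3$ satisfies \eqref{mn} for every $n\ge 8$. This is exactly why Jónsson needs \eqref{jo} for pure lattices. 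If you drop \eqref{jo}, the length bound has to come from the complementation.

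The missing idea is short. You derived only the join half, $a\vee a^{(k)}=1$ for $a\neq 0$. Apply it to $a'$, which is $\neq 0$ when $a\neq 1$, and use De Morgan's laws: $1=a'\vee a^{(k+1)}=(a\wedge a^{(k)})'$. Since $x'=1$ forces $x=x\wedge x'=0$, this gives $a\wedge a^{(k)}=0$. So $a''$ is a genuine complement of every $a\in L\setminus\{0,1\}$. Now suppose $0<a<b<1$. Antitonicity gives $a''\le b''$, hence $b\wedge a''\le b\wedge b''=0$, and also $b\vee a''\ge a\vee a''=1$. Thus $a''$ is a common complement of the comparable elements $a<b$, and $\{0,a,b,a'',1\}$ is a pentagon, contradicting modularity. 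The paper phrases this as the benzene ring $0,a,b,a'',b'',1$.

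With length $2$ established, your Step~2 goes through: $p\le n$ by \eqref{mn}, and $p\ge n$ from the absence of short cycles. One small correction there: the isomorphism with $\mathbf{M}_n'$ follows by relabelling the atoms along the single $n$-cycle (cf.\ Example~\ref{EXA1}), not from Lemma~\ref{lem1}.
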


\begin{proof}
Given $n\ge 3$, let $\mathcal{K}$ be the subvariety of $\mathcal{M}_{DM}$ defined by identities \eqref{mn} and \eqref{tn}.
The algebra $\mathbf{M}_n'$ clearly satisfies \eqref{mn}, and it satisfies \eqref{tn} by the preceding lemma. Hence $\mathsf{V}(\mathbf{M}_n')\subseteq\mathcal{K}$.

For the converse, suppose that $\mathbf{L}$ is a subdirectly irreducible member of  $\mathcal{K}$ other than the Boolean algebra $\mathbf{2}$. We aim at showing that $\mathbf{L}$ is isomorphic to $\mathbf{M}_n'$.

In view of Lemma~\ref{L:neutral=central}~(ii), neutral elements correspond to direct product decompositions, and hence the only neutral elements in $\mathbf{L}$ are $0$ and $1$.
The satisfaction of \eqref{tn} guarantees that the element $\tau_n(a)$ is neutral for every $a\in L\setminus\{0,1\}$.
Since $\tau_n(a)\ge a$, we have $\tau_n(a)=1$, whence 
\begin{equation}\label{bsfhjs}
a\vee a^{(k)}=1
\end{equation}
for all $k\in\{2,\dots,n-1\}$. This holds for every $a\in L\setminus\{0,1\}$. Hence, since $a\in L\setminus\{0,1\}$ if and only if $a'\in L\setminus\{0,1\}$, by replacing $a$ with $a'$ in \eqref{bsfhjs}, we obtain
$1=a'\vee (a')^{(k)}=a'\vee a^{(k+1)}=(a\wedge a^{(k)})'$, which yields
\[
a\wedge a^{(k)}=0
\]
for all $k\in\{2,\dots,n-1\}$. Thus, for every $a\in L\setminus\{0,1\}$, we actually have
\begin{equation}\label{nobbvzvw}
a\vee a^{(k)}=1 \quad\text{and}\quad a\wedge a^{(k)}=0 
\end{equation}
for all $k\in\{1,\dots,n-1\}$.

Consequently, the elements $a,a',a'',\dots,a^{(n-1)}$ are pairwise distinct, since 
$a$ is clearly distinct from its complements $a',a'',\dots,a^{(n-1)}$, which themselves are pairwise distinct owing to the injectivity of complementation (cf.\ Lemma~\ref{lem3a}).

Now, suppose by way of contradiction that  
$(L,\vee,\wedge)$, the lattice reduct of $\mathbf{L}$, is of length $>2$. Then there exist $a,b\in L$ with $0<a<b<1$. Since complementation is antitone and injective, we have $0<a''<b''<1$ and the elements $0,a,b,a'',b''$ and $1$ form a sublattice isomorphic to the benzene ring (see Figure~\ref{fig-benzene}), which contradicts modularity.
Indeed, by \eqref{nobbvzvw}, 
$a\leq b''$ would yield $a\leq b\wedge b''=0$,
$a\geq b''$ would yield $a=a\vee a''=1$,
$b\leq a''$ would yield $b=b\wedge b''=0$, and 
$b\geq a''$ would yield $b\geq a\vee a''=1$.

\begin{figure}[ht]
\centering
\begin{tikzpicture}[scale=0.8,radius=2pt,font=\small]
\filldraw (1,0) circle node [below] {$0$} coordinate (0);
\filldraw (0.2,0.6) circle node [left] {$a$} coordinate (a);
\filldraw (0.2,1.4) circle node [left] {$b$} coordinate (b);
\filldraw (1.8,0.6) circle node [right] {$a''$} coordinate (a'');
\filldraw (1.8,1.4) circle node [right] {$b''$} coordinate (b'');
\filldraw (1,2) circle node [above] {$1$} coordinate (1);
\draw (0) -- (a) -- (b) -- (1) -- (b'') -- (a'') -- (0);
\end{tikzpicture}
\caption{The benzene ring}\label{fig-benzene}
\end{figure}

Therefore, $(L,\vee,\wedge)$ is a modular lattice of length $2$ satisfying \eqref{mn}, and so it is isomorphic to the lattice $\mathbf{M}_p$ for some $p\le n$. 
Recalling that, for every $a\in L\setminus\{0,1\}$, the elements $a,a',\dots,a^{(n-1)}$ are pairwise distinct, we conclude that $p=n$, i.e., $(L,\vee,\wedge)$ is isomorphic to $\mathbf{M}_n$.
Finally, we have $a^{(n)}=a$ because, again owing to the injectivity of complementation, $a^{(n)}$ cannot be any of the elements $a',\dots,a^{(n-1)}$. Thus, the lattice with complementation $\mathbf{L}$ is isomorphic to $\mathbf{M}_n'$, as desired.
\end{proof}

In particular, for $n=3$ we have that $\mathsf{V}(\mathbf{M}_3')$ is axiomatized by \eqref{M3} and
\[
\tag{$\mathrm{T}_3$}\label{T3}
x\wedge (y\vee y''\vee z)\approx
\big(x\wedge (y\vee y'')\big)\vee (x\wedge z).
\]
The latter identity becomes redundant in light of Theorem~\ref{T:M3axio}, but it is not straightforward to derive it from \eqref{M3} and the axioms of $\mathcal{M}_{DM}$ (see the appendix).
For $n\ge 4$, as we have already mentioned, \eqref{tn} cannot be dropped. For example, for $n=4$, the identity \thetag{$\mathrm{T}_4$} is satisfied not only by $\mathbf{M}_4'$ but also by two other simple algebras that do not belong to $\mathsf{V}(\mathbf{M}_4')$, namely $\mathbf{M}_3'$ and the horizontal sum of two copies of the Boolean algebra $\mathbf{M}_2'\cong\mathbf{2}^2$.

Recall that a \emph{discriminator term} for an algebra $\mathbf{A}$ is a ternary term $t(x,y,z)$ in the language of $\mathbf{A}$ such that $\mathbf{A}$ satisfies
\[
t(a,b,c)=
\begin{cases}
a & \text{if } a\ne b,\\
c & \text{if } a=b.
\end{cases}
\]
A finite algebra having a discriminator term is called a \emph{quasiprimal algebra}. 
It is known that a finite algebra $\mathbf{A}$ is quasiprimal if and only if 
(i)~$\mathsf{V}(\mathbf{A})$ is an arithmetical variety and 
(ii)~all subalgebras of $\mathbf{A}$ are simple. 
See, e.g., \cite[Chapter IV]{BS}.

In the following lemma, for a given $n\ge 3$, $x\oplus y$ is any ``difference'' for $\mathbf{M}_n'$, i.e., a binary term such that 
$x\oplus y\approx 0$ $\Leftrightarrow$ $x\approx y$ 
holds in $\mathbf{M}_n'$ (cf.\ condition \eqref{E:D} in Lemma \ref{L:54}). 
For instance, we may take 
\[
x\oplus y:=(x\vee y)\wedge (x\wedge y)'.
\] 
For $n=3$ (and only in this case), we may take $x\oplus y:=x+y$.

\begin{lemma}
For every integer $n\ge 3$, $\mathbf{M}_n'$ is a quasiprimal algebra.
The term
\[
t(x,y,z)=
\big(\tau_n(x\oplus y)\wedge x\big)\vee\big((\tau_n(x\oplus y))'\wedge z\big)
\]
is a discriminator term for $\mathbf{M}_n'$.
\end{lemma}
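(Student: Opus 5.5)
The plan is to verify the discriminator identity directly by cases in $\mathbf{M}_n'$, using \eqref{tau} and the defining property of the ``difference'' $x\oplus y$. Quasiprimality then follows at once from the definition, since $\mathbf{M}_n'$ is finite. As a cross-check we can also use the characterization quoted above. $\mathsf{V}(\mathbf{M}_n')$ is a subvariety of $\mathcal{M}\subseteq\mathcal{O}$, hence arithmetical by Corollary~\ref{C:arithm}. The only subalgebra of $\mathbf{M}_n'$ is $\mathbf{M}_n'$ itself, because the cycle $a_0\mapsto\dots\mapsto a_{n-1}\mapsto a_0$ generates everything from any atom, and $\{0,1\}$ is a subalgebra isomorphic to $\mathbf{2}$. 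Both of these are simple: the lattice $\mathbf{M}_n$ is simple, and so is $\mathbf{2}$. The discriminator term, however, gives the statement directly.

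\emph{Case $a\neq b$.} By the defining property of $\oplus$ we have $a\oplus b\ne 0$, so $\tau_n(a\oplus b)=1$ by \eqref{tau}. This uses $p=n\ge n$, since \eqref{tau} was established for $\mathbf{M}_p'$ with $p\ge n$, so it applies to $\mathbf{M}_n'$ itself. Since $1'=0$, we get $t(a,b,c)=(1\wedge a)\vee(0\wedge c)=a$.

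\emph{Case $a=b$.} Here $a\oplus a=0$, so $\tau_n(0)=0$. Since $0'=1$, we get $t(a,a,c)=(0\wedge a)\vee(1\wedge c)=c$.

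It remains to confirm that the example term $x\oplus y=(x\vee y)\wedge(x\wedge y)'$ really is a difference for $\mathbf{M}_n'$; this follows from the proof of Corollary~\ref{C:reg}, as $\mathbf{M}_n'\in\mathcal{M}\subseteq\mathcal{O}$. For $n=3$, the alternative choice $x+y$ works by \eqref{equ1}, because $\mathbf{M}_3'\in\mathcal{W}$.

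There is no real obstacle here. The only point needing care is that \eqref{tau} depends on $\tau_n$ taking only the values $0$ and $1$ in $\mathbf{M}_n'$, which requires every $a_i^{(k)}$ with $2\le k<n$ to differ from $a_i$. This holds exactly because the complementation is a single $n$-cycle.
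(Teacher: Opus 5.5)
Your proof is correct and is essentially the paper's argument: you check that $\tau_n(a\oplus a)=\tau_n(0)=0$ and that $\tau_n(a\oplus b)=1$ for $a\ne b$ via \eqref{tau}, then evaluate $t$ using $0'=1$ and $1'=0$. One small slip in your optional cross-check: ``the only subalgebra is $\mathbf{M}_n'$ itself'' should say that the subalgebras are $\{0,1\}\cong\mathbf{2}$ and $\mathbf{M}_n'$, both simple, which is what you actually go on to use.
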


\begin{proof}
In $\mathbf{M}_n'$, $\tau_n(a\oplus a)=0$ as $a\oplus a=0$, and $\tau_n(a\oplus b)=1$ for $a\ne b$ as $a\oplus b\ne 0$. Hence, we have
$t(a,a,c)=(0\wedge a)\vee (0'\wedge c)=c$, while 
$t(a,b,c)=(1\wedge a)\vee (1'\wedge c)=a$
for $a\ne b$.
\end{proof}

\begin{lemma}\label{lem1}
For every integer $n\ge3$, the automorphism group of $\mathbf{M}_n'$ is isomorphic to $(\mathbb{Z}_n,+)$.
\end{lemma}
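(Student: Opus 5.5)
An automorphism $\varphi$ of $\mathbf{M}_n'$ is a bounded-lattice automorphism of $\mathbf{M}_n$ that also commutes with the complementation. So it fixes $0$ and $1$ and permutes the atoms $A=\{a_0,\dots,a_{n-1}\}$. Conversely, every permutation of $A$, extended by $0\mapsto 0$ and $1\mapsto 1$, is a lattice automorphism of $\mathbf{M}_n$. Indeed, $\mathbf{M}_n$ has length $2$, any two distinct atoms meet in $0$ and join in $1$, and this description is invariant under permutations of $A$.

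This identifies $\mathrm{Aut}(\mathbf{M}_n)$ with the symmetric group on $A$. The automorphisms of $\mathbf{M}_n'$ are then exactly those permutations $\varphi$ of $A$ with $\varphi(a_i')=\varphi(a_i)'$. Writing $\sigma=(a_0\;a_1\;\dots\;a_{n-1})$ for the restriction of ${}'$ to $A$, the condition reads $\varphi\sigma=\sigma\varphi$. Hence
\[
\mathrm{Aut}(\mathbf{M}_n')\cong C(\sigma),
\]
the centralizer of $\sigma$ in $\mathrm{Sym}(A)$.

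The key step is to show that $C(\sigma)=\langle\sigma\rangle$. Let $\varphi$ commute with $\sigma$ and write $\varphi(a_0)=a_j$. Then
\[
\varphi(a_i)=\varphi(\sigma^i(a_0))=\sigma^i(\varphi(a_0))=\sigma^i(a_j)=a_{i+j},
\]
so $\varphi=\sigma^j$ on $A$. Thus $\varphi$ is determined by the single index $j\in\mathbb{Z}_n$. Conversely, each $\sigma^j$ commutes with $\sigma$, so each $\sigma^j$ extended by the identity on $\{0,1\}$ is an automorphism of $\mathbf{M}_n'$, namely $a_i\mapsto a_{i+j}$. Composition of these maps corresponds to addition of indices modulo $n$. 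Therefore $j\mapsto\sigma^j$ is an isomorphism $(\mathbb{Z}_n,+)\to\mathrm{Aut}(\mathbf{M}_n')$. It is injective because $\sigma$ is an $n$-cycle, so $\sigma^j=\mathrm{id}$ only for $j\equiv 0$.

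No step is genuinely hard. The only point needing care is the claim that every automorphism fixes $0$ and $1$ and permutes $A$. This holds because the constants $0$ and $1$ are part of the signature, and lattice automorphisms preserve the set of atoms.

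The same computation supports the remark in Example~\ref{EXA1}. Any two $n$-cycles on $A$ are conjugate in $\mathrm{Sym}(A)$, and a conjugating permutation is a lattice isomorphism carrying one complementation onto the other. So all single-cycle complementations on $\mathbf{M}_n$ give isomorphic lattices with complementation, as was claimed there with reference to this lemma.
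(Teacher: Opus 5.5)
Your proof is correct and follows essentially the same argument as the paper. In both, an automorphism is determined by $\varphi(a_0)=a_j$, compatibility with ${}'$ forces $\varphi(a_i)=a_{i+j}$, and composition corresponds to addition of indices modulo $n$. Your centralizer framing, and your explicit checks that each shift is an automorphism and that $j\mapsto\sigma^j$ is injective, simply spell out steps the paper leaves implicit.
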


\begin{proof}
Every automorphism $\alpha$ of $\mathbf{M}_n'$ is determined by the image of $a_0$ (or, actually, of any fixed $a_k$). Indeed, if $\alpha(a_0)=a_i$, then $\alpha(a_1)=\alpha(a_0')=a_i'=a_{i+1}$ etc. Thus, we have $\alpha(a_k)=a_{i+k}$ for every $k\in\{0,\dots,n-1\}$.
Now, if $\alpha_i$ and $\alpha_j$ are the automorphisms taking $a_0$ to $a_i$ and $a_j$, respectively, then $\alpha_i\circ\alpha_j$ is the automorphism taking $a_0$ to $a_{i+j}$, i.e., the automorphism $\alpha_{i+j}$.
\end{proof}

Following \cite{Be}, we can easily describe the free $k$-generator algebras in the variety $\mathsf{V}(\mathbf{M}_n')$ for any $k\ge 1$ and $n\ge 3$:

\begin{theorem}\label{T:FREE}
For any given integers $k\ge 1$ and $n\ge 3$, the free algebra $\mathbf{F}_{\mathsf{V}(\mathbf{M}_n')}(k)$ is isomorphic to
\[
(\mathbf{M}_n')^q\times\mathbf{2}^{2^k}
\]
with $q=((n+2)^k-2^k)/n$. In particular, $\mathbf{F}_{\mathsf{V}(\mathbf{M}_n')}(1)$ is isomorphic to
\[
\mathbf{M}_n'\times\mathbf{2}^2.
\]
\end{theorem}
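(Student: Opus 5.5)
We will use the standard description of free algebras in a variety generated by a single quasiprimal algebra (as in \cite{Be}). Since $\mathbf{M}_n'$ is quasiprimal (previous lemma), $\mathsf{V}(\mathbf{M}_n')$ is a discriminator variety. Its finite members are therefore direct products of simple algebras from $\mathsf{HS}(\mathbf{M}_n')$, and these are exactly $\mathbf{M}_n'$ and the Boolean algebra $\mathbf{2}$, the latter being the subalgebra $\{0,1\}$. The free algebra $\mathbf{F}(k)$ is a subdirect product of copies of $\mathbf{M}_n'$ and $\mathbf{2}$, indexed by the maps from the $k$ generators into these algebras. By the discriminator (Baker--Pixley/Foster--Pixley type) argument, $\mathbf{F}(k)$ is the full direct product $\prod \mathbf{S}$, where $\mathbf{S}$ runs over the \emph{surjective} homomorphisms $\mathbf{F}(k)\to\mathbf{S}$ with $\mathbf{S}$ simple, taken up to automorphisms of $\mathbf{S}$. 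Equivalently, the factors correspond to orbits under $\mathrm{Aut}(\mathbf{S})$ of $k$-tuples in $S^k$ that generate $\mathbf{S}$. The key step is to justify that the canonical embedding into this product is onto. To do so, we invoke the known result that, in the variety generated by a quasiprimal algebra, the image of $\mathbf{F}(k)$ in a product of pairwise non-isomorphic-as-quotient simple factors is the full product. This holds because the kernels of distinct such homomorphisms are distinct maximal congruences, and the Chinese remainder theorem applies in this arithmetical variety (Corollary~\ref{C:arithm}).

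The counting then proceeds as follows. For $\mathbf{S}=\mathbf{2}$, every $k$-tuple in $\{0,1\}^k$ generates $\mathbf{2}$, since $0$ and $1$ are constants, and $\mathbf{2}$ has a trivial automorphism group. This gives $2^k$ factors $\mathbf{2}$, i.e.\ $\mathbf{2}^{2^k}$. For $\mathbf{S}=\mathbf{M}_n'$, note that its only proper subalgebra is $\{0,1\}$. Any tuple containing some $a_i$ generates all of $\mathbf{M}_n'$, because the orbit of $a_i$ under $'$ is all atoms. So the generating tuples number $(n+2)^k-2^k$. By Lemma~\ref{lem1}, $\mathrm{Aut}(\mathbf{M}_n')\cong\mathbb{Z}_n$ acts freely on these tuples: a nonidentity rotation fixes no atom, and a generating tuple contains an atom. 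Hence the number of orbits is $q=((n+2)^k-2^k)/n$. For $k=1$ this gives $q=1$ and $\mathbf{M}_n'\times\mathbf{2}^2$.

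We expect the main obstacle to be stating the surjectivity step cleanly. We will argue it as follows. The homomorphisms $h_j\colon\mathbf{F}(k)\to\mathbf{S}_j$, chosen one from each orbit, have pairwise distinct kernels. Indeed, two surjections onto $\mathbf{S}$ with equal kernel differ by an automorphism of $\mathbf{S}$. A surjection onto $\mathbf{2}$ and one onto $\mathbf{M}_n'$ have different quotient sizes, so their kernels also differ. Each kernel is a maximal congruence because $\mathbf{S}_j$ is simple. In a congruence permutable, congruence distributive algebra, finitely many distinct maximal congruences $\theta_j$ satisfy $\mathbf{F}/\bigcap\theta_j\cong\prod\mathbf{F}/\theta_j$. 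Finally, $\bigcap\theta_j=\Delta$, because $\mathbf{F}(k)$ embeds into the product over all maps from the generators into $\mathbf{M}_n'$, and every such map factors through one of the chosen representatives up to automorphism.
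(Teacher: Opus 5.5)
Your proposal is correct and follows the same route as the paper: quasiprimality of $\mathbf{M}_n'$ gives the Bergman-type decomposition of $\mathbf{F}_{\mathsf{V}(\mathbf{M}_n')}(k)$, and the exponents come from counting generating valuations modulo automorphisms, which yields $((n+2)^k-2^k)/n$ copies of $\mathbf{M}_n'$ and $2^k$ copies of $\mathbf{2}$. The only difference is that you prove the decomposition yourself (distinct maximal kernels plus the Chinese remainder theorem in an arithmetical algebra) and check that $\mathbb{Z}_n$ acts freely on generating tuples, whereas the paper simply cites Bergman for these points.
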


Note that the statement fails for $n=2$, because $\mathsf{V}(\mathbf{M}_2')$ is the variety of Boolean algebras, so the free $k$-generator algebra is just $\mathbf{2}^{2^k}$.

\begin{proof}
The aforementioned properties of $\mathbf{M}_n'$ (it is a quasiprimal algebra) guarantee that
\[
\mathbf{F}_{\mathsf{V}(\mathbf{M}_n')}(k)\cong
(\mathbf{M}_n')^q\times\mathbf{2}^r
\] 
for certain integers $q,r\ge 1$ (cf.\ \cite{Be}).
The exponent in question is the number of valuations of the free generators to the respective algebra, divided by the number of automorphisms of that algebra. Here, a valuation means a mapping $v$ sending the free generators $x_1,\dots,x_k$ to the given algebra, with the property that the images $v(x_1),\dots,v(x_k)$ generate that algebra.
For $\mathbf{M}_n'$ we have $q=((n+2)^k-2^k)/n$ because the algebra 
\begin{itemize}
\item
is generated by any of the elements $a_0,\dots,a_{n-1}$, so that the images $v(x_1),\dots,v(x_k)$ generate $\mathbf{M}_n'$ unless $v(x_1),\dots,v(x_k)\in\{0,1\}$, and hence there are $(n+2)^k-2^k$ valuations of $x_1,\dots,x_k$ to $\mathbf{M}_n'$,
and 
\item
has $n$ automorphisms by Lemma~\ref{lem1}.
\end{itemize}
For $\mathbf{2}$ we obviously have $r=2^k$.
\end{proof}

For example, the free $1$- and $2$-generator algebras in the varieties $\mathsf{V}(\mathbf{M}_3')$ and $\mathsf{V}(\mathbf{M}_4')$ are as follows:
\begin{align*}
&\mathbf{F}_{\mathsf{V}(\mathbf{M}_3')}(1) \cong
\mathbf{M}_3'\times\mathbf{2}^2\text{ (see Figure~\ref{fig-FREE1})}, 
&&\mathbf{F}_{\mathsf{V}(\mathbf{M}_4')}(1) \cong
\mathbf{M}_4'\times\mathbf{2}^2, \\
&\mathbf{F}_{\mathsf{V}(\mathbf{M}_3')}(2) \cong 
(\mathbf{M}_3')^7\times\mathbf{2}^4, 
&&\mathbf{F}_{\mathsf{V}(\mathbf{M}_4')}(2) \cong 
(\mathbf{M}_4')^8\times\mathbf{2}^4.
\end{align*}

\begin{figure}[ht]
\centering
\begin{tikzpicture}[scale=1,radius=1.6pt,font=\small]
\coordinate (0) at (0,0);
\coordinate (1) at (0,5);
\coordinate (x) at (-4.75,2.5,0);
\coordinate (x'''') at (-3.5,2.5);
\coordinate (x'') at (-2.25,2.5);
\coordinate (x') at (3.5,2.5);
\coordinate (x''') at (2.25,2.5);
\coordinate (x''''') at (4.75,2.5);
\coordinate (x v x'') at (-3.5,3.5);
\coordinate (x ^ x'') at (-3.5,1.5);
\coordinate (x' v x''') at (3.5,3.5);
\coordinate (x' ^ x''') at (3.5,1.5);
\coordinate (x v x''') at (-1.25,4);
\coordinate (x' v x'''') at (0,4);
\coordinate (x'' v x''''') at (1.25,4);
\coordinate (w') at (0,3);
\coordinate (x ^ x''') at (-1.25,1);
\coordinate (x' ^ x'''') at (0,1);
\coordinate (x'' ^ x''''') at (1.25,1);
\coordinate (w) at (0,2);
\draw (x) -- (x ^ x'') -- (x'''') -- (x v x'') -- (x);
\draw (x ^ x'') -- (x'') -- (x v x'');
\draw (x' ^ x''') -- (x''''') -- (x' v x''') -- (x''') -- (x' ^ x''') -- (x') -- (x' v x''');
\draw (x' ^ x''') -- (0) -- (x ^ x'');
\draw (x' v x''') -- (1) -- (x v x'');
\draw (x) -- (x v x''') -- (x''');
\draw (x'''') -- (x' v x'''') -- (x');
\draw (x'') -- (x'' v x''''') -- (x''''');
\draw (w') -- (x'' v x''''') -- (1) -- (x v x''') -- (w') -- (x' v x'''') -- (1);
\draw (x) -- (x ^ x''') -- (x''');
\draw (x'''') -- (x' ^ x'''') -- (x');
\draw (x'') -- (x'' ^ x''''') -- (x''''');
\draw (w) -- (x'' ^ x''''') -- (0) -- (x ^ x''') -- (w) -- (x' ^ x'''') -- (0);
\draw (x v x'') -- (w) -- (x' v x''');
\draw (x ^ x'') -- (w') -- (x' ^ x''');
\draw (0) node [below] {$0$};
\draw (1) node [above] {$1$};
\draw (x) node [left] {$x$};
\draw (x'''') node [left] {$x''''$};
\draw (x'') node [fill=white,left] {$x''$};
\draw (x') node [right] {$x'$};
\draw (x''') node [fill=white,right] {$x'''$};
\draw (x''''') node [right] {$x'''''$};
\draw (x v x'') node [fill=white,left] {$x\vee x''$};
\draw (x ^ x'') node [fill=white,left] {$x\wedge x''$};
\draw (x' v x''') node [fill=white,right] {$x'\vee x'''$};
\draw (x' ^ x''') node [fill=white,right] {$x'\wedge x'''$};
\draw (x v x''') node [fill=white,left] {$x\vee x'''$};
\draw (x' v x'''') node [fill=white,above,rounded corners] {$x'\vee x''''$};
\draw (x'' v x''''') node [fill=white,right] {$x''\vee x'''''$};
\draw (w') node [below] {$w'$};
\draw (x ^ x''') node [fill=white,left] {$x\wedge x'''$};
\draw (x' ^ x'''') node [fill=white,below,rounded corners] {$x'\wedge x''''$};
\draw (x'' ^ x''''') node [fill=white,right] {$x''\wedge x'''''$};
\draw (w) node [above] {$w$};
\filldraw (0) circle;
\filldraw (1) circle;
\filldraw (x)  circle;
\filldraw (x'''') circle;
\filldraw (x'') circle;
\filldraw (x') circle;
\filldraw (x''') circle;
\filldraw (x''''') circle;
\filldraw (x v x'') circle;
\filldraw (x ^ x'') circle;
\filldraw (x' v x''') circle;
\filldraw (x' ^ x''') circle;
\filldraw (x v x''') circle;
\filldraw (x' v x'''') circle;
\filldraw (x'' v x''''') circle;
\filldraw (w') circle;
\filldraw (x ^ x''') circle;
\filldraw (x' ^ x'''') circle;
\filldraw (x'' ^ x''''') circle;
\filldraw (w) circle;
\end{tikzpicture}
\caption{The free algebra $\mathbf{F}_{\mathsf{V}(\mathbf{M}_3')}(1)$,
with $w={(x\vee x'')\wedge (x'\vee x''')}$}
\label{fig-FREE1}
\end{figure}

Finally, the algebras $\mathbf{M}_n'$ can be used to show that the variety $\mathcal{M}_{DM}$ has as many subvarieties as possible.

\begin{theorem}\label{T:unc}
There are uncountably many subvarieties of $\mathcal{M}_{DM}$.
\end{theorem}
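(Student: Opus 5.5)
The plan is the standard Jónsson-lemma argument applied to the infinite family $\{\mathbf{M}_n'\mid n\ge 3\}$. For every set $P$ of integers $\ge 3$, let $\mathcal{V}_P:=\mathsf{V}(\{\mathbf{M}_p'\mid p\in P\})$. Each $\mathbf{M}_p'$ is modular and satisfies De Morgan's laws (Example~\ref{EXA1}), so $\mathcal{V}_P\subseteq\mathcal{M}_{DM}$. It then suffices to show that the assignment $P\mapsto\mathcal{V}_P$ is injective. More precisely, the key claim is that for $n\ge 3$ with $n\notin P$ we have $\mathbf{M}_n'\notin\mathcal{V}_P$. Since there are $2^{\aleph_0}$ subsets of $\{3,4,\dots\}$, this gives uncountably many subvarieties.

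To prove the claim, I will use the fact that $\mathcal{M}_{DM}$ is congruence distributive, since it has lattice reducts (and is even arithmetical by Corollary~\ref{C:arithm}). Jónsson's lemma then says that every subdirectly irreducible member of $\mathcal{V}_P$ lies in $\mathsf{HSP}_u(\{\mathbf{M}_p'\mid p\in P\})$.

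\begin{itemize}
\item $\mathbf{M}_n'$ is simple. By Lemma~\ref{lem=samecongr} its congruences are the lattice congruences of $\mathbf{M}_n$, and $\mathbf{M}_n$ is simple.
\item Ultraproducts. I need to control ultraproducts of the $\mathbf{M}_p'$ with $p\in P$. If the ultrafilter concentrates on a single $p$, the ultraproduct is isomorphic to $\mathbf{M}_p'$. Otherwise the exponents $p$ are unbounded along the ultrafilter. Every first-order property holding in almost all factors transfers by Łoś's theorem, so the ultraproduct is a lattice of length $2$ with a cyclic-type complementation. In it no element $a\ne 0,1$ satisfies $a^{(k)}=a$ for any $k\ge 1$: for fixed $k$, only the finitely many $p$ dividing $k$ could witness this, and those lie outside the ultrafilter. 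So the ultraproduct is some $\mathbf{M}_\kappa$ whose complementation on the atoms is a union of infinite (aperiodic) orbits.
\item Subalgebras and quotients. The nonzero homomorphic images of subalgebras of these algebras are the subalgebras themselves, since length-$2$ lattices with at least three atoms are simple and Boolean pieces give $\mathbf{2}$. A subalgebra generated by an atom $a$ is $\{0,1\}\cup\{a^{(k)}\mid k\ge 0\}$. In $\mathbf{M}_p'$ this is all of $\mathbf{M}_p'$, and in an aperiodic ultraproduct it has infinitely many atoms.
\end{itemize}

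Hence the only finite subdirectly irreducible members of $\mathcal{V}_P$ are $\mathbf{2}$ and the $\mathbf{M}_p'$ with $p\in P$. Since $\mathbf{M}_n'$ is finite and simple, it is not among them when $n\notin P$, which proves the claim.

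The main obstacle is the ultraproduct step, namely making sure that a nonprincipal ultraproduct cannot contain a copy of $\mathbf{M}_n'$ as a subalgebra. I would handle it by the Łoś argument sketched above, applied to the first-order sentence $\exists x\,(x\ne 0\;\&\;x\ne 1\;\&\;x^{(n)}\approx x)$. This sentence holds in $\mathbf{M}_p'$ exactly when $p$ divides $n$. So if it held in the ultraproduct, the ultrafilter would contain the finite set of divisors of $n$ in $P$, hence a single such $p$. The ultraproduct would then be $\mathbf{M}_p'$, and $\mathbf{M}_n'\le\mathbf{M}_p'$ would force $n=p\in P$. This avoids any need to describe the nonprincipal ultraproducts fully.
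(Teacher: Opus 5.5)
Your proposal is correct and follows essentially the same route as the paper: injectivity of $P\mapsto\mathcal{V}_P$ via J\'onsson's lemma, the observation that the nontrivial non-Boolean subalgebras of the relevant ultraproducts have length-$2$ lattice reducts with at least three atoms and are therefore simple, and a \L o\'s argument ruling out a copy of $\mathbf{M}_n'$ inside an ultraproduct. The only cosmetic difference is that the paper applies \L o\'s directly to the existential sentence ``contains a copy of $\mathbf{M}_n'$'', which fails in every factor $\mathbf{M}_p'$ with $p\in P$. Your sentence $\exists x\,(x\ne 0\;\&\;x\ne 1\;\&\;x^{(n)}\approx x)$ can hold in factors with $p\mid n$, which is why you need the extra step of concentrating the ultrafilter on a single $p$.
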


\begin{proof}
For every $\emptyset\ne N\subseteq\{3,4,5,\dots\}$, let
$\mathcal{V}_N=\mathsf{V}(\mathbf{M}_n'\mid n\in N)$.
It suffices to show that the mapping 
\[
N\mapsto\mathcal{V}_N
\]
is injective.
Suppose that $\mathbf{M}_p'\in\mathcal{V}_N$ but $p\notin N$, for some integer $p\ge 3$. Then 
$\mathbf{M}_p'\in\mathsf{HSP_U}(\mathbf{M}_n'\mid n\in N)$, i.e., 
$\mathbf{M}_p'\in\mathsf{H}(\mathbf{K})$ for some 
$\mathbf{K}\in\mathsf{S}(\mathbf{L})$ where 
$\mathbf{L}\in\mathsf{P_U}(\mathbf{M}_n'\mid n\in N)$. 
Neither $\mathbf{K}$ nor $\mathbf{L}$ is a Boolean algebra. 

Since the lattices $\mathbf{M}_n$ are of length $2$ and width $\ge 3$ (both first-order properties), the same holds for the lattice reduct of the ultraproduct $\mathbf{L}$, which is therefore (isomorphic to) $\mathbf{M}_\lambda$ for some $\lambda\ge 3$. Similarly, since $\mathbf{K}$ is not a Boolean algebra, its lattice reduct is (isomorphic to) $\mathbf{M}_\kappa$ for some $\kappa\ge 3$.
Then $\mathbf{K}$ is a simple algebra, so $\mathbf{M}_p'\in\mathsf{H}(\mathbf{K})$ must be isomorphic to $\mathbf{K}\in\mathsf{S}(\mathbf{L})$. 
However, since $p\notin N$, no $\mathbf{M}_n'$ has a subalgebra isomorphic to $\mathbf{M}_p'$, and hence neither does $\mathbf{L}$ (not containing a fixed finite subalgebra is another first-order property). This is a contradiction.
\end{proof}

\section*{Appendix}

As promised in the remarks following Theorem~\ref{T:Mnaxio}, we now present a direct, though cumbersome, proof that in the axiomatization of $\mathsf{V}(\mathbf{M}_3')$ obtained by Theorem~\ref{T:Mnaxio}, the identity \eqref{T3} is superfluous, in accordance with Theorem~\ref{T:M3axio}.
Specifically, we show, without any reference to Theorem~\ref{T:M3axio}, that if $\mathbf{L}\in\mathcal{M}_{DM}$ satisfies \eqref{M3}, then $\mathbf{L}\in\mathcal{W}$ (Lemma~\ref{L:A1} below) and $\mathbf{L}$ satisfies the identity
\begin{equation}\label{E:XX}
(x\vee x'')''\approx x\vee x''
\end{equation}
(Lemma~\ref{L:A2}). In view of Lemma~\ref{L:neutral}, this ensures that every element of the form $\tau_3(a)=a\vee a''$ is neutral, and hence $\mathbf{L}$ satisfies the identity \eqref{T3}.

\begin{alemma}\label{L:A1}
If $\mathbf{L}\in\mathcal{M}_{DM}$ satisfies \eqref{M3}, then $\mathbf{L}\in\mathcal{W}_{DM}$.
\end{alemma}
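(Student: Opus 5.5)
The plan is to reduce \eqref{W} to the comparable case, where weak orthomodularity settles it, and to obtain the missing orthogonality relation from \eqref{M3}.

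\emph{Comparable case.} Suppose $a'\wedge b=0$, $a\wedge b'=0$ and $a\le b$. Since $\mathcal{M}\subseteq\mathcal{O}$, weak orthomodularity \eqref{woml} gives $b=a\vee(a'\wedge b)=a$. So \eqref{W} holds for comparable pairs.

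\emph{Reduction.} Let $a,b\in L$ with $a'\wedge b=0=a\wedge b'$, and put $c:=a\wedge b\le a$. We have $a'\wedge c\le a'\wedge b=0$. Suppose we can also show
\[
a\wedge c'=a\wedge(a'\vee b')=0 .
\]
Then the pair $c\le a$ satisfies the hypotheses of \eqref{W}, and the comparable case gives $c=a$. By the symmetric argument $c=b$, hence $a=b$. De Morgan's laws are used exactly to rewrite $c'=a'\vee b'$. Membership in $\mathcal{W}_{DM}$ then follows, since \eqref{DM} holds by assumption.

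\emph{Main obstacle: proving $a\wedge(a'\vee b')=0$.} Here \eqref{M3} must enter, since otherwise $\mathbf{H}_2$-like phenomena and $\mathbf{M}_n'$ with $n\ge4$ would be counterexamples. My first choice of substitution is $u:=a$, $x:=a'$, $y:=b'$, $z:=b$. This gives
\[
a\wedge(a\wedge b)'\wedge(a'\vee b)\wedge 1\le (a\wedge a')\vee(a\wedge b')\vee(a\wedge b)=a\wedge b .
\]
The left side is also below $(a\wedge b)'$, so $a\wedge(a\wedge b)'\wedge(a'\vee b)=0$. This still carries the extra factor $a'\vee b$, which must be removed.

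To remove it, I would use modularity. Set $e:=a\wedge c'$ and $f:=a\wedge(a'\vee b)$. Then $c\le f\le a$ and $e\wedge f=0$. By weak orthomodularity applied to $c\le a$, we have $e\vee c=a$, and hence $e\vee f=a$. So $e$ is a relative complement of $f$ in $[0,a]$, and it suffices to prove $f=a$, that is, $a\le a'\vee b$.

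For $a\le a'\vee b$, I would try a second instance of \eqref{M3} with $z$ replaced by $a''$ or $b''$, chosen so that one of the joins becomes $1$ by \eqref{DM}. For example, $a'\vee a''=(a\wedge a')'=1$. The aim is to force $a\wedge(a'\vee b)'=a\wedge a''\wedge b'=0$, which holds trivially since $a\wedge b'=0$, and then to conclude via dual weak orthomodularity \eqref{dwoml+} of Lemma~\ref{L:89}, applied to $a'\le a'\vee b$.

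If that route stalls, the fallback is to run the whole argument dually, reducing via $d:=a\vee b$ and dual weak orthomodularity, and to combine both partial equalities. I expect finding the right instance of \eqref{M3} that kills the extra factor to be the only genuinely nontrivial step; everything else is routine use of modularity, \eqref{woml}, \eqref{dwoml} and De Morgan's laws.
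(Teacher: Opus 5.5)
Your comparable case, your reduction via $c=a\wedge b$, and your first instance of \eqref{M3} are all correct; that instance, giving $a\wedge(a\wedge b)'\wedge(a'\vee b)=0$, is exactly the one the paper uses. But the proposal stops where the lemma's real content lies, and the remaining goal is not a simplification. From your own facts $c\le f$ and $f\wedge c'=e\wedge f=0$, weak orthomodularity \eqref{woml} already gives $f=c\vee(c'\wedge f)=c$, i.e.\ $a\wedge(a'\vee b)=a\wedge b$. Hence $a\le a'\vee b$ is equivalent to $a\le b$, which is already half of the conclusion.

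Moreover, the mechanism you sketch cannot work as described. The equation $a\wedge(a'\vee b)'=0$ you aim for holds automatically. More importantly, nothing you derive toward $a\wedge(a'\vee b')=0$ uses the hypothesis $a'\wedge b=0$: the estimate $a'\wedge c\le a'\wedge b$ is vacuous, since $c\le a$. Without that hypothesis the claim is false even in the target variety. In $\mathbf{M}_3'$ with $a=a_0$, $b=a_1$ we have $a\wedge b'=0$, $a'\le a'\vee b=a_1$, $a\wedge(a'\vee b)'=0$, and every instance of \eqref{M3} holds. Yet $a\wedge(a'\vee b')=a_0\neq 0$ and $a\not\le a'\vee b$. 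So an unspecified second instance of \eqref{M3} combined with \eqref{dwoml+} cannot close the gap unless $a'\wedge b=0$ is brought in.

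The missing idea is to apply the modular law to $b\le a'\vee b$, rather than trying to push $a$ below $a'\vee b$. Since $a\wedge(a'\vee b)=a\wedge b\le b$, modularity gives $b=b\vee\bigl(a\wedge(a'\vee b)\bigr)=(a\vee b)\wedge(a'\vee b)$. This is where $a'\wedge b=0$ enters: $0=a'\wedge b=a'\wedge(a\vee b)\wedge(a'\vee b)=a'\wedge(a\vee b)$. Then \eqref{woml} applied to $a\le a\vee b$ yields $a\vee b=a\vee\bigl(a'\wedge(a\vee b)\bigr)=a$, i.e.\ $b\le a$. Your comparable case, with the roles of $a$ and $b$ exchanged, then finishes with $a=b$. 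This is essentially the paper's proof, and it makes your reduction via $c$ unnecessary.
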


\begin{proof}
Let $\mathbf{L}\in\mathcal{M}_{DM}$ and suppose that $a+b=0$ for some $a,b\in L$, i.e., $a'\wedge b=0$ and $a\wedge b'=0$. 
By \eqref{M3} we have
\begin{align*}
a\wedge (a'\vee b)\wedge (a\wedge b)'
&= a\wedge (a'\vee b)\wedge (a'\vee b')\wedge (b\vee b')\\ 
&\le (a\wedge a')\vee (a\wedge b)\vee (a\wedge b')\\
&= a\wedge b,
\end{align*}
whence
$a\wedge (a'\vee b)\wedge (a\wedge b)'\le a\wedge b\wedge (a\wedge b)'=0$, and so
\[
a\wedge (a'\vee b)\wedge (a\wedge b)'=0.
\]
Then, repeatedly using modularity, we obtain
\begin{align*}
b &= b\vee \big(a\wedge (a'\vee b)\wedge (a\wedge b)'\big)\\
&= \Big(b\vee \big(a\wedge (a\wedge b)'\big)\Big)\wedge (a'\vee b)\\
&= \Big(b\vee (a\wedge b)\vee \big(a\wedge (a\wedge b)'\big)\Big)\wedge (a'\vee b)\\
&= (b\vee a)\wedge (a'\vee b).
\end{align*}
Now,
$0=a'\wedge b=a'\wedge (a\vee b)\wedge (a'\vee b)=
a'\wedge (a\vee b)$, which implies 
$a=a\vee (a'\wedge (a\vee b))=a\vee b$ by modularity.
Thus $a\ge b$. Symmetrically, we obtain $b\le a$.
\end{proof}

\begin{alemma}\label{L:A2}
If $\mathbf{L}\in\mathcal{M}_{DM}$ satisfies \eqref{M3}, then it satisfies \eqref{E:XX}.
\end{alemma}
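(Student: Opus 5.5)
By Lemma~\ref{L:A1} we already know $\mathbf{L}\in\mathcal{W}_{DM}$, so \eqref{equ1} is available, together with weak and dual weak orthomodularity (Lemma~\ref{lem3a}) and injectivity of complementation. Fix $a\in L$ and put $e:=a\vee a''$. The plan is to show $e+e''=0$; then $e=e''$ follows at once from \eqref{equ1}. By definition
\[
e+e''=(e'\wedge e'')\vee(e\wedge e''').
\]
The first joinand is $0$ because $e''$ is a complement of $e'$. So everything reduces to the single equation
\[
e\wedge e'''=0,\quad\text{i.e.}\quad (a\vee a'')\wedge(a'''\vee a^{(5)})=0,
\]
where the second form uses De Morgan's laws.

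To prove $e\wedge e'''=0$, I would first use modularity to split $e$ into pieces that interact well with the complementation chain $a,a',a'',a''',\dots$. Since $a''\le e$ and $a''\vee a'''=1$, modularity gives $e=a''\vee(e\wedge a''')$. Similarly, since $a''\le e$ and $a''\wedge a'''=0$, we get $e''' = a'''\vee(e'''\wedge a^{(4)})$ and related decompositions. Set $w:=e\wedge e'''$. Then $w$ lies below both $a\vee a''$ and $a'''\vee a^{(5)}$. The next step is to apply \eqref{M3} with $u:=w$ and three elements drawn from $a'',a''',a^{(4)}$ (or suitable meets of them with $e$ or $e'''$). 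The choice should make all three pairwise joins lie above $w$, while each meet $w\wedge(\cdot)$ is forced to vanish. For example, $w\wedge a'''\le e\wedge a'''$, and this can be compared with $e''\wedge a'''=0$ using $e=a''\vee(e\wedge a''')$ and weak orthomodularity. Once \eqref{M3} yields $w\le 0$, we get $e\wedge e'''=0$, hence $e+e''=0$ and $e''=e$, which is \eqref{E:XX}.

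The main obstacle is finding the right substitution in \eqref{M3}. The naive choices (such as $x=a$, $y=a'$, $z=a''$, or $z=w$ itself) make the inequality trivial, because one of the joins already equals $1$ and the right-hand side absorbs $w$. My first attempt would be $x=a''$, $y=a'''$, and $z=e\wedge e'''\wedge(a\vee a''')$-type elements obtained from the modular decompositions above. I expect to iterate between modular rewriting of $e$ and $e'''$ and the pairwise-join condition until each meet $w\wedge x$, $w\wedge y$, $w\wedge z$ is visibly $0$. For these vanishing meets I would rely on the complement relations $a^{(k)}\wedge a^{(k+1)}=0$, $a^{(k)}\vee a^{(k+1)}=1$ and on \eqref{dwoml+}.
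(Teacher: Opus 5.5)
Your reduction matches the paper's: with $e=a\vee a''$ and $w:=e\wedge e'''$ (the paper's $b$), it suffices by \eqref{equ1} to show $w=0$. After that, however, the proposal contains no argument. You say yourself that the right instance of \eqref{M3} has not been found, and the concrete hints are wrong:
\begin{itemize}
\item By De Morgan's laws $e'''=a'''\wedge a^{(5)}$, not $a'''\vee a^{(5)}$.
\item Your reformulation $(a\vee a'')\wedge(a'''\vee a^{(5)})=0$ is false: in $\mathbf{M}_3'$ with $a=a_0$ both factors equal $1$.
\item $e'''=a'''\vee(e'''\wedge a^{(4)})$ is false, since it would force $e'''\ge a'''$, whereas there $e'''=0\ne a_0=a'''$.
\item $e''\wedge a'''=0$ is false, since there $e''=1$.
\end{itemize}
Worse, since $w\le e'''\le a'''$, any instance of \eqref{M3} with $u=w$ and $y=a'''$ is vacuous, because $w\wedge a'''=w$ already appears on the right.

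The missing idea is that the substitution you discarded is the one that works, once you use $w\le a'''$. This gives $w\wedge a''=0$ and
\[
w\wedge a'\le(a\vee a'')\wedge a'\wedge a'''=(a\vee a'')\wedge(a\vee a'')'=0 .
\]
So \eqref{M3} with $u=w$, $x=a$, $y=a'$, $z=a''$ has left side $w\wedge(a\vee a'')=w$ and right side $w\wedge a$. Hence $w\le a$ and $w''\le a''$.

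Even this does not finish the proof; a second stage is needed.
\begin{itemize}
\item From $w\le e'''=(a''\vee a'''')'$ and $w''\le a''$ you get $(a''''\vee w'')\wedge w=0$.
\item Modularity then gives $a''''\wedge(w\vee w'')=a''''\wedge w''=(a''\wedge w)''=0$, and from this $(a''''\vee w)\wedge w''=0$.
\item Finally, \eqref{M3} with $u=w''$, $x=w'$, $y=a'''$, $z=a''''\vee w$ has left side $w''\wedge(w\wedge a'')'=w''$ and right side $(w''\wedge w')\vee(w\wedge a')''\vee\bigl(w''\wedge(a''''\vee w)\bigr)=0$.
\end{itemize}
Thus $w''=0$, and $w=0$ by injectivity of the complementation. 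Weak orthomodularity and \eqref{dwoml+} play no role.
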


\begin{proof}
Suppose that $\mathbf{L}\in\mathcal{M}_{DM}$ satisfies \eqref{M3}. Let $a\in L$. Since $\mathbf{L}\in\mathcal{W}$ by Lemma~\ref{L:A1}, it will suffice to show that
$(a\vee a'')'''\wedge (a\vee a'')=0$, 
because then $(a\vee a'')''+(a\vee a'')=0$ implies $(a\vee a'')''=a\vee a''$.
Put 
\[
b:=(a\vee a'')'''\wedge (a\vee a'')
=a'''\wedge a'''''\wedge (a\vee a'').
\]

First, we note that
\begin{equation}\label{E:Aa}
b\wedge a'=0,
\end{equation}
as $b\le (a\vee a'')\wedge a'''$ yields
$b\wedge a'\le (a\vee a'')\wedge a'''\wedge a'=(a\vee a'')\wedge (a''\vee a)'=0$.
Besides, it is obvious that
\begin{equation}\label{E:Ab}
b\wedge a''=0.
\end{equation}
We then have
\begin{align*}
b\wedge a 
&= (b\wedge a)\vee (b\wedge a')\vee (b\wedge a'')
&& \text{by \eqref{E:Aa} and \eqref{E:Ab}}\\
&\ge b\wedge (a\vee a')\wedge (a\vee a'')\wedge (a'\vee a'')
&& \text{by \eqref{M3}}\\
&= b\wedge (a\vee a'')=b, 
\end{align*}
and so
\begin{equation}\label{E:Ac}
b\le a \quad\text{and}\quad b''\le a''.
\end{equation}

From $b\le (a\vee a'')'''=(a''\vee a'''')'$ and \eqref{E:Ac} we obtain 
$b\wedge (b''\vee a'''')
\le (a''\vee a'''')'\wedge (a''\vee a'''')=0$,
thus
\begin{equation}\label{E:Ad}
(a''''\vee b'')\wedge b=0.
\end{equation}

Now, using modularity together with \eqref{E:Ad} and \eqref{E:Ab}, we get
\begin{align*}
a''''\wedge (b\vee b'')
&=a''''\wedge (a''''\vee b'')\wedge (b\vee b'')\\
&=a''''\wedge \Big(b''\vee \big((a''''\vee b'')\wedge b\big)\Big)\\
&=a''''\wedge (b''\vee 0) && \text{by \eqref{E:Ad}}\\
&=(a''\wedge b)''=0, && \text{by \eqref{E:Ab}}
\end{align*}
i.e.,
\begin{equation}\label{E:Ae}
a''''\wedge (b\vee b'')=0.
\end{equation}
Similarly, using modularity together with \eqref{E:Ad} and \eqref{E:Ae}, we get
\begin{align*}
(a''''\vee b)\wedge b''
&= (a''''\vee b)\wedge (b''\vee 0)\\
&= (a''''\vee b)\wedge \Big(b''\vee \big((a''''\vee b'')\wedge b\big)\Big) && \text{by \eqref{E:Ad}}\\
&= (a''''\vee b)\wedge (b''\vee b)\wedge (a''''\vee b'')\\
&= \Big(a''''\vee \big((a''''\vee b'')\wedge b\big)\Big)\wedge (b''\vee b) \\
&= (a''''\vee 0)\wedge (b''\vee b) && \text{by \eqref{E:Ad}}\\
&= a''''\wedge (b''\vee b)=0, && \text{by \eqref{E:Ae}}
\end{align*}
thus
\begin{equation}\label{E:Af}
(a''''\vee b)\wedge b''=0.
\end{equation}
Finally, we have
\begin{align*}
0 &= (b''\wedge b')\vee (b\wedge a')''\vee \big(b''\wedge (a''''\vee b)\big) && \text{by \eqref{E:Aa} and \eqref{E:Af}}\\
&= (b''\wedge b')\vee (b''\wedge a''')\vee \big(b''\wedge (a''''\vee b)\big)\\
&\ge b''\wedge (b'\vee a''')\wedge (b'\vee a''''\vee b)\wedge (a'''\vee a''''\vee b) && \text{by \eqref{M3}}\\
&=b''\wedge (b\wedge a'')'\\
&=b''\wedge 1=b'', && \text{by \eqref{E:Ab}}
\end{align*}
thus $b''=0$, whence $b=0$, as desired.
\end{proof}

As a final remark, \eqref{E:XX} can be regarded as an instance of
\begin{equation}\label{E:Atau}
(\tau_n(x))''\approx\tau_n(x).
\end{equation}
Although this identity holds in $\mathbf{M}_n'$ for every $n\ge 3$, it cannot replace \eqref{tn} in the axiomatization of $\mathsf{V}(\mathbf{M}_n')$ when $n\ge 4$. 
The reason is that, for $n\ge 4$, \eqref{mn} does not ensure that condition $a''=a$ is equivalent to $a$ being neutral (cf.\ Lemma~\ref{L:neutral}). 
Thus, \eqref{E:Atau} together with \eqref{mn} does not guarantee that elements of the form $\tau_n(a)$ are neutral, which is precisely the role of \eqref{tn}.

\section*{Funding}

This work has been supported by the bilateral project ``Orthogonality and Symmetry'' 
by the Austrian Science Fund (FWF): 
No. 10.55776/PIN5424624 (H.~L\"anger), 
and by the Czech Science Foundation (GA\v CR): 
No. 25-20013L (V.~Cenker, I.~Chajda and J.~K\"uhr).

\end{document}